\def\SH{\mbox{\fontencoding{OT2}\selectfont\char88}}
\def\Z{{\mathbb Z}}
\def\GL{{\rm GL}}
\def\PGL{{\rm PGL}}
\def\Stab{{\rm Stab}}
\def\Sym{{\rm Sym}}
\def\SO{{\rm SO}}
\def\P{{\mathbb P}}
\def\Aut{{\rm Aut}}
\def\irr{{\rm irr}}
\def\Inv{{\rm Inv}}
\def\red{{\rm red}}
\def\Vol{{\rm Vol}}
\def\R{{\mathbb R}}
\def\F{{\mathbb F}}
\def\FF{{\mathcal F}}
\def\Q{{\mathbb Q}}
\def\nv{{V_\Z^{(1)}}}
\def\pnv{{V_\Z^{(i)}}}
\def\prs{{V_\R^{(0\#)}}}
\def\tr{{V_\R^{(2)}}}
\def\pnr{{V_\R^{(i)}}}
\def\H{{\mathcal H}}
\def\J{{\mathcal J}}
\def\C{{\mathcal C}}
\def\W{{\mathcal W}}
\def\Z{{\mathbb Z}}
\def\P{{\mathbb P}}
\def\F{{\mathbb F}}
\def\Q{{\mathbb Q}}
\def\C{{\mathbb C}}
\def\H{{\mathcal H}}
\newtheorem{theorem}{Theorem}
\newtheorem{corollary}[theorem]{Corollary}
\newtheorem{conjecture}[theorem]{Conjecture}
\newtheorem{lemma}[theorem]{Lemma}
\newtheorem{proposition}[theorem]{Proposition}
\newenvironment{proof}{\noindent {\bf Proof:}}{$\Box$ \vspace{2 ex}}
\title{The average number of elements in the $4$-Selmer groups \\of
  elliptic curves is 7}
\author{Manjul Bhargava and Arul Shankar}
\begin{document}
\maketitle


\section{Introduction}

Any elliptic curve $E$ over $\Q$ is isomorphic to a unique curve of
the form $E_{A,B}:y^2=x^3+Ax+B$, where $A,B \in \Z$ and for all primes
$p$:\, $p^6 \nmid B$ whenever $p^4 \mid A$. Let $H(E_{A,B})$ denote
the (naive) $height$ of $E_{A,B}$, defined by $H(E_{A,B}):= \max
\{4|A^3|,27B^2\}$.  

In previous papers (\cite{BS} and \cite{TC}), we showed that the
average size of the 2-Selmer group of all elliptic curves over $\Q$, when
ordered by height, is 3; meanwhile the average size of the 3-Selmer group
is~4.
The purpose of this article is to prove an analogous result for the
average size of the $4$-Selmer group of all elliptic
curves over $\Q$. Specifically, we prove the following theorem:

\begin{theorem}\label{mainellip}
When all elliptic curves $E/\Q$ are ordered by height,
the average size of the $4$-Selmer group $S_{4}(E)$ is equal to $7$.
\end{theorem}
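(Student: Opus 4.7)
The plan is to extend the geometry-of-numbers strategy of \cite{BS} and \cite{TC} to the case $n=4$, using the coregular representation $W = \Sym^2 \Z^4 \otimes \Z^2$ of pairs of $4 \times 4$ symmetric integer matrices under the natural action of a group $G$ isogenous to $\SL_4 \times \SL_2$. The ring of polynomial invariants on $W$ is generated by two elements $I, J$, extracted from the $\SL_4$-invariant binary quartic resolvent $\det(xA + yB)$, normalized so that a generic pair $(A, B) \in W$ cuts out a smooth genus-one curve $C_{(A,B)} \subset \P^3$ with $\Jac(C_{(A,B)}) \cong E_{I(A,B),J(A,B)}$. The essential geometric input is that every 4-cover of an elliptic curve $E/\Q$ admits a projective embedding as a complete intersection of two quadrics in $\P^3$.

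The first main step is a parametrization theorem: non-identity elements of $S_4(E_{A,B})$ are in canonical bijection with $G(\Q)$-orbits on $W_\Q$ with invariants $(A,B)$ that are everywhere locally soluble, and each such orbit has a representative lying in $W_\Z$. This reduces the computation of the average of $|S_4(E)|$ to the problem of counting integral $G(\Z)$-orbits on $W_\Z$ subject to an infinite family of local conditions. The second step is the orbit count itself: construct a fundamental domain $\FF$ for $G(\Z) \backslash G(\R)$, smear it by a compact subset of $G(\R)$, and estimate the resulting integer-point count by the volume of a slice via geometry of numbers. One must separately identify the reducible and distinguished orbits so that only the irreducible, non-trivial Selmer classes remain in the main term. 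The third step is a uniform local sieve: the $p$-adic density of locally soluble orbits at each prime is computed and assembled into a product; together with the identity 4-Selmer class, the result will be $\sigma(4) = 7$.

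The main obstacle, as compared with the 2- and 3-Selmer cases, is the cusp analysis for the 20-dimensional representation $W_\R$, where $\dim G = 19$. The cusp of $\FF$ admits a stratification indexed by parabolic subgroups of $\SL_4$ and $\SL_2$, and one must show that each cuspidal stratum either consists of reducible/distinguished orbits (corresponding to factorizations of $(A, B)$ that make the associated 4-cover trivial in $S_4$) or contributes a lower-order count. Because the rank is larger than in the earlier cases, the stratification is more intricate and requires an inductive analysis of each cuspidal piece via auxiliary representations of smaller groups, together with a careful bookkeeping of which strata must be subtracted to isolate the genuinely non-trivial Selmer classes. The other principal technical hurdle is establishing a uniform square-free sieve: one must bound the number of integer orbits of bounded height whose invariants are divisible by $p^2$ for primes $p > X^\epsilon$, which is handled by a geometric sieve argument adapted to $W$, ensuring that the product of local densities can legitimately be applied on the nose to yield the exact average of $7$.
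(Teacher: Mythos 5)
Your proposal correctly identifies the representation (pairs of quaternary quadratic forms under a group isogenous to $\GL_2 \times \GL_4$), the parametrization of 4-Selmer elements via locally soluble orbits, the geometry-of-numbers framework with cusp truncation, and the need for a squarefree sieve. However, there is a genuine gap in the accounting of what the main count actually yields and how the answer $7$ is assembled.

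You write that the orbit count should isolate ``the irreducible, non-trivial Selmer classes,'' and that adding back ``the identity 4-Selmer class'' gives $\sigma(4) = 7$. This suggests you expect the main-body count to produce $6$ on average. But the orbits in the main body of the fundamental domain---those whose binary quartic resolvent has no rational linear factor---correspond not to all nontrivial Selmer classes but precisely to the Selmer elements of \emph{exact order 4}, and the geometry-of-numbers computation (after the local density product cancels the Jacobian factor $\J$) produces the Tamagawa number $\tau(\PGL_4(\Q)) = 4$, not $6$. The orbits you propose to discard---those whose resolvent quartic has a rational linear factor, which live in the cusps---correspond to \emph{all} Selmer elements of order dividing $2$, not merely the identity. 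Their average contribution is $3$, not $1$. So adding one back for the identity yields $5$, not $7$.

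The missing ingredient is that the order-$\leq 2$ part of $S_4(E)$ must be handled by a separate argument, not by the pairs-of-quadrics count at all. The paper uses the exact sequence $0 \to E[2] \to E[4] \to E[2] \to 0$ to show (for curves with no rational $2$-torsion, which are $100\%$ of curves by height) that $S_2(E)$ injects into $S_4(E)$ as the subgroup of order-$\leq 2$ elements, so $\#S_4(E) = \#\{\sigma : \sigma^2 \neq 1\} + \#S_2(E)$. The first term averages to $4$ by the new orbit count; the second averages to $3$ by the already-proved $2$-Selmer result of \cite{BS}. The total $4 + 3 = 7$ emerges only from this two-piece decomposition, and without invoking the prior $2$-Selmer average your strategy cannot recover the factor of $3$---trying to extract it from the cuspidal strata directly would amount to reproving that result inside a much harder cusp region. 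This decomposition is also exactly what motivates the general conjecture $\sigma(n) = \sum_{d \mid n} d$: each divisor $d$ contributes $\tau(\PGL_d) = d$ from its own stratum of exact-order-$d$ elements.
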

We will in fact prove a stronger version of Theorem \ref{mainellip}
where we compute the average size of the $4$-Selmer group of elliptic
curves satisfying any finite set of congruence conditions:

\begin{theorem}\label{ellipcong}
  When elliptic curves $E:y^2=x^3+Ax+B$ over $\Q$, in any family
  defined by finitely many congruence conditions on the coefficients
  $A$ and $B$, are ordered by height, the average size of the
  $4$-Selmer group $S_4(E)$ is~$7$.
\end{theorem}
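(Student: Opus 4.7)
The plan is to follow the strategy of \cite{BS} and \cite{TC}, adapted to the case $n=4$. The first step is to establish a parametrization of $4$-Selmer elements via integer orbits of an algebraic group $G$ acting on a representation $V$, with polynomial invariants recovering the pair $(A,B)$ defining the elliptic curve. For $4$-Selmer, the natural choice is $V = $ the space of pairs of $4\times 4$ symmetric matrices, with $G$ an appropriate subgroup of $\GL_2\times\GL_4$, so that the ring of polynomial invariants is generated by the coefficients of the binary quartic resolvent $\det(Ax-By)$. A locally soluble rational orbit on $V$ with prescribed invariants corresponds to a $4$-covering of $E_{A,B}$, i.e.\ an element of $S_4(E_{A,B})$; this is the classical theory of pairs of quadrics in $\mathbb{P}^3$ cutting out a genus-one curve of degree $4$.

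With the parametrization in place, the average size of $S_4(E)$ over curves of height $<X$ reduces, after a stabilizer-weighted count, to counting $G(\Z)$-equivalence classes of integer points in $V_\Z$ with bounded invariants, restricted to locally soluble classes. To carry out this count I would construct a fundamental domain for $G(\Z)$ acting on $V_\R$ by averaging translates of a Siegel set against a compact region in $G(\R)$ and count lattice points in the averaged domain. The count splits into a contribution from the ``main body'' (yielding the expected main term via a volume asymptotic) and a contribution from the cusp, whose integer points must be shown to consist entirely of ``distinguished'' orbits: those arising from the identity $4$-covering, from $E(\Q)[2]$ via the descent map, and from $2$-descent lifted through the natural connecting map $S_2(E)\to S_4(E)$.

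Next I would sieve from all integral orbits down to the locally soluble ones. This requires a uniformity estimate bounding the number of $G(\Z)$-orbits of bounded invariant supported $p$-adically in a proper subvariety of $V$, uniformly in $p$. Once uniformity is in hand, the sieve output is a product of local densities: an archimedean density obtained by summing volumes of locally soluble components of $V_\R$, multiplied by a product over finite primes of $p$-adic densities computed via Jacobian change of variables on $G\times V$. Imposing arbitrary congruence conditions on $(A,B)$, as required by Theorem \ref{ellipcong}, only alters finitely many local factors in this product, and the uniformity estimate guarantees that the tail of the sieve remains negligible uniformly in the congruence family.

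The most delicate obstacle is the cusp analysis together with the precise counting of distinguished orbit contributions. Unlike in the prime cases $n=2,3$, for $n=4$ the Selmer group sits in a two-step filtration inherited from $E(\Q)/2E(\Q)\subset E(\Q)/4E(\Q)$, so distinguished orbits split into several strata instead of one. Separating and computing the contribution of each stratum in the cuspidal region, and verifying that they combine with the main body count to produce exactly $\sigma(4)=1+2+4=7$ rather than, say, $6$, is the principal technical novelty beyond \cite{BS} and \cite{TC}, and is where I expect the bulk of the effort to lie.
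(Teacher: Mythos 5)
Your overall strategy matches the paper: parametrize $4$-Selmer elements by $G_\Z$-orbits on pairs of quaternary quadratic forms (via Cremona--Fisher--Stoll), average Siegel-set fundamental domains over a compact subset of $G_\R$, cut off the cusps, apply Davenport's lemma in the main body to get a volume asymptotic, run a uniformity estimate and squarefree sieve, compute local densities by Jacobian change of variables, and identify the main-body contribution as the Tamagawa number $\tau(\PGL_4)=4$. Imposing congruence conditions on $(A,B)$ is handled exactly as you say, by altering finitely many local factors.

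The genuine departure from the paper is in how the remaining $\sigma(4)-4=3$ is obtained. You propose to stratify the ``distinguished'' orbits in the cuspidal region (identity, $E(\Q)[2]$, lifts from $2$-descent) and compute the contribution of each stratum directly, analogously to how one extracts the trivial class from the cusp when $n$ is prime. The paper avoids this entirely. It restricts the entire count to \emph{strongly irreducible} orbits --- those whose binary quartic resolvent $16\det(Ax+By)$ has no rational linear factor --- which by Propositions~\ref{propparamfield}--\ref{propselparz} correspond precisely to $4$-Selmer elements of exact order $4$. In the main body this exclusion is harmless (Lemma~\ref{lemfewred}), and in the cusp only strongly irreducible points need bounding (Lemma~\ref{lemtable}); the non-strongly-irreducible locally soluble orbits are never counted in this representation at all. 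The contribution from Selmer elements of order $<4$ is instead recovered by a soft group-theoretic observation: for $E$ with trivial rational $2$-torsion (a density-one condition, with the exceptional curves contributing negligibly by Proposition~\ref{propstrongnottot}), the exact sequence $0\to E[2]\to E[4]\to E[2]\to 0$ forces $S_2(E)\hookrightarrow S_4(E)$, so $\#S_4(E)=\#\{\sigma\in S_4(E):\sigma^2\neq 1\}+\#S_2(E)$, and one imports $\mathrm{avg}\,\#S_2(E)=3$ from~\cite{BS} as a black box. Your cusp-stratification route would have to establish that every non-strongly-irreducible locally soluble orbit sits in the cusp and then compute its weighted count --- a considerably heavier analysis than the paper needs, and it is not clear it can be carried out cleanly, since reducible orbits form positive-volume $G_\Z$-invariant loci that are not confined to the cuspidal region. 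The $S_2\hookrightarrow S_4$ injection is the missing idea that lets the paper sidestep this.

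A minor point: the invariant ring is generated not by the coefficients of the binary quartic resolvent (which is only a covariant, transforming by the $\GL_2$-twist in~(\ref{twistdef})) but by its invariants $I$ and $J$.
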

We will also prove an analogue of Theorem~\ref{ellipcong} for certain
families of elliptic curves defined by infinitely many congruence
conditions (e.g., the family of all semistable elliptic curves). 

Since we have shown in \cite{BS} that the average number of elements in
the 2-Selmer groups of elliptic curves over $\Q$ is 3,
we may use Theorem~\ref{mainellip} to prove that a positive proportion of 2-Selmer
elements of
elliptic curves do {\it not} lift to 4-Selmer elements:

\begin{theorem}\label{4lift}
For an elliptic curve $E$ over $\Q$, let $\times2:S_4(E)\to S_2(E)$ denote
the multiplication-by-$2$ map. Then, when elliptic curves $E$ over $\Q$ are
ordered by height, the average number elements in the $2$-Selmer group of $E$
that have no preimage under $\times2$ is at least $3/5>0$.
\end{theorem}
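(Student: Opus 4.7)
The idea is to combine Theorem~\ref{mainellip} with $\mathrm{Avg}(|S_2(E)|) = 3$ from \cite{BS} via a sharp pointwise inequality relating $|S_2(E)|$, $|S_4(E)|$, and the size of the image of $\times 2$.

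First I would analyze the sequence $S_2(E) \to S_4(E) \xrightarrow{\times 2} S_2(E)$ coming from the short exact sequence $0 \to E[2] \to E[4] \xrightarrow{\times 2} E[2] \to 0$. A local computation with Kummer maps at each place of $\Q$ shows the sequence is exact in the middle, and that the kernel of $S_2(E) \to S_4(E)$ is precisely $E(\Q)[2]/2E(\Q)[4]$. Since density one of elliptic curves satisfy $E(\Q)[2] = 0$, I may restrict to this generic subfamily; on it,
$$\bigl|\mathrm{image}(\times 2)\bigr| \;=\; \frac{|S_4(E)|}{|S_2(E)|},$$
and both $|S_2(E)| \mid |S_4(E)|$ and $|S_4(E)| \leq |S_2(E)|^2$ hold (the latter because the image is a subgroup of $S_2(E)$).

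The main step is then the pointwise lower bound
$$|S_2(E)| \,-\, \frac{|S_4(E)|}{|S_2(E)|} \;\geq\; \frac{6\,|S_2(E)| - |S_4(E)| - 8}{5},$$
which after multiplying through by $5\,|S_2(E)|$ is equivalent to $|S_4(E)|\,(|S_2(E)|-5) \geq |S_2(E)|\,(|S_2(E)|-8)$. This follows from a short case analysis split on $|S_2(E)| \in \{1,2,4\}$ versus $|S_2(E)| \geq 8$: in the first regime one uses $|S_4(E)| \leq |S_2(E)|^2$ directly, and in the second one uses $|S_2(E)| \leq |S_4(E)|$ together with the trivial $|S_2(E)| - 5 \geq |S_2(E)| - 8$.

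Averaging this inequality over all elliptic curves $E/\Q$ ordered by height and substituting $\mathrm{Avg}(|S_2(E)|) = 3$ and $\mathrm{Avg}(|S_4(E)|) = 7$ then yields
$$\mathrm{Avg}\bigl(|S_2(E)| - |\mathrm{image}(\times 2)|\bigr) \;\geq\; \frac{6 \cdot 3 - 7 - 8}{5} \;=\; \frac{3}{5}.$$
The one non-routine step is identifying this linear inequality; its coefficients $5,6,8$ arise as the dual optimum of a linear program bounding $\mathrm{Avg}(|S_4|/|S_2|)$ over all joint distributions of $(|S_2(E)|, |S_4(E)|)$ subject to the given first moments and the divisibility constraint $|S_2(E)| \mid |S_4(E)| \leq |S_2(E)|^2$. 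The dual value $12/5$ is asymptotically attained (but not achieved) by a family of distributions placing mass on $(|S_2|, |S_4|) = (2,4)$ and $(4,16)$ together with a vanishing mass at $(2^k, 2^k)$ as $k \to \infty$, which shows the bound $3/5$ is tight.
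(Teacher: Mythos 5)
Your proof is correct and takes essentially the same approach as the paper. Writing $S_4(E)\cong(\Z/4\Z)^a\times(\Z/2\Z)^b$ for $E$ with trivial rational $2$-torsion, so that $|S_2|=2^{a+b}$ and the image of $\times 2$ has size $2^a$, the paper uses the pointwise bound $5\cdot 2^a - 8 \leq 4^a - 2^a$ together with $2^b\ge 1$ to deduce $\mathrm{Avg}(2^a)\le 12/5$ and hence $\mathrm{Avg}(2^{a+b}-2^a)\ge 3-12/5=3/5$; your inequality $|S_4|(|S_2|-5)\ge|S_2|(|S_2|-8)$ unwinds to $(4^a-2^a)2^b\ge 5\cdot 2^a-8$, which is exactly that same inequality with the factor $2^b\ge 1$ already absorbed, with the same coefficients $5,8$ and the same extremal cases $a\in\{1,2\}$, $b=0$.
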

It follows, in particular, that a positive proportion (in fact, at
least one fifth) of all 2-Selmer
elements of elliptic curves $E$ over $\Q$, when such $E$ are
ordered by height, correspond to nontrivial 2-torsion elements of the
Tate--Shafarevich group \SH$_E$ of $E$.  Another consequence is that
there exist infinitely many elliptic curves $E$ over $\Q$ with trivial
rational 2-torsion for which the $2$-primary part of the group
{\SH}$_E$ contains 
$\Z/2\Z$ as a factor.

As we will explain, Theorems~\ref{mainellip} and \ref{ellipcong}, and
the methods of their proofs, lead naturally to the following
conjecture on the average size of the $n$-Selmer group of elliptic
curves for general $n$:

\begin{conjecture}\label{sigmaconj}
  Let $n$ be any positive integer.  Then, when all elliptic curves $E$
  are ordered by height, the average size of the $n$-Selmer group
  $S_n(E)$ is $\sigma(n)$, the sum of the divisors of~$n$.
\end{conjecture}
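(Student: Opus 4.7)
\noindent\textbf{Proof proposal for Conjecture~\ref{sigmaconj}.}
The plan is to extend the representation-theoretic parametrization of $n$-Selmer elements of elliptic curves that underlies the proofs for $n\in\{2,3,4\}$. For each $n$, one seeks a coregular pair $(G_n,V_n)$, together with an integral model $V_n(\Z)$, such that $G_n(\Z)$-orbits on $V_n(\Z)$ parametrize non-identity elements of the $n$-Selmer groups of the curves $E_{A,B}$, with $A$ and $B$ arising as polynomial invariants of the $G_n$-action.  For $n\le 5$ the relevant representations are classical and correspond to moduli of genus-one normal curves of degree $n$ in $\P^{n-1}$ (binary quartics, ternary cubics, pairs of quaternary quadrics, quintuples of $5\times 5$ skew-symmetric matrices).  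The first step of any proof is therefore to identify workable analogues for $n\ge 6$.

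A structural approach to the identity $\sigma(n)=\sum_{d\mid n}d$ is to stratify $S_n(E)$ by divisors of $n$: for each $d\mid n$, consider the $n$-Selmer classes killed by multiplication by $n/d$ but not by multiplication by any proper divisor thereof.  By M\"obius inversion across the divisor lattice, Conjecture~\ref{sigmaconj} reduces to showing that the $d$-th stratum contributes average exactly $d$.  This decomposition is consistent with the known cases: for $n\in\{2,3\}$ (both prime) the only strata are the trivial one and the top one, summing to $1+n=\sigma(n)$; for $n=4$, Theorems~\ref{mainellip} and~\ref{4lift} imply that, beyond the $2$-Selmer subgroup of average $3=1+2$, the genuinely order-$4$ classes contribute a further average of $4$, totalling $7$.

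The core technical work, for each $d\mid n$, is then to: (a) construct a coregular representation whose integral orbits parametrize the $d$-th stratum; (b) apply the Bhargava--Shankar orbit-counting framework---building a fundamental domain for $G_n(\Z)$ acting on $V_n(\R)$, averaging over translates to control the cusp, counting bounded-height lattice points by the geometry of numbers, and sieving to impose everywhere-local solubility as in \cite{BS, TC}; and (c) evaluate the resulting Tamagawa product of local densities and verify that it equals $d$ exactly, so that summation over $d\mid n$ yields $\sigma(n)$.

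The principal obstacle is the representation-theoretic input for $n\ge 6$.  No coregular parametrization of moduli of genus-one models of degree $\ge 6$ is known, and general obstructions may preclude one in classical form, so some genuinely new framework---semi-invariants, stacky quotients, or more refined integral moduli---would likely be required.  A secondary but formidable difficulty is the cusp estimate: as $\dim V_n$ grows, reducible integral orbits (which do not correspond to genuine Selmer elements) dominate the count, and controlling their contribution uniformly in $n$ is delicate; the averaging trick of \cite{BS} would need substantial refinement to handle all $n$ simultaneously.  Finally, aligning the sieve computations across all divisors $d\mid n$ so that the local densities sum cleanly to $\sigma(n)$ demands uniform Tamagawa-measure calculations at every place, which appear tractable in principle but have so far been carried out only for very small $n$.
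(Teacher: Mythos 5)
The statement you are addressing is a \emph{conjecture}, and the paper offers no proof of it. What the paper does offer, in the introduction, is a heuristic: the average number of exact-order-$n$ elements of $S_n(E)$ is expected to equal $n$, the Tamagawa number of $\PGL_n(\Q)$ (this is what the main-body count gives in the proven cases $n\le 4$), and the remaining elements have order a proper divisor $d\mid n$ and should be governed by the same heuristic for $S_d$; summing over divisors gives $\sigma(n)$. Your proposal reproduces this stratification-by-divisors heuristic and correctly identifies the central obstruction for $n\ge 6$ --- the absence of a usable coregular parametrization of genus-one models of degree $\ge 6$ --- so you are in essential agreement with the paper. You are also right to present this only as a strategy and not a proof: no proof exists, and your outline does not change that.

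Two precision points. First, your stratification is misindexed: as written, the $d$-th stratum consists of classes of exact order $n/d$, so the identity (your $d=n$) would need to contribute average $1$, not $n$; you want to index directly by exact order $e\mid n$ and demand that the exact-order-$e$ stratum contribute average $e$, as the paper's heuristic says. Second, passing from ``exact-order-$e$ elements of $S_n(E)$'' to ``a copy of $S_e(E)$'' requires the injectivity of $H^1(\Q,E[e])\to H^1(\Q,E[n])$, which the paper handles for $e=2$, $n=4$ by restricting to curves with trivial rational $2$-torsion (a density-one subfamily); any serious attempt at general $n$ has to confront this at every divisor. Neither of these is a gap in the paper (since the paper proves nothing here), but both are gaps in the precision of your sketch.
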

Thus the conjecture is proven for $n=2$, $n=3$, and $n=4$ (and also
for $n=1$!).  We will prove Conjecture~\ref{sigmaconj} for $n=5$ in
\cite{5sel}.  This paper represents the first time that the average
size of the $n$-Selmer group has been determined for a {composite
  value of $n$.

Conjecture~\ref{sigmaconj} also has consequences
  for the distribution of ranks of elliptic curves.  Since $\epsilon
  n^2$ grows faster than $\sigma(n)$, as a function of $n$, for any
  $\epsilon>0$, we obtain:

\begin{proposition}\label{rankdist}
  Suppose that Conjecture~$\ref{sigmaconj}$ is true for all $n$, or
  indeed, any infinite sequence of positive integers~$n$. Then
when all
  elliptic curves over $\Q$ are ordered by height, 
a density of $100\%$ have rank $\leq 1$. 
\end{proposition}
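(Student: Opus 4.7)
The plan is to combine the standard Kummer-theoretic lower bound on $|S_n(E)|$ in terms of the Mordell--Weil rank with the elementary estimate $\sigma(n) = o(n^2)$ that is foreshadowed in the statement of the proposition.

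The first step will be to recall that for any elliptic curve $E/\Q$ and any positive integer $n$, the defining exact sequence
\begin{equation*}
0 \longrightarrow E(\Q)/nE(\Q) \longrightarrow S_n(E) \longrightarrow \SH(E)[n] \longrightarrow 0,
\end{equation*}
together with the Mordell--Weil structure theorem $E(\Q) \cong \Z^r \oplus E(\Q)_{\rm tors}$, yields $|S_n(E)| \geq |E(\Q)/nE(\Q)| \geq n^r$, where $r = {\rm rank}\,E(\Q)$. In particular, every elliptic curve of rank at least $2$ contributes at least $n^2$ to the sum $\sum_{H(E)\leq X}|S_n(E)|$.

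Next I will translate this into a density bound. Let $\delta^+$ denote the upper density, with respect to the height ordering, of elliptic curves $E/\Q$ with ${\rm rank}\,E(\Q) \geq 2$. Bounding $|S_n(E)|$ below by $0$ for curves of rank at most $1$ and by $n^2$ for curves of rank at least $2$, then taking $X \to \infty$ and applying Conjecture~\ref{sigmaconj} (assumed to hold for the given infinite set of $n$), I will deduce
\begin{equation*}
\sigma(n) \;\geq\; \delta^+ \cdot n^2, \qquad \mbox{equivalently} \qquad \delta^+ \;\leq\; \sigma(n)/n^2.
\end{equation*}

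To conclude, I will invoke the elementary bound $\sigma(n) \leq n\cdot d(n) = n^{1+o(1)}$, which forces $\sigma(n)/n^2 \to 0$ as $n \to \infty$; letting $n$ range over the assumed infinite sequence then forces $\delta^+ = 0$, so that a density of $100\%$ of elliptic curves have rank at most $1$. There is no genuine obstacle in this argument: it is essentially a one-line Markov-type comparison, and the role of the hypothesis that Conjecture~\ref{sigmaconj} holds for infinitely many $n$ is precisely to drive the ratio $\sigma(n)/n^2$ to zero.
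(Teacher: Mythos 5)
Your proof is correct and is exactly the argument the paper intends: the paper gives only the one-line hint ``Since $\epsilon n^2$ grows faster than $\sigma(n)$,'' and your write-up, using the Kummer sequence lower bound $|S_n(E)|\geq n^{\mathrm{rank}\,E(\Q)}$ together with a Markov-type comparison against the assumed average $\sigma(n)$, is precisely the intended filling-in of that hint. No gaps.
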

The parity conjecture states that an elliptic curve has even
rank if and only if its root number is~$1$. Hence the above proposition has
the following consequence:
\begin{corollary}
  Suppose that Conjecture~$\ref{sigmaconj}$ is true for all $n$, or
  any infinite sequence of positive integers~$n$. Further assume that
  the root numbers of elliptic curves are equidistributed and that the
  parity conjecture holds. Then when elliptic curves are ordered by
  height, $50\%$ have rank $0$ and $50\%$ have rank $1$.
\end{corollary}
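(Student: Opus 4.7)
The statement is an immediate combination of three ingredients, so the plan is purely combinatorial once the hypotheses are put side by side.

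First I would invoke Proposition~\ref{rankdist}: under the assumption that Conjecture~\ref{sigmaconj} holds along an infinite sequence of $n$, the density of elliptic curves $E/\Q$ (ordered by height) with $\operatorname{rank}(E)\ge 2$ is zero. Thus, up to a set of density zero, every elliptic curve has rank $0$ or rank $1$.

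Next I would use the assumed equidistribution of root numbers to partition the family of elliptic curves (ordered by height) into two subfamilies of density $1/2$ each: those with root number $+1$ and those with root number $-1$. By the parity conjecture, a curve has even rank exactly when its root number is $+1$ and odd rank exactly when its root number is $-1$. Combined with the previous step, curves with root number $+1$ have rank in $\{0,2,4,\ldots\}\cap\{0,1\}=\{0\}$ outside a set of density zero, and curves with root number $-1$ have rank in $\{1,3,5,\ldots\}\cap\{0,1\}=\{1\}$ outside a set of density zero. Hence $50\%$ of elliptic curves, ordered by height, have rank~$0$ and $50\%$ have rank~$1$.

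Since all the hard work is packed into the three hypotheses (the Selmer conjecture, equidistribution of root numbers, and parity), there is no real obstacle in the proof itself; the only care needed is to observe that density-zero exceptional sets in each step can simply be absorbed together when one intersects with the density-$\tfrac12$ root number classes. I would therefore present this as a brief two-line deduction rather than an independent argument.
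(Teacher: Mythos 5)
Your argument is exactly the one the paper intends: Proposition~\ref{rankdist} reduces to ranks $0$ and $1$ outside a density-zero set, and the root-number equidistribution together with the parity conjecture splits the remainder evenly. The paper states the corollary without a separate proof precisely because this two-step deduction is immediate, and your write-up matches it.
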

Thus our results on Selmer groups above give independent theoretical
evidence for the elliptic curve rank distribution conjecture, due to
Goldfeld~\cite{G1} and Katz--Sarnak~\cite{KS} (see also~\cite{BMSW}
for a nice survey), which states that 50\% of all elliptic curves
have rank 0 and 50\% rank 1.

Our method for proving Theorem~1 is as follows.  We view $n$-Selmer
elements of an elliptic curve $E$ as locally soluble $n$-coverings of
$E$.  Here, an {\it $n$-covering of $E$} is a genus one curve $C/\Q$
together with maps $\phi:C\to E$ and $\theta:C\to E$, where $\phi$ is
an isomorphism defined over $\C$, and $\theta$ is a degree $n^2$ map
defined over $\Q$, such that the following diagram commutes:
$$\xymatrix{E \ar[r]^{[n]} &E\\C\ar[u]^\phi\ar[ur]_\theta}$$ An
$n$-covering $C$ is said to be {\it locally soluble} if $C$ has points
defined over $\R$ and over $\Q_p$ for all primes $p$.  Cassels
\cite{Cassels} proved that any locally soluble $n$-covering has a
degree $n$ divisor defined over $\Q$, yielding an embedding of $C$
into $\P^{n-1}$ defined over $\Q$. We may thus represent $n$-Selmer
elements of elliptic curves as genus one normal curves in $\P^{n-1}$.  When
$n=4$, as is well known, any such genus one curve in $\P^{n-1}=\P^3$
arises naturally as the complete intersection of a pair of quadrics in
$\P^3$, where the two quadrics are well-defined up to appropriate 
changes-of-basis. 
Indeed, it turns out that 4-Selmer elements of an
elliptic curve $E_{A,B}$ over $\Q$ may naturally be viewed in terms of 
the ``locally soluble'' orbits of $G_\Q$ on $V_\Q$, where $G$ is the 
algebraic group such that
$$G_R:=\{(g_2,g_4)\in\GL_2(R)\times\GL_4(R):\det(g_2)\det(g_4)=1\}/
\{(\lambda^{-2}I_2,\lambda I_4):\lambda\in R^\times\}$$ for all rings
$R$, and $V$ is the representation $2\otimes \Sym^2(4)$ of pairs of
quadrics (see~\cite[\S4.3]{BhHo} for the reasons behind this choice of group $G_R$).  
The invariant ring for the
representation of $G_\C$ on $V_\C$ turns out to be freely generated by
two invariants, which naturally correspond to the invariants~$A$
and~$B$ \pagebreak of the
Jacobian elliptic curve $E_{A,B}$ of the associated genus one curve in~$\P^3$.  These
classical connections among orbits on pairs of quadrics, genus one
normal curves in~$\P^3$, and explicit 4-descent on elliptic curves
over global fields were fully developed in recent years in a series of
beautiful works by An, Kim, Marshall, Marshall, McCallum, and
Perlis~\cite{MMM}, Siksek~\cite{SikThesis}, Merriman, Siksek, and
Smart~\cite{MSS}, Womack~\cite{Womack}, and
Fisher~\cite{Fisher1,Fisher2}.  Furthermore, it is a theorem of
Cremona, Fisher, and Stoll~\cite{CFS} that the orbit of $G_\Q$ on
$V_\Q$ corresponding to any 4-Selmer element of $E_{A,B}$
always contains an element of $V_\Z$ having 
invariants exactly $A$ and $B$ (up to bounded powers of 2 and 3).

To prove Theorem~\ref{mainellip}, we are thus reduced to counting
suitable orbits of $G_\Z$ on $V_\Z$, where a counting method
involving the geometry-of-numbers,
developed in \cite{dodqf}, \cite{dodpf}, and \cite{BS}, may be applied. The method involves
counting lattice points, in fundamental domains for the action of $G_\Z$ on
$V_\R$, corresponding to elliptic curves of bounded height.  The
difficulty, as in \cite{BS}, 
lies in dealing with the cusps of these
fundamental domains.  In the case at hand, a number of
suitable adaptations to the method of \cite{BS} are required.
For example, the geometry of the cusps of the fundamental domains is
considerably more complicated than that in \cite{BS}.  In addition,
the method requires a count of elements having squarefree
discriminant, which again necessitates a technique that is quite
different than that used in \cite{BS} (but is closer to that
used in \cite{TC}); this is perhaps the most technical ingredient of
the paper.  

The end result of the method, however, is quite simple to state.
Namely, we show that the average occurring in Theorem~\ref{mainellip}
arises naturally as the sum of {two contributions}. One comes from the
main body of the fundamental domains, which corresponds to the average
number of elements in the 4-Selmer group having exact order 4; we show
that this average is given by the Tamagawa number
$\tau(G_\Q)=\tau(\PGL_4(\Q))=4$. The other comes from the {cusps} of
the fundamental domains, which corresponds to the average number of elements
in the 4-Selmer group having order strictly less than~4.  This latter contribution
is equal to the average size of the 2-Selmer group, which is 3 by
the work of \cite{BS}. The sum {$4 + 3 = 7$} then yields the average
size of the 4-Selmer group, as stated in Theorem~1.  (This also
explains why, in general, we expect the average size of the $n$-Selmer
group to be $\sigma(n)$. Namely, by the analogous reasoning, we 
expect the average number of order $n$ elements in the $n$-Selmer
group to equal $n$, the Tamagawa number of $\PGL_n$; summing
over the divisors of $n$ yields Conjecture~\ref{sigmaconj}.)

In Section~2, we recall the parametrization of elements of the
4-Selmer groups of elliptic curves by orbits of 
$G_\Z$ on $V_\Z=\Z^2\otimes\Sym^2(\Z^4)$,
collecting the necessary results from \cite{MMM}, \cite{BhHo}, and
especially \cite{CFS}.  In Section 3, we then adapt the methods of
\cite{dodpf} and \cite{BS} to count the number of
$G_\Z$-orbits on $V_\Z$ of bounded height.  In Section 4, by
developing a suitable sieve, we then count just those elements that
correspond to 4-Selmer elements of exact order 4 in appropriate
congruence families of elliptic curves having bounded height.
Combined with the average size of the 2-Selmer group in such
congruence families as determined in \cite{BS}, this is then used to
deduce Theorems~\ref{mainellip}, \ref{ellipcong}, and \ref{4lift}.

\section{Pairs of quaternary quadratic forms and $4$-coverings of
  elliptic curves}

Let $E:y^2=x^3+Ax+B$ be an elliptic curve over $\Q$, where $A$ and $B$
are integers such that, for all primes $p$, we have $p^6\nmid B$ if $p^4\mid A$. We define the quantities $I(E)$ and $J(E)$ by
\begin{equation}\label{eqEIJ}
  \begin{array}{rcl}
    I(E)\!\!\!\!&:=&\!\!\!\!-3A,\\[.02in]
    J(E)\!\!\!\!&:=&\!\!\!\!-27B.
  \end{array}
\end{equation}

In this section, we collect results relating $4$-coverings of elliptic
curves to certain orbits on pairs of quaternary quadratic
forms. For our applications, we need to consider not just
elliptic curves over~$\Q$, but also elliptic curves over other fields
such as $\R$ and $\Q_p$.
For any ring $R$ of characteristic~0 (or prime to~6), 
let $V_R$ denote the space of pairs of quaternary
quadratic forms with coefficients in~$R$. We always identify
quadratic forms with their Gram-matrices, and write elements $(A,B)\in
V_R$ as pairs of $4\times 4$ symmetric matrices with
\begin{equation}\label{eqAB}
2\cdot(A,B)=\left( \left[ \begin{array}{cccc} 2a_{11} & a_{12} & a_{13} & a_{14}\\ a_{12} & 2a_{22} & a_{23} & a_{24}\\ a_{13} & a_{23} & 2a_{33} & a_{34}\\ a_{14} & a_{24} & a_{34} & 2a_{44}\end{array} \right],
\left[ \begin{array}{cccc} 2b_{11} & b_{12} & b_{13} & b_{14}\\ b_{12} & 2b_{22} & b_{23} & b_{24}\\ b_{13} & b_{23} & 2b_{33} & b_{34}\\ b_{14} & b_{24} & b_{34} & 2b_{44}\end{array} \right]\right)
\end{equation}
where $a_{ij}$ and $b_{ij}$ are elements of $R$.

The group $\GL_2(R)\times\GL_4(R)$ acts naturally on $V_R$: an
element $g_2=\bigl(\begin{smallmatrix} r & {s}\\ {t}&
  u\end{smallmatrix}\bigr)\in \GL_2(R)$ acts via $g_2\cdot
(A,B)=(rA+sB,tA+uB)$ while an element $g_4\in \GL_4(R)$ acts via
$g_4\cdot (A,B)=(g_4Ag_4^t,g_4Bg_4^t)$.  It is clear that the actions
of $g_2$ and $g_4$ commute. Also note that the element
$(\lambda^{-2}I_2,\lambda I_4)$ acts trivially on $V_R$, where
$\lambda\in R^\times$ and $I_n$ denotes the identity element in
$\GL_n(R)$. We thus obtain a faithful action of $G_R$ on $V_R$, where $G_R$ is the group
\begin{equation}\label{eqGR}
  G_R:=\{(g_2,g_4)\in\GL_2(R)\times\GL_4(R):\det(g_2)\det(g_4)=1\}/\{(\lambda^{-2}I_2,\lambda I_4):\lambda\in R^\times\}.
\end{equation}

We now describe the ring of invariants for the action of $G_\C$ on
$V_\C$. If $(A,B)\in V_\C$, we define the {\it binary quartic
  resolvent form} $f_{A,B}$ of $(A,B)$ by
\begin{equation}\label{eqfAB}
  f_{A,B}(x,y):=2^4\det(Ax+By).
\end{equation}
If $(A',B')=(g_2,g_4)\cdot(A,B)$ for $(g_2,g_4)\in G_\C$, then one
checks the identity
\begin{equation}\label{twistdef}
  f_{A',B'}(x,y)=\det(g_4)^2f_{A,B}((x,y)\cdot
  g_2)=\frac{f_{A,B}((x,y)\cdot g_2)}{\det(g_2)^2}.
\end{equation}
The action of $\PGL_2(\C)$ on the space of binary quartic forms over
$\C$, defined by (\ref{twistdef}), has a free ring of invariants,
generated by two elements traditionally denoted by $I$ and $J$. (See,
e.g., \cite[Equation~(4)]{BS} for the definitions of $I$ and $J$.)
Thus the quantities $I$ and $J$ defined by
\begin{equation}\label{eqABIJ}
  \begin{array}{rcl}
    I(A,B) &:=& I(f_{A,B})\\
    J(A,B) &:=& J(f_{A,B})
  \end{array}
\end{equation}
are also invariant, under the action of $G_\C$ on $V_\C$, and in fact they freely
generate the full ring of invariants for this action.  We may use the above
definitions of $f_{A,B}$, $I(A,B)$, and $J(A,B)$ for elements $(A,B)\in
V_R$, where $R$ is any ring. Note that since $I(f)$ and $J(f)$ are
polynomials having degrees $2$ and $3$, respectively, in the coefficients
of $f$, the polynomials $I(A,B)$ and $J(A,B)$ have degrees $8$ and
$12$, respectively, in the coefficients of $(A,B)$.

The significance of the action of $G_R$ on $V_R$ may be seen from the
following three propositions.
For a field $K$, we say that $(A,B)\in V_K$ is {\it $K$-soluble} if
the quadrics defined by $A$ and $B$ have a $K$-rational point of intersection in
$\P^3$. Then we have:
\begin{proposition}\label{propparamfield}
  Let $K$ be a field having characteristic not $2$ or $3$. Let
  $E:y^2=x^3-\frac{I}3x-\frac{J}{27}$ be an elliptic curve over $K$.
  Then there exists a bijection between elements in $E(K)/4E(K)$ and
  $G_K$-orbits of $K\!$-soluble elements in $V_K$ having invariants
  equal to $I$ and $J$. Under this bijection, a $G_K$-orbit $G_K\cdot(A,B)$ corresponds to
  an element in $E(K)/4E(K)$ having order less than $4$ if and only if
  the binary quartic resolvent form of $(A,B)$ has a linear factor
  over $K$.

  Furthermore, the stabilizer in $G_K$ of any $($not necessarily
  $K\!$-soluble$)$ element in $V_K$, having nonzero
  discriminant and invariants $I$ and $J$, is isomorphic to $E(K)[4]$,
  where $E$ is the elliptic curve defined by
  $y^2=x^3-\frac{I}{3}x-\frac{J}{27}$.
\end{proposition}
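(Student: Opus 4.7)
The plan is to prove the three assertions by setting up a geometric correspondence between $G_K$-orbits on $V_K$ and embedded genus one curves in $\P^3_K$, and then identifying the resulting parametrization with the standard $4$-descent parametrization via $H^1(K, E[4])$; most of the underlying geometry is classical and has been assembled in \cite{MSS}, \cite{BhHo}, and \cite{CFS}. The core geometric dictionary is that for $(A,B) \in V_K$ with $\Delta(f_{A,B}) \neq 0$, the intersection $C_{A,B} = \{Q_A = Q_B = 0\} \subset \P^3_K$ is a smooth genus one curve, and conversely any smooth genus one curve in $\P^3$ embedded by a complete degree-$4$ linear system has ideal generated by a pencil of quadrics. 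A change of basis by $g_4 \in \GL_4(K)$ rewrites the ambient $\P^3$-coordinates, $g_2 \in \GL_2(K)$ picks a different generating pair for the ideal, and the central element $(\lambda^{-2} I_2, \lambda I_4)$ acts trivially because the two rescalings cancel. A classical pencil-of-quadrics calculation (as in \cite{BhHo}) identifies $\Jac(C_{A,B})$ with $y^2 = x^3 - (I/3) x - J/27$ when $(I(A,B), J(A,B)) = (I,J)$, so $G_K$-orbits with these invariants correspond to isomorphism classes of pairs $(C, \iota)$ consisting of a smooth genus one curve $C/K$ with $\Jac(C) \cong E$, together with an embedding $\iota \colon C \hookrightarrow \P^3_K$ by a complete degree-$4$ linear system, taken up to projective equivalence.

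To obtain the bijection with $E(K)/4E(K)$, I would use that a $K$-soluble $(A,B)$ gives such a curve $C_{A,B}$ together with a $K$-point $P_0$; using $P_0$ as origin identifies $C_{A,B}$ with $E$, and the hyperplane divisor on $C_{A,B}$ differs from $4 P_0$ by a degree-$0$ divisor, yielding a point of $E(K)$ well-defined modulo $4 E(K)$ (since replacing $P_0$ by $P_0 + Q'$ shifts the output by $4 Q'$). Conversely, given a class in $E(K)/4E(K)$, its image in $H^1(K, E[4])$ under the Kummer sequence yields a $4$-covering $C \to E$ with a $K$-point; the natural degree-$4$ $K$-rational divisor class arising from the $4$-covering structure embeds $C$ in $\P^3_K$ as a genus one normal curve, whose defining ideal is a pencil of quadrics producing the inverse orbit. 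Verifying that these two constructions are mutually inverse then follows the template of \cite{CFS}.

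For the claim about resolvents, $f_{A,B}$ has a $K$-rational linear factor iff the pencil contains a $K$-rational quadric of rank at most $3$; its singular locus maps, via the classical pencil construction, to a nontrivial $K$-rational $2$-torsion point of $\Jac(C_{A,B}) \cong E$, and a direct computation shows that the associated class in $E(K)/4E(K)$ is then of order dividing $2$, with the converse running in the same way. For the stabilizer, $\Stab_{G_{\bar K}}(A,B)$ is the group of projective automorphisms of $\P^3_{\bar K}$ preserving $C_{A,B}$ together with a compatible rescaling of the pencil; these are precisely translations by $E[4](\bar K)$ acting on $C_{A,B}$ through its Jacobian. Since this identification requires no $K$-solubility and is $\mathrm{Gal}(\bar K/K)$-equivariant, taking Galois invariants gives $\Stab_{G_K}(A,B) \cong E(K)[4]$. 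The main obstacle is the cohomological bookkeeping of the second paragraph: one must verify that the geometric orbit produced from a $4$-covering yields the same class in $H^1(K, E[4])$ as the Kummer cocycle for $E(K)/4E(K)$, and that the normalizations of $I, J$ on both sides of the bijection genuinely match. Both points have been addressed in \cite{BhHo} and \cite{CFS}, so the task here is primarily one of compatible assembly and citation.
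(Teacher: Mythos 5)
Your strategy is quite different from the paper's. The paper disposes of the bijection and the stabilizer calculation with a citation to \cite{MMM} and \cite[\S4.3]{BhHo}, and deduces the resolvent criterion from the factorization $C_4 \to C_2 \to E$ of \cite[\S3.3]{MMM} together with the characterization in \cite[Prop.~2.2]{CS} of which binary quartics correspond to the trivial class in $E(K)/2E(K)$. You instead sketch a direct geometric derivation through the hyperplane class on $C_{A,B} \subset \P^3$. The first and third assertions are sketched acceptably (modulo the verifications you explicitly defer to \cite{BhHo} and \cite{CFS}), but the second has a real gap. The phrase ``its singular locus maps, via the classical pencil construction, to a nontrivial $K$-rational $2$-torsion point of $\Jac(C_{A,B})$'' is not an actual construction: the vertex $v$ of a rank-$3$ member of the pencil is a point of $\P^3(K)$, not of the Jacobian, and a single singular quadric does not determine a $2$-torsion point. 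What the singular quadric does give is a $K$-rational $g^1_2$ on $C_{A,B}$, namely the two-to-one projection from $v$, whose degree-$2$ class $D'$ satisfies $2D' \sim H$ where $H$ is the hyperplane class (a hyperplane through $v$ meets the cone in two rulings, and their union cuts out a hyperplane section of $C_{A,B}$). It is this relation, which you never state, that makes the argument go: $[H - 4P_0] = 2[D' - 2P_0] \in 2E(K)$. The four $g^1_2$'s coming from the four singular quadrics form a torsor under $E[2]$; only \emph{differences} of two of them yield $2$-torsion points, not an individual one.

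The ``converse running in the same way'' is the more serious problem. The forward direction just described produces a $K$-rational square root of $[H]$ in $\mathrm{Pic}^2(C_{A,B})$ and concludes $[H - 4P_0] \in 2E(K)$. It is indeed symmetric to pass back from a $K$-rational square root of $[H]$ to a $K$-rational rank-$3$ quadric. What is not symmetric is the first step of the converse: one must show that a class of order $< 4$ in $E(K)/4E(K)$ forces $[H - 4P_0] \in 2E(K)$, rather than merely $[H - 4P_0] \in 2E(K) + E(K)[2]$ (which is all that ``order less than $4$'' gives you immediately), and your sketch supplies no argument for this. This is precisely the point the paper handles by routing the second assertion through the $2$-covering $C_2$ and \cite[Prop.~2.2]{CS} rather than through the pencil geometry; a careful self-contained proof along your lines must either close this gap or align the statement being proved with what the divisor-class computation actually delivers.
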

\begin{proof}
  The first and third assertions of the proposition, concerning the
  bijection and the stabilizer, follow immediately
  from \cite{MMM} and \cite[\S 4.3]{BhHo}. For the second assertion,
  regarding the elements of $E(K)/4E(K)$ having order less than 4, 
  \cite[\S3.3]{MMM} states that if $C_4\to E$ is the $4$-covering of $E$
  corresponding to $(A,B)$, then it factors through a 2-covering $C_2$
  of $E$, i.e., we
  have maps $C_4\to
  C_2\to E$, 
where $C_2\to E$ is the $2$-covering corresponding to the binary
quartic resolvent form of $(A,B)$ via \cite[\S3.1]{MMM}. Hence $(A,B)$
corresponds to an element having order less than $4$ if and only if
its binary quartic resolvent form corresponds to a trivial element in
$E(K)/2E(K)$, i.e., it has a linear factor over $\Q$ \cite[Proposition
  2.2]{CS}.
\end{proof}

An element $(A,B)\in V_\Q$ is said to be {\it locally soluble} if it
is $\R$-soluble and $\Q_p$-soluble for all primes $p$. We
similarly then obtain the following proposition:
\begin{proposition}\label{propparamq}
  Let $E:y^2=x^3-\frac{I}{3}x-\frac{J}{27}$ be an elliptic curve over
  $\Q$. Then there exists a bijection between elements in the
  $4$-Selmer group of $E$ and $G_\Q$-orbits on locally soluble
  elements in $V_\Q$ having invariants equal to $I$ and $J$.

  Furthermore, if $(A,B)$ has invariants $I$ and $J$, then the
  $G_\Q$-orbit $G_\Q\cdot(A,B)$ corresponds to an element in $S_4(E)$
  having order less than $4$ if and only if the binary quartic
  resolvent form of $(A,B)$ has a rational linear factor.
\end{proposition}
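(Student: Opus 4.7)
The plan is to upgrade the field-theoretic bijection of Proposition~\ref{propparamfield} to the Selmer-group setting via standard Galois descent, with Cassels' theorem on locally soluble $n$-coverings supplying the surjectivity. First, applying Proposition~\ref{propparamfield} over $\bar\Q$, one has $E(\bar\Q)/4E(\bar\Q)=0$, so there is a single $G_{\bar\Q}$-orbit on $V_{\bar\Q}$ with invariants $(I,J)$, and its stabilizer is isomorphic to $E[4]$. The standard nonabelian cohomology dictionary for twisted forms then identifies the $G_\Q$-orbits on $V_\Q$ with invariants $(I,J)$ with (a subset of) $H^1(\Q,E[4])$; the analogous identification holds over each completion $\Q_v$, and these identifications are compatible with the restriction maps.

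Next, Proposition~\ref{propparamfield} applied over $\Q_v$ shows that a $G_{\Q_v}$-orbit is $\Q_v$-soluble if and only if its cohomology class lies in the image of the Kummer map $\delta_v\colon E(\Q_v)/4E(\Q_v)\hookrightarrow H^1(\Q_v,E[4])$. Consequently a $G_\Q$-orbit with invariants $(I,J)$ is locally soluble if and only if its class restricts into the image of $\delta_v$ for every place $v$, which is precisely the definition of $S_4(E)$. For the surjectivity---that every class in $S_4(E)$ is realized by some $G_\Q$-orbit on $V_\Q$---I would invoke Cassels' theorem: a locally soluble 4-covering $C\to E$ carries a degree-$4$ divisor defined over $\Q$, yielding an embedding $C\hookrightarrow\P^3$ over $\Q$; the classical fact that a genus-one normal curve in $\P^3$ is cut out by a pair of quaternary quadratic forms well-defined up to $G_\Q$-equivalence then produces the desired element of $V_\Q$ (cf.\ \cite{MMM}, \cite[\S4.3]{BhHo}). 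Verifying that this construction is indeed inverse, at the orbit level, to the one extracted from Proposition~\ref{propparamfield} is the principal technical obstacle.

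Finally, for the assertion about elements of order less than $4$, the proof of Proposition~\ref{propparamfield} already exhibits the factorization $C_4\to C_2\to E$ of the 4-covering associated to $(A,B)$ through the 2-covering associated to $f_{A,B}$. Globalizing this factorization, the multiplication-by-$2$ map $S_4(E)\to S_2(E)$ sends the class of $(A,B)$ to the 2-Selmer class represented by the binary quartic $f_{A,B}$. Thus the $G_\Q$-orbit of $(A,B)$ represents an element of order strictly less than $4$ in $S_4(E)$ if and only if the 2-Selmer class of $f_{A,B}$ is trivial, which by \cite[Prop.~2.2]{CS} is equivalent to $f_{A,B}$ having a rational linear factor over $\Q$.
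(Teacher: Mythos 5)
The paper gives no explicit proof of this proposition; it is introduced with ``We similarly then obtain,'' deferring to Proposition~\ref{propparamfield} together with standard Galois descent and the references \cite{MMM}, \cite{BhHo}, \cite{CFS}. Your proof fills in exactly that argument: a single $G_{\bar\Q}$-orbit with stabilizer $E[4]$, the twisting dictionary identifying $G_\Q$-orbits with a subset of $H^1(\Q,E[4])$, local solubility matching membership in the local Kummer images at every place (hence $S_4(E)$), surjectivity via Cassels' degree-$n$ divisor and the pencil-of-quadrics description of a genus-one normal curve in $\P^3$, and finally the factorization $C_4\to C_2\to E$ from \cite[\S3.3]{MMM} together with \cite[Prop.~2.2]{CS} for the order-$<4$ criterion. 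This is the intended argument, and the one technical point you flag as outstanding (that the Cassels/pencil construction inverts the one coming from Proposition~\ref{propparamfield} at the orbit level) is precisely what is supplied by the cited sources.

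One small caveat worth noting, which is present in the paper's statement as well as in yours: ``$\xi\in S_4(E)$ has order $<4$'' means $2\xi=0$ in $H^1(\Q,E[4])$, whereas ``$f_{A,B}$ has a rational linear factor'' is, via \cite[Prop.~2.2]{CS}, the vanishing of $p(\xi)\in H^1(\Q,E[2])$, where $p$ is induced by $E[4]\xrightarrow{\times 2}E[2]$. Since $\times 2$ on $H^1(\Q,E[4])$ factors as $j\circ p$ with $j$ induced by $E[2]\hookrightarrow E[4]$, these agree only when $j$ is injective, i.e.\ when $E[2](\Q)\subset 2E[4](\Q)$, which holds in particular when $E(\Q)[2]=0$. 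The paper tacitly restricts to that situation in \S4.2 (curves with nontrivial rational $2$-torsion are discarded as negligible), so this is not a defect in your argument relative to the paper, but it would be worth a sentence making the hypothesis explicit.
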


By the work of Cremona, Fisher, and Stoll~\cite[Theorem 1.1]{CFS}, any
locally soluble element $(A,B)\in V_\Q$ having integral invariants $I$
and $J$ is $\GL_2(\Q)\times\GL_4(\Q)$-equivalent to an integral
element $(A',B')\in V_\Z$ having the same invariants $I$ and $J$. In
particular, it follows that such an $(A,B)$ is $G_\Q$-equivalent to
either $(A',B')$ or $(A',-B')$. Since $(A',B')$ and $(A',-B')$ have
the same invariants, we obtain the following proposition:
\begin{proposition}\label{propselparz}
  Let $E/\Q$ be an elliptic curve. Then the elements in the $4$-Selmer
  group of $E$ are in bijective correspondence with $G_\Q$-equivalence
  classes on the set of locally soluble elements in $V_\Z$ having
  invariants equal to $I(E)$ and $J(E)$. 

Furthermore, under this correspondence, elements of exact order $4$
correspond to the $G_\Q$-equivalence classes whose binary quartic
resolvent forms have no rational linear factor.
\end{proposition}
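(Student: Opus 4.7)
The plan is to derive this proposition directly from Proposition~\ref{propparamq} by combining it with the Cremona--Fisher--Stoll integrality theorem \cite[Theorem~1.1]{CFS} and a short check that $\GL_2(\Q)\times\GL_4(\Q)$-equivalence can be promoted to $G_\Q$-equivalence at the cost of possibly replacing $B$ by $-B$.

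First I would record the following sign dichotomy: if $(A,B),(A',B')\in V_\Q$ have the same nonzero invariants and are $\GL_2(\Q)\times\GL_4(\Q)$-equivalent via $(g_2,g_4)$, then the equivariance identity (\ref{twistdef}) forces $\det(g_2)^2\det(g_4)^2=1$, so $c:=\det(g_2)\det(g_4)\in\{\pm1\}$. When $c=+1$ the pair $(g_2,g_4)$ already represents an element of $G_\Q$. When $c=-1$, I would compose on the left with $(\mathrm{diag}(1,-1),I_4)$: this element acts on $V_\Q$ by $(A',B')\mapsto(A',-B')$ and multiplies the product of determinants by $-1$, so the composite lies in $G_\Q$ and carries $(A,B)$ to $(A',-B')$. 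Hence $(A,B)$ is $G_\Q$-equivalent to one of $(A',\pm B')$, both of which share the invariants of $(A,B)$; indeed $B\mapsto -B$ replaces $f_{A,B}(x,y)$ by $f_{A,B}(x,-y)$, and the binary quartic invariants $I,J$ are visibly preserved under $y\mapsto-y$.

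Applying this observation together with \cite[Theorem~1.1]{CFS} to any locally soluble $(A,B)\in V_\Q$ of invariants $(I(E),J(E))$ produces a $G_\Q$-equivalent integral representative in $V_\Z$ with the same invariants. Combined with Proposition~\ref{propparamq}, it follows that the natural map from $G_\Q$-equivalence classes of locally soluble elements of $V_\Z$ with invariants $(I(E),J(E))$ into $G_\Q$-orbits of locally soluble elements of $V_\Q$ with those invariants is surjective, and it is tautologically injective; therefore it bijects onto $S_4(E)$. The ``exact order $4$'' clause is inherited directly from the analogous assertion in Proposition~\ref{propparamq}, since the existence of a rational linear factor of $f_{A,B}$ depends only on the $G_\Q$-orbit of $(A,B)\in V_\Q$ (the binary quartic resolvent is determined up to the $\PGL_2(\Q)$-action, under which having a rational linear factor is preserved).

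The only subtle point is the sign bookkeeping in the descent from $\GL_2(\Q)\times\GL_4(\Q)$ to $G_\Q$, i.e.\ verifying that the map in the previous paragraph is surjective and not merely that an integral representative exists up to a larger group. Once the explicit composition with $(\mathrm{diag}(1,-1),I_4)$ handles the $c=-1$ case, the proof is essentially the packaging of \cite{CFS} and Proposition~\ref{propparamq}.
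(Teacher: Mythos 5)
Your argument is correct and follows the paper's own route: the paper likewise combines Proposition~\ref{propparamq} with the Cremona--Fisher--Stoll integrality theorem \cite[Theorem~1.1]{CFS}, and then notes that because the two pairs share the same nonzero invariants, the $\GL_2(\Q)\times\GL_4(\Q)$-equivalence descends to a $G_\Q$-equivalence after possibly replacing $(A',B')$ by $(A',-B')$. The paper leaves this sign bookkeeping to a single sentence, while you make it explicit via the composition with $(\mathrm{diag}(1,-1),I_4)$ and the invariance of $I,J$ under $y\mapsto -y$; apart from this added detail the proofs coincide.
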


Motivated by Propositions~\ref{propparamfield}--\ref{propselparz}, we
say that an element of $V_\Z$ (or $V_\Q$) is {\it strongly irreducible}
if its binary quartic resolvent form does not possess a rational linear factor.
Thus to count the number of 4-Selmer elements of elliptic curves
having bounded invariants, we wish to count the number of
$G_\Q$-equivalence classes of strongly irreducible elements in $V_\Z$
having bounded invariants.  In the next section, we begin by first determining
the asymptotic number of $G_\Z$-equivalence classes.

\section{The number of $G_\Z$-classes of strongly irreducible pairs of integral quaternary quadratic forms having bounded invariants}

For $i\in\{0,1,2\}$, let $V_\R^{(i)}$ denote the set of 
elements $(A,B)\in V_\R$ such that the binary quartic resolvent form
$f_{A,B}(x,y):=2^4\det(Ax+By)$ has nonzero discriminant, $i$ pairs
of complex conjugate roots in $\P^1_\C$, and thus $4-2i$ roots in
$\P^1_\R$. It
follows from \cite[Lemma 6.2.2]{SikThesis} that every element in
$V_\Z^{(1)}$ and $V_\Z^{(2)}$ is $\R$-soluble. However, this is not
the case for all elements in $V_\Z^{(0)}$; we denote the set of
$\R$-soluble elements in $V_\R^{(0)}$ by $V_\R^{(0\#)}$. 
Let $V_\Z^{(i)}:=V_\Z\cap V_\R^{(i)}$ for $i\in \{0,1,2,0\#\}$.
Then the action of $G_\Z$ on $V_\Z$ preserves also the sets $V_\Z^{(i)}$. 

The invariants $I(A,B)$ and $J(A,B)$ of $(A,B)\in V_\Z$ are as defined 
in (\ref{eqABIJ}). We then define the {\it
  discriminant} and the {\it height} of $(A,B)$ having
invariants $I$ and $J$ as follows:
\begin{equation}\label{eqABDH}
\begin{array}{ccccccc}
  \Delta(A,B)&:=&\Delta(f_{A,B})&=&\Delta(I,J)&:=&(4I^3-J^2)/27;\\
  H(A,B)&:=& H(f_{A,B}) &:=&H(I,J)&:=&\max\{|I^3|,J^2/4\}.
\end{array}
\end{equation}
Equation (\ref{eqABDH}) yields an expression for the discriminant
$\Delta(A,B)$ that is an integer polynomial of degree 24 in the
entries of $A$ and $B$.  We use (\ref{eqABDH}) as the definition of
the discriminant of elements in $V_R$ for any ring $R$, and as the
definition of the height of elements in $V_\R$.

Our purpose in this section is to count the number of strongly
irreducible $G_\Z$-orbits on $\pnv$ having bounded height for
$i\in\{0\#,1,2\}$.  To state the precise result we need some further
notation.  For any $G_\Z$-invariant set $S\subset V_\Z$, let $N(S;X)$
denote the number of $G_\Z$-equivalence classes on $S^\irr$  
having height less than $X$, where $S^\irr$ is used to denote the set of strongly
irreducible elements of $S$.  Let
$N^{+}(X)$ (resp.\ $N^{-}(X)$) denote the number of integer pairs
$(I,J)$ satisfying $\Delta(I,J)>0$ (resp.\ $\Delta(I,J)<0$) and
$H(I,J)<X$. 
By~\cite[Proposition~2.10]{BS}, we have 
\begin{equation}\label{eqnij}
  \begin{array}{rcl}
    N^{\pm}(X)&=&\:\displaystyle\frac85\:X^{5/6}+O(X^{1/2}),\\[0.175in]
    N^{\pm}(X)&=&\displaystyle\frac{32}5X^{5/6}+O(X^{1/2}).
  \end{array}
\end{equation}
Let $\omega$ be a fixed algebraic nonzero top-degree left-invariant
differential form on $G$ such that, for every prime $p$, the measure
of $G_{\Z_p}$ computed with respect to $\omega$ is
$\#G_{\F_p}/p^{\text{dim} G}=\#G_{\F_p}/p^{18}$. There is a natural
map $G_\R\times R^{(i)}\to V_\R^{(i)}$ given by
$(\gamma,x)\mapsto\gamma\cdot x$, where the sets $R^{(i)}\subset V_\R$
are defined just after (\ref{eqR}). We will see in Section 3.3 that the
Jacobian change of variables of this map (computed with respect to the measure on
$G_\R$ obtained from $\omega$, the measure $dIdJ$ on $R^{(i)}$, and
the Euclidean measure on $V_\R$ normalized so that $V_\Z$ has
covolume~$1$) is a nonzero rational constant independent of
$i$. Henceforth, we will denote this constant by $\J$.

The aim of this section is to prove the following theorem:

\begin{theorem}\label{thsec2main}
We have:
\begin{itemize}
\item[{\rm (a)}]$N(\nv;X)=\displaystyle\frac14|\J|\cdot\Vol(G_\Z\backslash G_\R)N^{-}(X)+o(X^{5/6});$
\item[{\rm
    (b)}]$N(\pnv;X)=\,\displaystyle\frac18|\J|\cdot\Vol(G_\Z\backslash
  G_\R)N^{+}(X)+o(X^{5/6})$ for $i=0\#$ and $2$, 
\end{itemize}
where the volume of $G_\Z\backslash G_\R$ is
computed with respect to the measure obtained from $\omega$.
\end{theorem}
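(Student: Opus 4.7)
The plan is to adapt the geometry-of-numbers method developed in \cite{dodpf} and \cite{BS} to the 36-dimensional representation $V$ of $G$. First I would fix a Siegel-type fundamental domain $\mathcal{F}$ for $G_\Z\backslash G_\R$, constructed from Iwasawa coordinates on each of the $\GL_2$ and $\GL_4$ factors, together with the multi-section $R^{(i)}\subset V_\R^{(i)}$ (referenced just after (\ref{eqR})) containing a representative for each $G_\R$-orbit in $V_\R^{(i)}$ of given invariants $(I,J)$. Then $\mathcal{F}\cdot R^{(i)}$ is a fundamental set for the $G_\Z$-action on $V_\R^{(i)}$, covering each $G_\Z$-orbit with multiplicity equal to the size of its stabilizer in $G_\R$, which by Proposition~\ref{propparamfield} is $|E(\R)[4]|=4$ when $\Delta<0$ (the case $i=1$) and $|E(\R)[4]|=8$ when $\Delta>0$ (the cases $i\in\{0\#,2\}$). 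These stabilizer sizes explain the constants $\frac14$ and $\frac18$ appearing in the theorem.

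Next I would apply Bhargava's averaging trick: rather than count lattice points in a single fundamental set, I would integrate the count over a compact set $G_0\subset G_\R$ against the Haar measure determined by $\omega$. Interchanging counting and integration reduces the problem to counting lattice points of $(V_\Z^{(i)})^{\irr}$ of height less than $X$ inside the thickened region $G_0\cdot\mathcal{F}\cdot R^{(i)}(X)$, where $R^{(i)}(X):=\{v\in R^{(i)}:H(v)<X\}$. Away from the cusps of $\mathcal{F}$, a version of Davenport's lemma shows that this count is asymptotic to the Euclidean volume of the region; the Jacobian formula for the map $G_\R\times R^{(i)}\to V_\R^{(i)}$ (with Jacobian constant $\J$) evaluates this volume as $|\J|\cdot\Vol(G_\Z\backslash G_\R)\cdot\Vol(R^{(i)}(X))$, and integrating $dI\,dJ$ over the relevant range of invariants together with (\ref{eqnij}) identifies $\Vol(R^{(i)}(X))$ with $N^{-}(X)$ for $i=1$ and with $N^{+}(X)$ for $i\in\{0\#,2\}$, up to lower-order error.

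The main obstacle, as in \cite{BS}, is the analysis of the cusps of $\mathcal{F}$. The fundamental domain is unbounded in directions corresponding to the simple roots of the Levi subgroups of both $\GL_2$ and $\GL_4$, and lattice points deep in these cusp regions could a priori produce spurious contributions of order $X^{5/6}$. The averaging over $G_0$ kills the contribution of elements having a fixed nonvanishing cusp coordinate --- such a coordinate gets smeared over a set of positive volume under $G_0$-translation, so its count is bounded by the volume --- leaving only elements in which certain coordinates of $(A,B)$ are forced to vanish. For each such coordinate-vanishing locus one must verify that the corresponding lattice elements are \emph{not} strongly irreducible, i.e.,\ that the vanishing conditions force the binary quartic resolvent $f_{A,B}(x,y)=2^4\det(Ax+By)$ to acquire a rational linear factor. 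The cusp geometry here is substantially more intricate than in \cite{BS} since cusp directions arise from both $\GL_2$ and $\GL_4$, producing a much larger collection of coordinate-vanishing loci to classify and check; this case analysis is the most technically demanding step. Combining the $o(X^{5/6})$ cusp bound with the main-term estimate above then yields the stated asymptotic.
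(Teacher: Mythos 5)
Your high-level framework --- stabilizer multiplicities $4$ and $8$, averaging over a compact $G_0$, Davenport's lemma, the Jacobian identity relating $dv$ to $\omega\,dI\,dJ$ --- matches the paper. But your description of the cusp analysis contains a genuine gap. You claim that after averaging, the problematic cuspidal contribution consists of elements where certain coordinates of $(A,B)$ are forced to vanish, and that for each such coordinate-vanishing locus ``one must verify that the corresponding lattice elements are \emph{not} strongly irreducible.'' This cannot work: only very specific vanishing patterns force a rational linear factor in the resolvent (the paper's Lemma~\ref{lemcondred} identifies exactly four such patterns, each of which forces either $\det A = 0$ or $\Delta(A,B)=0$). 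In general, strongly irreducible integral elements \emph{do} persist in the cusp --- for example an element with $a_{11}=0$ but $a_{12}, b_{11}\neq 0$ has no reason to be reducible. The paper's actual argument (Lemma~\ref{lemtable} and Table~\ref{table1}) partitions the set $\{(A,B)\in V_\Z^{\irr}: a_{11}=0\}$ into fourteen subsets by specifying which further coordinates do or do not vanish, and for each subset bounds the averaged count directly by integrating the expected number of lattice points over the Siegel coordinates $s_1,\ldots,s_4$; the key trick is to multiply the integrand by $X^{1/24}$ times the weight of a coordinate that is guaranteed nonzero (hence $\geq 1$ in absolute value), which improves the exponents enough to make the integral converge and gives $O(X^{19/24})$ or better. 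This is a direct counting estimate, not a reducibility statement.

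You also omit the complementary estimate. Davenport's lemma applied to the main body counts \emph{all} integral points there, so having shown cuspidal strongly irreducible points are few, you still need to show that non-strongly-irreducible points in the main body (where $a_{11}\neq 0$) contribute $o(X^{5/6})$. The paper proves this (Lemma~\ref{lemfewred}) by a mod-$p$ sieve: for a positive-density set of primes $p$, one exhibits a pair $(A,B)\in V_{\F_p}$ whose resolvent $x^4+y^4$ has no root in $\P^1_{\F_p}$; any integral element whose reduction mod $p$ lies in the $G_{\F_p}\times\F_p^\times$-orbit of such a pair fails the reducibility test, and letting the set of such primes grow drives the density of reducible elements to zero. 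Without both this lemma and the corrected cusp estimate, the identification of $N(V_\Z^{(i)};X)$ with $\frac{1}{n_i}\Vol(\FF\cdot R^{(i)}(X))$ does not follow.
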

The value of $\J$ is not difficult to compute, but is irrelevant for
the proofs of Theorems~\ref{mainellip} and~\ref{ellipcong} because of
its cancellation in (\ref{eqJcance}).

\subsection{Reduction theory}\label{s31}
In this subsection, we construct certain finite covers of fundamental
domains for the action of $G_\Z$ on $\pnr$ for $i\in\{0\#,1,2\}$.  We
start by constructing fundamental sets for the action of $G_\R$ on
$\pnr$, for $i\in\{0\#,1,2\}$. The following result is a consequence
of Proposition \ref{propparamfield} along with the fact that every
element in $V_\R^{(0+)}$, $V_\R^{(1)}$ and $V_\R^{(2)}$ is $\R$-soluble.
\begin{proposition}\label{propgrvr}
  Let $(I,J)$ be an element of $\R\times\R$ such that $\Delta(I,J)\neq 0$. Then
  \begin{itemize}
  \item[{\rm (1)}] If $\Delta(I,J)<0$, then the set of elements in
    $V_\R$ having fixed invariants $I$ and $J$ consists of one
    $\R$-soluble $G_\R$-orbit. The size of the stabilizer in $G_\R$ of
    any element in this orbit is $4$.
  \item[{\rm (2)}] If $\Delta(I,J)>0$, then the set of $\R$-soluble
    elements in $V_\R$ having fixed invariants $I$ and $J$ consists of
    two $G_\R$-orbits. There is one such orbit from each of $\prs$ and
    $\tr$. The size of the stabilizer in $G_\R$ of any element in
    either of these orbits is $8$.
  \end{itemize}
\end{proposition}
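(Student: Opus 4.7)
The plan is to reduce Proposition~\ref{propgrvr} to Proposition~\ref{propparamfield} applied over $K=\R$, combined with the classical topology of the real points of a real elliptic curve. Let $E$ denote the Jacobian $y^2=x^3-\frac{I}{3}x-\frac{J}{27}$. Proposition~\ref{propparamfield} (with $K=\R$) already provides a bijection between $G_\R$-orbits of $\R$-soluble elements of invariants $(I,J)$ and $E(\R)/4E(\R)$, and identifies every $G_\R$-stabilizer of an element of nonzero discriminant and invariants $(I,J)$ with $E(\R)[4]$.

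A short computation (with $a=-I/3$, $b=-J/27$) turns the Weierstrass discriminant $-16(4a^3+27b^2)$ of $E$ into $16\,\Delta(I,J)$, so it has the same sign as $\Delta(I,J)$. Hence $E(\R)$ is $\R/\Z$ as a real Lie group when $\Delta(I,J)<0$ and $\R/\Z\oplus\Z/2\Z$ when $\Delta(I,J)>0$; in the two cases one reads off
\[
|E(\R)[4]|=4,\;\;|E(\R)/4E(\R)|=1,\quad\text{resp.}\quad |E(\R)[4]|=8,\;\;|E(\R)/4E(\R)|=2.
\]
This already supplies the stabilizer orders and the asserted total number of $\R$-soluble orbits in both parts of the proposition; the only remaining task is to place these orbits in the correct components $V_\R^{(i)}$.

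For the placement, I would observe that the partition $V_\R=V_\R^{(0)}\sqcup V_\R^{(1)}\sqcup V_\R^{(2)}$ is $G_\R$-stable: by (\ref{twistdef}), $f_{A',B'}$ is obtained from $f_{A,B}$ by a real linear substitution and a nonzero real rescaling, so the number of real roots of $f_{A,B}$ is a $G_\R$-invariant. Since $\Delta(f_{A,B})=\Delta(I,J)$ by (\ref{eqABDH}), the case $\Delta<0$ lies entirely in $V_\R^{(1)}$ and the case $\Delta>0$ in $V_\R^{(0)}\sqcup V_\R^{(2)}$. In the first case every element of $V_\R^{(1)}$ with these invariants is $\R$-soluble (by the property of $V_\R^{(1)}$ recalled at the start of this section), so it is contained in the unique $\R$-soluble orbit given by Proposition~\ref{propparamfield}, with stabilizer of order~$4$, which proves~(1). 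In the second case, both $V_\R^{(2)}$ and (by definition) $V_\R^{(0\#)}$ are $\R$-soluble, and the two $\R$-soluble orbits from Proposition~\ref{propparamfield} must be distributed between them.

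The main obstacle is to show this distribution puts one orbit in each of $V_\R^{(0\#)}$ and $V_\R^{(2)}$, rather than both landing in one of them. I would handle this by producing, for each $(I,J)$ with $\Delta>0$, explicit real pairs of quadrics lying in each of $V_\R^{(0\#)}$ and $V_\R^{(2)}$. The binary quartic forms with invariants $(I,J)$ and $\Delta>0$ form two $\PGL_2(\R)$-orbits---one with four real roots and one with none, matching the two elements of $E(\R)/2E(\R)$---and each admits a real lift to a pencil of quadrics (for instance via a diagonal model), with the resolvent's real root pattern placing the lift in $V_\R^{(0)}$ or $V_\R^{(2)}$ respectively. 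The $V_\R^{(2)}$ lift is automatically $\R$-soluble; $\R$-solubility of the $V_\R^{(0)}$ lift can be checked on the diagonal model or extracted from the explicit real classifications in~\cite{MMM} or~\cite{CFS}. Once both $V_\R^{(0\#)}$ and $V_\R^{(2)}$ are known to contain an element with invariants $(I,J)$, Proposition~\ref{propparamfield}'s count of exactly two $\R$-soluble orbits forces one in each, completing~(2).
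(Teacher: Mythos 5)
Your proposal is correct and follows the paper's intended route: the paper justifies this proposition in one line as a consequence of Proposition~\ref{propparamfield} (applied with $K=\R$) together with the $\R$-solubility of $V_\R^{(0\#)}$, $V_\R^{(1)}$, and $V_\R^{(2)}$, which is exactly the reduction you carry out, supplemented by the standard computation of $E(\R)[4]$ and $E(\R)/4E(\R)$ in the two sign cases. The one point requiring extra care---placing one orbit in each of $V_\R^{(0\#)}$ and $V_\R^{(2)}$ when $\Delta>0$---is settled in the paper by the explicit construction of $L^{(0\#)}$ and $L^{(2)}$ immediately after the proposition (with $\R$-solubility of $L^{(0\#)}$ verified via Siksek's Theorem~6.3.1), which is precisely your proposed strategy of exhibiting explicit real lifts.
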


For $i=0\#$, $1$, and $2$, we choose fundamental sets $R^{(i)}\subset
V_\R^{(i)}$ for the action of $G_\R$ on $V_\R^{(i)}$ as follows.  Let
$f_{I,J}^{(i)}$ be the forms constructed in \cite[Table 1]{BS}, for
$i=0$, $1$, and $2$.  Then for each $(I,J)\in\R\times\R$ with
$\Delta(I,J)>0$ (resp.\ $\Delta(I,J)<0$) and $H(I,J)=1$, we obtain two
binary quartic forms $f_{I,J}^{(0)}$ and $f_{I,J}^{(2)}$ (resp.\ one
binary quartic form $f_{I,J}^{(1)}$) having invariants $I$ and
$J$. The coefficients of all these forms $f_{I,J}^{(i)}$ are bounded
independently of $I$ and $J$. We write
\begin{equation}
\begin{array}{rcl}
f_{I,J}^{(0)}&=&\kappa y(x+\lambda_1y)(x+\lambda_2y)(x+\lambda_3y),\\[.02in]
f_{I,J}^{(1)}&=&\kappa y(x+\lambda y)(x^2+r^2y^2 ),\\[.02in]
f_{I,J}^{(2)}&=&\kappa(x^2+r_1^2y^2)(x^2+r_2^2y^2),
\end{array}
\end{equation}
with $\kappa>0$, $\lambda_1>\lambda_2>\lambda_3$, $r>0$ and $r_1>r_2>0$.

Consider the sets
\begin{equation}\label{eqR}
  \begin{array}{ccl}
L^{(0\#)}&=&\left\{\kappa^{1/4}
\left( \left[ \begin{array}{cccc} 0 &  &  & \\  & -1 &  & \\  &  & 1 & \\  &  & & -1 \end{array} \right],
\left[ \begin{array}{cccc} 1 &  &  & \\  & \!\!\!\!-\lambda_1 &  & \\  &  & \lambda_2 & \\  &  &  & -\lambda_3\end{array} \right]\right)
\right\},\\[.35in]
L^{(1)}&=&\left\{\kappa^{1/4}
\left( \left[ \begin{array}{cccc} 0 &  &  & \\  & -1 &  & \\  &  &  & 1\\  &  & 1 &  \end{array} \right],
\left[ \begin{array}{cccc} 1 &  &  & \\  & -\lambda &  & \\  &  & r & \\  &  &  & -r\end{array} \right]\right)
\right\},\\[.35in]
L^{(2)}&=&\left\{\kappa^{1/4}
\left( \left[ \begin{array}{cccc}  & 1 &  & \\ 1 &  &  & \\  &  &  & 1\\  &  & 1 &  \end{array} \right],
\left[ \begin{array}{cccc} r_1 &  &  & \\  & -r_1 &  & \\  &  & r_2 & \\  &  &  & -r_2\end{array} \right]\right)
\right\}.
  \end{array}
\end{equation}
Since the coefficients of the forms $f^{(i)}_{I,J}$ are
bounded independently of $I$ and $J$, the coefficients of the elements
in $L^{(0\#)}$, $L^{(1)}$, and $L^{(2)}$ are also bounded independently
of $I$ and $J$.

Let $R^{(i)}$ be defined to be $\R_{>0}\cdot L^{(i)}$. The sets
$R^{(i)}$ then satisfy
the following two properties that we use throughout this
section:
\begin{enumerate}
\item The sets $R^{(i)}$ are subsets of $V^{(i)}_\R$ for $i=0\#$, $1$,
  and $2$. Furthermore, $R^{(0\#)}$ and $R^{(2)}$ (resp.\ $R^{(1)}$)
  contain exactly one point having invariants $I$ and $J$ for each
  pair $(I,J)\in\R\times\R$ with $\Delta(I,J)>0$ (resp.\
  $\Delta(I,J)<0$).
\item For $i\in\{0\#,1,2\}$, the coefficients of all the elements
  of height $X$ in $R^{(i)}$ are bounded by~$O(X^{1/24})$.
\end{enumerate}
To verify that $R^{(i)}\subset V^{(i)}_\R$, it suffices to show that
the elements in $L^{(i)}$ are soluble over $\R$. For $(A,B)\in
L^{(0\#)}$, this follows by applying~\cite[Theorem~6.3.1]{SikThesis}
on $(A+\epsilon B,B)$ for sufficiently small~$\epsilon$, and for
$(A,B)\in L^{(i)}$ with $i=1,2$ this follows from \cite[Lemma 6.2.2]{SikThesis}. The second
part of the first assertion is immediate from our choices of the
$f_{I,J}$'s.  The second assertion follows from the fact that the
height of $(A,B)$ is a homogeneous function of degree $24$ in the
coefficients of $A$ and $B$.

Let $\FF$ denote a fundamental domain in $G_\R$ for the left action of
$G_\Z$ on $G_\R$ that is contained in a standard Siegel
set~\cite[\S2]{BH}. We may assume that $\FF=\{nak:n\in N'(a),a\in A',k\in K\}$, where
\begin{eqnarray*}\label{eqiwasawa}
  &K&=\{{\rm subgroup\; of \;orthogonal\; transformations\; } 
\SO_2(\R)\times\SO_4(\R)\subset G_\R\};\\
  &A'&=\{a(s_1,s_2,s_3,s_4):s_1>c_1;s_2,s_3,s_4>c_2\},\\
  &&\;\;\;\;\;\;\;\;{\rm where}\;a(s_1,s_2,s_3,s_4)=
\left[\left(\begin{array}{cc}
{s_1^{-1}} & {}\\ {} & {s_1}
\end{array}\right),
\left(\begin{array}{cccc}
    {s_2^{-3}s_3^{-1}s_4^{-1}} & {}&{}&{}\\ {} & \!\!\!\!\!\!\!\!\!\!\!\!\!\!\!\!\!{s_2s_3^{-1}s_4^{-1}}&{}&{}\\{} & {}&\!\!\!\!\!\!\!\!\!\!\!\!\!{s_2s_3s_4^{-1}}&{}\\{} & {}&{}&\!\!\!\!\!\!\!\!\!\!{s_2s_3s_4^{3}}
\end{array}\right)\right];\\
&N'&=\{n(u_1,\cdots,u_7):(u_i)\in\nu(a)\},\;\\
&&\;\;\;\;\;\;\;\;{\rm where}\; n(u)=\left[\left(\begin{array}{cc}
{1} & {} \\ {u_1} & {1}
\end{array}\right),
\left(\begin{array}{cccc}
{1} & {}&{}&{}\\ {u_2} & {1}&{}&{}\\{u_3}&{u_4}&{1}&{}\\{u_5} & {u_6}&{u_7}&{1}
\end{array}\right)\right];
\end{eqnarray*}
here $\nu(a)$ is a bounded and measurable subset of $[-1/2,1/2]^7$
depending only on $a\in A'$, and $c_1,c_2>0$ are absolute constants.

Fix $i\in\{0\#,1,2\}$. For $h\in G_\R$, we regard $\FF h\cdot R^{(i)}$
as a multiset, where the multiplicity of an element $v\in V_\R$ is
equal to $\#\{g\in\FF:v\in gh\cdot R^{(i)}\}$.  As in~\cite[\S2.1]{BS}, it follows that for any $h\in G_\R$ and any
$v\in V_\R^{(i)}$, the $G_\Z$-orbit of $v$ is represented $m(v)$ times in $\FF h\cdot R^{(i)}$, where
$$
m(v):=\#\Stab_{G_\R}(v)/\#\Stab_{G_\Z}(v).
$$ That is, the sum of the multiplicity in $\FF h\cdot R^{(i)}$ of $v'$,
over all $v'$ that are $G_\Z$-equivalent to $v$, is equal to $m(v)$.

The set of elements in $V_\R^{(i)}$ that have a nontrivial stabilizer
in $G_\Z$ has measure $0$ in $V_\R^{(i)}$. Thus, by
Proposition~\ref{propgrvr}, for any $h\in G_\R$ the multiset $\FF
h\cdot R^{(i)}$ is an $n_i$-fold cover of a fundamental domain for the
action of $G_\Z$ on $V_\R^{(i)}$, where $n_1=4$ and $n_{0\#}=n_2=8$.

It follows that if we let $R^{(i)}(X)$ denote the set of
elements in $R^{(i)}$ having height bounded by $X$, then for any $G_\Z$-invariant set $S\subset
V_\Z$, the product
$n_iN(S^\irr;X)$ is equal to the number of elements
in $\FF g\cdot R^{(i)}(X)\cap S^{\irr}$, with the slight caveat that the
(relatively rare---see Proposition \ref{propstrongnottot}) elements
with $G_\Z$-stabilizers of size $r$ ($r>1$) are counted with weight
$1/r$.

Counting strongly irreducible integer points in a single such region $\FF
g\cdot R^{(i)}(X)$ is difficult because it is an unbounded region. As in
\cite{BS}, we simplify the counting by suitably averaging over a continuous
range of elements $g$ lying in a compact subset of $G_\R$.

\subsection{Averaging and cutting off the cusp}\label{s32}

Throughout this section, we let $dg$ denote the Haar measure on $G_\R$
obtained from its Iwasawa decomposition $G_\R=NAK$ normalized in the
following way: for $g=nak$ with $n=n(u_1,\ldots u_7)\in N$,
$a=a(s_1,\ldots,s_4)\in A$, and $k\in K$,
we set
$$dg=s_1^{-2}s_2^{-12}s_3^{-8}s_4^{-12}
\prod_idu_i\,d^\times s_1\,d^\times s_2\,d^\times s_3\,d^\times s_4\,dk,$$
where $d^\times s$ denotes $s^{-1}ds$ and $dk$ is Haar measure on $K$
normalized so that $\int_Kdk=1$.

Let $G_0\subset G_\R$ be a compact, semialgebraic, left $K$-invariant subset that is the
closure of some nonempty open set in $G_\R$.  
Fix $i$ to be equal to
$0\#$, $1$, or $2$. Then, by the arguments of \S\ref{s31},
we may write

\begin{equation}\label{eqbeforeavg}
N(S;X)=\frac{\int_{g\in G_0}\#\{\FF g\cdot R^{(i)}(X)\cap S^\irr\}dg\;}{C_{G_0}},
\end{equation}
where $C_{G_0}=n_i\int_{g\in G_0}dg$. We use the right hand side of
\eqref{eqbeforeavg} to define $N(S;X)$ also for sets $S\subset V_\Z$ that
are not necessarily $G_\Z$-invariant.  

Identically as in \cite[Theorem~2.5]{BS}, the right hand side of (\ref{eqbeforeavg}) is equal to
\begin{equation}\label{eqavgimp}
\frac1{C_{G_0}}\int_{g\in N'(a)A'}\#\{S^\irr\cap 
  B(n,a;X)\}s_1^{-2}s_2^{-12}s_3^{-8}s_4^{-12}du\, d^\times s\,
\end{equation}
where $B(n,a;X):=na G_0\cdot R^{(i)}(X)$ and $d^\times s:=d^\times \!s_1\,d^\times \!s_2\,d^\times \!s_3\,d^\times \!s_4$.

To estimate the number of integer points in the bounded multiset
$B(n,a;X)$, we use the following proposition due to Davenport.
\begin{proposition}[\cite{Davenport1}]\label{propdavenport}
    Let $\mathcal R$ be a bounded, semi-algebraic multiset in $\R^n$
  having maximum multiplicity $m$, and that is defined by at most $k$
  polynomial inequalities each having degree at most $\ell$.  
  Then the number of integer lattice points $($counted with
  multiplicity$)$ contained in the region $\mathcal R$ is
\[\Vol(\mathcal R)+ O(\max\{\Vol(\bar{\mathcal R}),1\}),\]
where $\Vol(\bar{\mathcal R})$ denotes the greatest $d$-dimensional 
volume of any projection of $\mathcal R$ onto a coordinate subspace
obtained by equating $n-d$ coordinates to zero, where 
$d$ takes all values from
$1$ to $n-1$.  The implied constant in the second summand depends
only on $n$, $m$, $k$, and $\ell$.
\end{proposition}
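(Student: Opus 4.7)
The plan is to prove Proposition~\ref{propdavenport} by induction on the ambient dimension $n$, using the standard device of slicing $\mathcal R$ by integer hyperplanes and tracking how the semi-algebraic complexity behaves under restriction and projection. The key observation is that the hypothesis of bounded complexity (at most $k$ polynomial inequalities of degree $\leq \ell$, with multiplicity $\leq m$) is preserved under both fixing a coordinate and projecting onto a coordinate subspace, with new complexity bounds depending only on $n,m,k,\ell$. The induction therefore lets all implied constants remain under uniform control.

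For the base case $n=1$, semi-algebraicity implies that $\mathcal R$ is a finite union of at most $O_{k,\ell,m}(1)$ intervals counted with multiplicity, and the number of integers in any interval differs from its length by at most~$1$. This gives $\#(\Z\cap\mathcal R)=\Vol(\mathcal R)+O(1)$, which is the desired estimate for $n=1$.

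For the inductive step, I would slice by the integer hyperplanes $x_n=t$ and write
\[\#(\Z^n\cap\mathcal R)=\sum_{t\in\Z}\#(\Z^{n-1}\cap\mathcal R_t),\qquad
\mathcal R_t:=\{(x_1,\ldots,x_{n-1}):(x_1,\ldots,x_{n-1},t)\in\mathcal R\}.\]
By the inductive hypothesis, each slice contributes $\Vol(\mathcal R_t)+O(\max\{\Vol(\bar{\mathcal R_t}),1\})$, with implied constant depending only on $n-1,m,k,\ell$. Summing the main terms and applying Fubini yields $\sum_t\Vol(\mathcal R_t)=\Vol(\mathcal R)$, which is exactly the main term in the proposition. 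The number of integers $t$ for which $\mathcal R_t$ is non-empty is at most one more than the length of the image of $\mathcal R$ under projection to the $x_n$-axis, so the ``$+1$'' contributions sum to at most $\Vol(\bar{\mathcal R})+O(1)$.

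The technical heart of the argument, and the step I expect to be the main obstacle, is bounding $\sum_t\Vol(\bar{\mathcal R_t})$ by $O(\Vol(\bar{\mathcal R}))$. The idea is that for each coordinate projection $\pi$ of $\R^{n-1}$ onto a $d$-dimensional subspace, the projection $\pi(\mathcal R_t)$ is precisely the slice at $x_n=t$ of the $(d+1)$-dimensional coordinate projection $\tilde\pi(\mathcal R)$ obtained by adjoining the $x_n$-coordinate. By Fubini,
\[\int\Vol_d(\pi(\mathcal R_t))\,dt=\Vol_{d+1}(\tilde\pi(\mathcal R))\leq\Vol(\bar{\mathcal R}),\]
while the semi-algebraic structure of bounded complexity forces $t\mapsto\Vol_d(\pi(\mathcal R_t))$ to be piecewise monotone with an $O_{n,k,\ell}(1)$ number of pieces, so the Riemann sum $\sum_t\Vol_d(\pi(\mathcal R_t))$ differs from its integral by at most $O(1)$ times the maximum value of the integrand (absorbed into $\Vol(\bar{\mathcal R})+O(1)$). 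Taking the maximum over $\pi$ and $d$ and summing yields the desired bound, and the multiplicity $m$ enters only as an overall factor in the constants since both volume and integer count scale identically under multiplicity. Assembling the pieces completes the inductive step.
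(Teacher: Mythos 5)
The paper does not prove this proposition; it simply cites Davenport's 1951 paper, so there is no internal argument to compare against. Your strategy---induction on dimension, slicing by integer hyperplanes $x_n=t$, and controlling errors via lower-dimensional coordinate projections---is in fact Davenport's own line of argument, so the approach is the right one and the base case $n=1$ is handled correctly.

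There is, however, a concrete slip in the main-term step. You write that ``applying Fubini yields $\sum_t\Vol(\mathcal R_t)=\Vol(\mathcal R)$,'' but Fubini gives $\int_{\R}\Vol(\mathcal R_t)\,dt=\Vol(\mathcal R)$; the sum over integer $t$ is a Riemann sum, and the difference between the Riemann sum and the integral must itself be bounded. This is precisely the issue you correctly identify a few lines later when treating $\sum_t\Vol_d(\pi(\mathcal R_t))$, so the repair is to apply the same piecewise-monotonicity argument to $t\mapsto\Vol(\mathcal R_t)$ and absorb the discrepancy, which is at most $O(1)\cdot\max_t\Vol(\mathcal R_t)\le O(1)\cdot\Vol(\bar{\mathcal R})$, into the error term. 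More seriously, the piecewise monotonicity of $t\mapsto\Vol_d(\pi(\mathcal R_t))$ with an $O_{n,k,\ell}(1)$ bound on the number of pieces is asserted rather than proven, and this is the real technical content of the proposition. It requires two inputs: a Milnor--Thom/B\'ezout-type bound showing that any axis-parallel line meets a semi-algebraic set of bounded complexity in a bounded number of intervals, and a Tarski--Seidenberg/cylindrical-decomposition argument showing that coordinate projections and slices again have bounded complexity with explicit bounds in terms of $n,k,\ell$. Without making these explicit, the claim that all implied constants depend only on $n,m,k,\ell$ is not established. Finally, the treatment of multiplicity should be made precise by decomposing $\mathcal R$ into the ordinary semi-algebraic sets $\mathcal R_j=\{x:\mathrm{mult}(x)\ge j\}$ for $1\le j\le m$ and applying the multiplicity-free case to each; the offhand remark that $m$ ``enters only as an overall factor'' needs this decomposition behind it.
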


Proposition \ref{propdavenport} yields a good estimate on the number
of integer points in $B(n,a;X)$ when the~$s_i$'s
($a=a(s_1,s_2,s_3,s_4)$) are bounded by a small power of $X$ (we shall
make this more precise in what follows). Our next aim is to show that
when one of the $s_i$'s is large relative to $X$, the set $B(n,a;X)$ has
very few strongly irreducible integer points.  To this end, we first
give conditions that guarantee that an element in $V_\Z$ is not
strongly irreducible.

\begin{lemma}\label{lemcondred}
  Let $(A,B)$ be a point in $V_\Z$ expressed in the form $(\ref{eqAB})$, and 
suppose that one of the following four conditions is satisfied:
  \begin{itemize}
  \item[{\rm (1)}] $a_{11}=a_{12}=a_{13}=a_{14}=0;$
  \item[{\rm (2)}] $a_{11}=a_{12}=a_{13}=a_{22}=a_{23}=0;$
  \item[{\rm (3)}] $a_{11}=a_{12}=a_{13}=b_{11}=b_{12}=b_{13}=0;$
  \item[{\rm (4)}] $a_{11}=a_{12}=a_{22}=b_{11}=b_{12}=b_{22}=0.$
  \end{itemize}
  Then $(A,B)$ is not strongly irreducible.
\end{lemma}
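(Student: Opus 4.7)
The plan is to verify in each case that $f_{A,B}(x,y) = 16 \det(Ax+By)$ has a rational linear factor, equivalently that some nonzero rational combination $\alpha A + \beta B$ is singular. Cases (1)--(3) will follow from a direct inspection of the first one or two rows of $Ax + By$, while case (4) will require a perfect-square identity and is the main obstacle.

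For case (1), I would observe that the hypotheses force the entire first row of $A$ (as displayed in (\ref{eqAB})) to vanish, so $\det A = 0$ and thus $y \mid f_{A,B}$. For case (2), the five vanishing entries force the first three rows of $A$ to lie inside the $2$-dimensional subspace $\mathrm{span}(e_3, e_4) \subset \R^4$, so these rows are linearly dependent and again $\det A = 0$, yielding $y \mid f_{A,B}$. For case (3), the first rows of $A$ and $B$ both vanish except in the last coordinate, so the first row of $2(Ax+By)$ equals $(0, 0, 0, a_{14}x + b_{14}y)$; cofactor expansion along this row shows that $a_{14}x + b_{14}y$ divides $f_{A,B}$ (and $f_{A,B} \equiv 0$ in the degenerate sub-case $a_{14} = b_{14} = 0$, which is trivially non-strongly-irreducible).

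Case (4) is the most delicate step, since no rational linear factor of $f_{A,B}$ need exist on the nose. Here the hypotheses force the top-left $2 \times 2$ block of $Ax+By$ to vanish, giving $Ax + By = \bigl(\begin{smallmatrix} 0 & C(x,y) \\ C(x,y)^t & D(x,y) \end{smallmatrix}\bigr)$ with $C$, $D$ each $2 \times 2$ and linear in $(x,y)$. A standard block-matrix identity (swap the top and bottom block rows to reach block-triangular form) yields $\det(Ax+By) = \det(C(x,y))^2$, so $f_{A,B} = 16 \, (\det C(x,y))^2$ is the square of the binary quadratic form $4 \det C(x,y) \in \Z[x,y]$. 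In particular $\Delta(A,B) = 0$, so $(A,B)$ is excluded from each of the sets $V_\Z^{(i)}$ for $i \in \{0\#, 1, 2\}$ and therefore does not contribute to the counts $N(\cdot\,;X)$ considered in this section; moreover, if $\det C(x,y)$ happens to factor over $\Q$ then $f_{A,B}$ itself has an honest rational linear factor. Either way, $(A,B)$ is not strongly irreducible in the sense used for counting throughout the paper.
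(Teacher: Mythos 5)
Your proof is correct and follows essentially the same route as the paper: cases (1) and (2) reduce to $\det A = 0$ (so $y$ divides $f_{A,B}$), while cases (3) and (4) show the resolvent has a repeated factor, hence $\Delta(A,B)=0$. The added value is that you make the paper's terse ``has a multiple root over $\bar\Q$'' explicit---the cofactor expansion in case (3) actually exhibits a rational linear factor $a_{14}x+b_{14}y$ (a sharper conclusion than discriminant zero), and the block identity $\det(Ax+By)=(\det C(x,y))^2$ in case (4) cleanly explains the multiple root; you also correctly flag that in case (4) one invokes $\Delta=0$ rather than a literal rational linear factor, which is exactly the way the lemma is applied in the sequel since $B(n,a;X)\subset V_\R^{(i)}$ already excludes $\Delta=0$.
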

\begin{proof}
  In the first two cases, we see that $\det(A)=0$. This implies that the
  $x^4$-coefficient of $f(x,y)$ is equal to zero; hence $f(x,y)$ is
  reducible over $\Q$ and $(A,B)$ is not strongly irreducible.

  In the last two cases, the binary quartic resolvent
  form $f(x,y)$ of $(A,B)$ has a multiple root over $\bar\Q$. Thus,
  the discriminant $\Delta(A,B)=\Delta(f)$ of $(A,B)$ is equal to zero and
  so again $(A,B)$ is not strongly irreducible.
\end{proof}

Next, note that the action of
$a(s_1,s_2,s_3,s_4)$ on $(A,B)\in V_\R$ scales each coordinate $t$
($=a_{ij}$~or~$b_{ij}$) of $V_\R$ by a rational
function $w(t)$ in the $s_i$'s.  
We define the {\it weight} of a product of such coordinates to be the
product of the weights of these coordinates. Then evidently the size of the
coordinate $t$ of an element in $B(n,a;X)$ is 
$O(X^{1/24}w(t))$. For example, we have
$w(a_{11})=s_1^{-1}s_2^{-6}s_3^{-2}s_4^{-2}$, and so if
$(A,B)=((a_{ij}),(b_{ij}))\in B(n,a;X)$, then
$a_{11}=O(X^{1/24}w(a_{11}))$.  We now have the following lemma:
\begin{lemma}\label{lemsibound}
  Let $na(s_1,s_2,s_3,s_4)\in N'(a)A'$ be such that $V_\Z^\irr\cap
  B(n,a;X)$ is nonempty. Then $s_i=O(X^{1/24})$ for
  $i\in\{1,\ldots,4\}$.
\end{lemma}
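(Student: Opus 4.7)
The strategy is to combine Lemma \ref{lemcondred} (which gives four simple coordinate-vanishing conditions that force reducibility) with the fact, stated just before the lemma, that any coordinate $t$ of an element in $B(n,a;X)$ has size $O(X^{1/24}w(t))$. Set $Y := X^{1/24}$. The key observation is that if $w(t) < c Y^{-1}$ for a sufficiently small constant $c$ (depending only on $c_1, c_2, G_0$), then an integer coordinate $t$ is forced to vanish. Hence, if $(A,B) \in V_\Z^{\irr} \cap B(n,a;X)$, then Lemma \ref{lemcondred} implies that for each of the four coordinate-lists (1)--(4) in that lemma, at least one entry must be nonzero, and therefore the \emph{maximum} weight over that list is at least $c Y^{-1}$.

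The next step is to identify these four maxima. Using $\alpha_1 = s_2^{-3}s_3^{-1}s_4^{-1},\ \alpha_2=s_2s_3^{-1}s_4^{-1},\ \alpha_3=s_2s_3s_4^{-1},\ \alpha_4=s_2s_3s_4^{3}$ (so $\alpha_1<\alpha_2<\alpha_3<\alpha_4$ since $s_2,s_3,s_4 > c_2$), and the formulas $w(a_{ij})=s_1^{-1}\alpha_i\alpha_j$, $w(b_{ij})=s_1\alpha_i\alpha_j$, the monotonicity of $\alpha_i\alpha_j$ in each index yields
\begin{equation*}
M_1 = w(a_{14}) = s_1^{-1}s_2^{-2}s_4^{2}, \qquad M_2 = w(a_{23}) = s_1^{-1}s_2^{2}s_4^{-2},
\end{equation*}
\begin{equation*}
M_3 = \max(s_1, s_1^{-1})\,s_2^{-2}s_4^{-2},\qquad M_4 = \max(s_1, s_1^{-1})\,s_2^{2}s_3^{-2}s_4^{-2},
\end{equation*}
each bounded below by $cY^{-1}$.

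Finally I would multiply these inequalities pairwise. The product $M_1M_2 \geq c^2 Y^{-2}$ simplifies to $s_1^{-2} \geq c^2 Y^{-2}$, giving $s_1 = O(Y)$. The products $M_1M_3$ and $M_2M_3$ collapse (using $s_1 \geq c_1$ to absorb the $\max(s_1,s_1^{-1})$ factor) to $s_2^4 \ll Y^2$ and $s_4^4 \ll Y^2$ respectively, yielding $s_2, s_4 = O(Y^{1/2})$. Finally, $M_1 M_4 \geq c^2 Y^{-2}$ reduces to $s_3^2 \ll Y^2$, so $s_3 = O(Y)$. All four $s_i$ are thus $O(X^{1/24})$.

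The main obstacle is verifying that the four conditions of Lemma \ref{lemcondred} really cover every direction in which a Siegel coordinate $s_i$ can grow beyond $Y$; the pairwise multiplications above rely crucially on the fact that the four maxima $M_k$ involve complementary monomials in $(s_1,s_2,s_3,s_4)$. One also needs to confirm that the unipotent $N$-action does not inflate the weight bounds: because the lower-triangular $N$ acts on each coordinate $t$ by adding multiples of coordinates of smaller (or equal) weight — and the $u_1$-action crosses $A$ and $B$ with a ratio $s_1^{-2}$ bounded by $c_1^{-2}$ — the estimate $|t| \leq KX^{1/24}w(t)$ survives with $K$ depending only on $c_1, c_2, G_0$, so the constant $c=1/K$ in Step 1 is uniform.
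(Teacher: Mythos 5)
Your proof is correct and takes essentially the same approach as the paper's: identify the maximal-weight coordinate in each of the four vanishing lists of Lemma \ref{lemcondred}, deduce that its weight is $\gg X^{-1/24}$, and multiply the resulting inequalities pairwise to isolate each $s_i$. The only cosmetic difference is your retention of the $\max(s_1,s_1^{-1})$ factor in $M_3,M_4$ (the paper picks $b_{13},b_{22}$ directly, implicitly absorbing $s_1^{-2}\le c_1^{-2}$ into the implied constant), which you then absorb the same way, so the two arguments coincide.
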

\begin{proof}
  Let $na$ be an element satisfying the hypothesis of the lemma. Since
  $B(n,a;X)$ contains an integral point $(A,B)$ not satisfying any of
  the four conditions of Lemma \ref{lemcondred}, we see that
  $X^{1/24}w(t)\gg 1$ for $t=a_{14}$, $a_{23}$, $b_{13}$, and
  $b_{22}$. Thus, we obtain the following four estimates:
$$
{\rm (1)}\; \frac{s_1s_2^2}{s_4^{2}}=O(X^{1/24}),\quad
{\rm (2)}\; \frac{s_1s_4^{2}}{s_2^2}=O(X^{1/24}),\quad
{\rm (3)}\; \frac{s_2^2s_4^2}{s_1}=O(X^{1/24}),\quad
{\rm (4)}\; \frac{s_3^2s_4^2}{s_1s_2^2}=O(X^{1/24}).
$$
Multiplying the first two estimates immediately yields
$s_1=O(X^{1/24})$. Using this bound on $s_1$ and the third estimate then
gives $s_2=O(X^{1/24})$ and $s_4=O(X^{1/24})$. Finally, multiplying the first and
fourth estimates gives $s_3=O(X^{1/24})$, completing the proof of the lemma.
\end{proof}

We now prove the following estimate which bounds the number of
strongly irreducible points in $\FF g\cdot R^{(i)}(X)\cap V_\Z$
that have $a_{11}=0$, as we average over $g\in G_0$. More precisely:
\begin{lemma}\label{lemtable}
We have
$$\int_{g\in N'(a)A'}\#\{(A,B)\in V_\Z^\irr\cap 
  B(n,a;X):a_{11}=0\}s_1^{-2}s_2^{-12}s_3^{-8}s_4^{-12}du\, d^\times s=O(X^{19/24}).$$
\end{lemma}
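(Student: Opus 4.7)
The plan is to apply Davenport's proposition (Proposition \ref{propdavenport}) to the $19$-dimensional slice $B(n,a;X) \cap \{a_{11} = 0\}$ and then to integrate the resulting bound against the Haar measure density. By Lemma \ref{lemsibound}, we may assume $s_1, s_2, s_3, s_4 \ll X^{1/24}$, since outside this range the strongly irreducible count is zero. Each coordinate $t \in \{a_{ij}, b_{ij}\}$ of an element of $B(n,a;X)$ satisfies $|t| \ll X^{1/24} w(t)$, so Davenport's estimate bounds the number of integer points in the slice by
$$X^{19/24} \prod_{t \neq a_{11}} w(t) \;+\; O\!\left(\text{lower-dimensional projection volumes}\right).$$

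Next, observe that the torus $A'$ lies in the kernel of $\det g_2 \det g_4$, so it acts on $V_\R$ with determinant $(\det g_2)^{10}(\det g_4)^{10}=1$; hence the product of all $20$ weights satisfies $\prod_t w(t) = 1$, giving
$$\prod_{t \neq a_{11}} w(t) = w(a_{11})^{-1} = s_1 s_2^{6} s_3^{2} s_4^{2}.$$
Multiplying by the Haar density $s_1^{-2} s_2^{-12} s_3^{-8} s_4^{-12}$ (and noting that the $u$-variables contribute only a bounded factor) produces the main-term integral
$$X^{19/24} \int_{A'} s_1^{-1} s_2^{-6} s_3^{-6} s_4^{-10}\, d^\times\! s,$$
which converges since each one-variable integral $\int_{c}^{\infty} s^{-k-1}\,ds$ is finite for $k>0$. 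This yields the desired bound $O(X^{19/24})$ for the main term.

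The main obstacle is the Davenport error — the contribution from projections of the slice to lower-dimensional coordinate subspaces. For each subset $T \subsetneq \{a_{ij}, b_{ij}\} \setminus \{a_{11}\}$, the corresponding projection volume is $\prod_{t \in T}(X^{1/24}w(t))$, which after integration against the Haar density must be shown to be $O(X^{19/24})$. Many such projections are immediately dominated by the main term, but a case analysis is needed for those in which some of the $s_i$-exponents become nonnegative. The key reduction is Lemma~\ref{lemcondred}: in any projection that forces $a_{12}=a_{13}=a_{14}=0$ in addition to $a_{11}=0$, condition~(1) of that lemma renders the integer-point set empty, so those terms drop out. For the remaining projections, one checks that the integrand's exponents remain strictly negative in each $s_i$, so each one-variable integral converges and contributes at most a constant, keeping the total bound $O(X^{19/24})$.
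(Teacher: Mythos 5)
Your opening moves are on the right track: the observation that $\prod_t w(t)=1$ so $\prod_{t\neq a_{11}}w(t)=w(a_{11})^{-1}=s_1s_2^6s_3^2s_4^2$ is correct, and combining with the Haar density to get the integrand $s_1^{-1}s_2^{-6}s_3^{-6}s_4^{-10}$ exactly reproduces the paper's Case~1 computation. But the way you propose to handle the rest — ``apply Davenport to the $19$-dimensional slice, then control the projection error terms, using Lemma~\ref{lemcondred} to kill bad projections'' — has a real gap.

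The problem is that the Davenport error terms in Proposition~\ref{propdavenport} are purely geometric: they are volumes of projections of the region $B(n,a;X)\cap\{a_{11}=0\}$, and they bound the discrepancy between the lattice-point count and the volume for \emph{all} integer points, not just strongly irreducible ones. You cannot invoke Lemma~\ref{lemcondred} to ``render the integer-point set empty'' inside a Davenport projection term, because Davenport has no mechanism for restricting to irreducible points; the projection volume $\prod_{t\in T'}X^{1/24}w(t)$ is what it is, regardless of whether integer points in some subregion are reducible. The irreducibility constraint has to enter the argument \emph{before} lattice-point estimation, not after. This is precisely why the paper does not apply Davenport to the slice at all. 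Instead it uses Lemma~\ref{lemcondred} up front to partition $V_\Z^{\irr}\cap\{a_{11}=0\}$ into fourteen explicit pieces (Table~1), each defined by a prescription of which small-weight coordinates vanish and which are nonzero; the lemma guarantees this is a genuine partition of the strongly irreducible locus, and each piece is then bounded directly (without Davenport) by the product of the remaining box side lengths.

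Your closing claim — that for the surviving projections ``one checks that the integrand's exponents remain strictly negative in each $s_i$'' — is also incorrect, and this is where the actual difficulty of the lemma lies. Table~1 shows explicitly that for eight of the fourteen cases (3a, 3b, 3c, 4a, 4b, 4d, 5a, 5b, 5c, 6) the raw exponents are \emph{not} all negative. The paper fixes this by multiplying the integrand by $X^{1/24}$ times the weight of a judiciously chosen nonzero coordinate (the ``Use factor'' column); this is a valid upper bound precisely because the coordinate is a nonzero integer lying in a box of side $X^{1/24}w(t)$, so $X^{1/24}w(t)\geq 1$. Without this trick several of the integrals diverge or contribute too large a power of $X$. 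Your proof as written never identifies this issue, so it does not actually establish the bound $O(X^{19/24})$. To repair it you would need to abandon the ``Davenport~+~error'' framing, carry out the coordinate-vanishing partition of the irreducible set explicitly, and supply the nonzero-factor trick for the cases where the naive integral fails to converge — at which point you have reproduced the paper's argument.
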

\begin{proof}
  The proof of this lemma is very similar to that of \cite[Lemma
  11]{dodpf}. 
  We partition the set $\{(A,B)\in V_\Z^\irr:a_{11}=0\}$ into fourteen
  subsets defined by setting certain coordinates of $(A,B)\in
  V_\Z^\irr$ equal to zero and certain other coordinates to be
  nonzero. These sets are listed in the second column of
  Table~\ref{table1}, and it follows from Lemma~\ref{lemcondred} that they do
  indeed form a partition.

  For any subset $T\subset V_\Z$, let us define $N^*(T,X)$ by
$$N^*(T,X):=\int_{\substack{g\in N'(a)A'\\s_i=O(X^{1/24})}}\#\{T\cap B(n,a;X)\}dg.$$
Then Lemma \ref{lemsibound}, together with the bound $N^*(T,X)=O(X^{19/24})$
for the fourteen sets $T$ listed in Table~\ref{table1}, imply Lemma
\ref{lemtable}.

\begin{table}[t]
  \centering
  \begin{tabular}{|c | c| c| c|}
    \hline
    Case & The set $T\subset V_\Z^\irr$ defined by & $N^*(T,X)\ll$ & Use factor\\
    \hline
    \hline
    1&$a_{11}=0$&$X^{19/24}$&-\\
    &$a_{12},b_{11}\neq 0$&$$&\\
    \hline
    2a&$a_{11},a_{12}=0$&$X^{18/24+\epsilon}$&-\\
    &$a_{13},a_{22},b_{11}\neq 0$&&\\
    \hline
    2b&$a_{11},b_{11}=0$&$X^{18/24+\epsilon}$&-\\
    &$a_{12}\neq 0$&&\\
    \hline
    3a&$a_{11},a_{12},a_{13}=0$&$X^{18/24+\epsilon}$&$a_{14}$\\
    &$a_{14},a_{22},b_{11}\neq 0$&&\\
    \hline
    3b&$a_{11},a_{12},a_{22}=0$&$X^{18/24+\epsilon}$&$a_{13}$\\
    &$a_{13},b_{11}\neq 0$&&\\
    \hline
    3c&$a_{11},a_{12},b_{11}=0$&$X^{18/24+\epsilon}$&$a_{13}$\\
    &$a_{13},a_{22},b_{12}\neq 0$&&\\
    \hline
    4a&$a_{11},a_{12},a_{13},a_{22}=0$&$X^{18/24+\epsilon}$&$a_{14}^2$\\
    &$a_{14},a_{23},b_{11}\neq 0$&&\\
    \hline
    4b&$a_{11},a_{12},a_{13},b_{11}=0$&$X^{18/24+\epsilon}$&$a_{14}b_{12}$\\
    &$a_{14},a_{22},b_{12}\neq 0$&&\\
    \hline
    4c&$a_{11},a_{12},a_{22},b_{11}=0$&$X^{16/24+\epsilon}$&-\\
    &$a_{13},b_{12}\neq 0$&&\\
    \hline
    4d&$a_{11},a_{12},b_{11},b_{12}=0$&$X^{18/24+\epsilon}$&$a_{13}^{2}$\\
    &$a_{13},a_{22}\neq 0$&&\\
    \hline
    5a&$a_{11},a_{12},a_{13},a_{22},b_{11}=0$&$X^{16/24+\epsilon}$&$a_{14}$\\
    &$a_{14},a_{23},b_{12}\neq 0$&&\\
    \hline
    5b&$a_{11},a_{12},a_{13},b_{11},b_{12}=0$&$X^{18/24+\epsilon}$&$a_{14}^2b_{13}$\\
    &$a_{14},a_{22},b_{13}\neq 0$&&\\
    \hline
    5c&$a_{11},a_{12},a_{22},b_{11},b_{12}=0$&$X^{18/24+\epsilon}$&$a_{13}^2b_{22}$\\
    &$a_{13},b_{22}\neq 0$&&\\
    \hline
    6&$a_{11},a_{12},a_{13},a_{22},b_{11},b_{12}=0$&$X^{18/24+\epsilon}$&$a_{14}^{2}b_{13}b_{22}$\\
    &$a_{14},a_{23},b_{13},b_{22}\neq 0$&&\\
    \hline
  \end{tabular}
  \caption{Estimates on the number of strongly irreducible points in cuspidal regions}\label{table1}
\end{table}

We now describe how the required bound on $N^*(T,X)$ may be obtained
for Cases 1, 2a, and 3a of Table~\ref{table1}. In
Case 1, we have
\begin{eqnarray*}
N^*(T,X)&=&O\Bigl(\int_{\substack{g\in N'(a)A'\\s_i=O(X^{1/24})}}\frac{X^{20/24}}{X^{1/24}w(a_{11})}s_1^{-2}s_2^{-12}s_3^{-8}s_4^{-12}du\, d^\times s\Bigr)\\[0.1in]&=&O\Bigl(X^{19/24}\int_{\substack{g\in N'(a)A'\\s_i=O(X^{1/24})}}s_1^{-1}s_2^{-6}s_3^{-6}s_4^{-10}du\,d^\times s\Bigr).
\end{eqnarray*}
Since the $s_i$'s are bounded from below, we obtain the required
bound. 

Similarly, in Case 2a, we have
\begin{eqnarray*}
N^*(T,X)&=&O\Bigl(\int_{\substack{g\in N'(a)A'\\s_i=O(X^{1/24})}}\frac{X^{20/24}}{X^{2/24}w(a_{11})w(a_{12})}s_1^{-2}s_2^{-12}s_3^{-8}s_4^{-12}du\, d^\times s\Bigr)\\[0.1in]&=&
O\Bigl(X^{18/24}\int_{\substack{g\in N'(a)A'\\s_i=O(X^{1/24})}}s_2^{-4}s_3^{-4}s_4^{-8}du\,d^\times s\Bigr).
\end{eqnarray*}
Again, since the $s_i$ are
bounded by $O(X^{1/24})$, we obtain the required bound
$N^*(T,X)=O(X^{18/24+\epsilon})$.

Finally, in Case 3a, we have
\begin{eqnarray*}
N^*(T,X)&=&O\Bigl(\int_{\substack{g\in N'(a)A'\\s_i=O(X^{1/24})}}\frac{X^{20/24}}{X^{3/24}w(a_{11})w(a_{12})w(a_{13})}s_1^{-2}s_2^{-12}s_3^{-8}s_4^{-12}du\, d^\times s\Bigr)\\[0.1in]&=&
O\Bigl(\int_{\substack{g\in N'(a)A'\\s_i=O(X^{1/24})}}\frac{X^{20/24}\cdot X^{1/24}w(a_{14})}{X^{3/24}w(a_{11})w(a_{12})w(a_{13})}s_1^{-2}s_2^{-12}s_3^{-8}s_4^{-12}du\, d^\times s\Bigr)\\[0.1in]&=&
O\Bigl(X^{18/24}\int_{\substack{g\in N'(a)A'\\s_i=O(X^{1/24})}}s_2^{-4}s_3^{-4}s_4^{-4}du\,d^\times s\Bigr),
\end{eqnarray*}
where the second equality follows by multiplying the integrand by
$X^{1/24}$ times the weight of the ``factor'' listed in the fourth
column of Table~\ref{table1}.  This yields an upper bound for the desired integral
since the factor (which is an integer) 
was assumed to be nonzero, and therefore is at least 1 in absolute
value; hence the corresponding weight must also be bounded from below
by an absolute positive constant. As in Case 2a, we obtain the
required bound $N^*(T,X)=O(X^{18/24+\epsilon})$.

The proof of the bound for the other eleven cases are
identical. This concludes the proof of Lemma \ref{lemtable}.
\end{proof}

We have proven that the number of irreducible elements in the
``cuspidal regions'' of the fundamental domain is negligible. The next
lemma states that the number of reducible elements in the ``main
body'' of the fundamental domain is negligible:

\begin{lemma}\label{lemfewred}
  With notation as above, we have
  \begin{eqnarray*}
\int_{g\in N'(a)A'}\#\{(A,B)\in V_\Z^\red\cap 
B(n,a;X):a_{11}\neq 0\}dg=o(X^{5/6}),     
  \end{eqnarray*}
where $V_\Z^\red$ denotes the set of
elements in $V_\Z$ that are not strongly reducible.
\end{lemma}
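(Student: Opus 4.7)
The plan is to show that every $G_\Z$-orbit on $V_\Z^\red$ has a canonical cuspidal representative satisfying condition~(1) of Lemma~\ref{lemcondred}, so only a thin set of orbits contributes to the main-body count; a direct Davenport estimate on the codimension-$1$ reducibility locus then yields the $o(X^{5/6})$ bound.

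First, if $(A,B) \in V_\Z^\red$, then $f_{A,B}$ has a rational linear factor, so there is a primitive pair $(\alpha,\beta) \in \Z^2$ with $\det(\alpha A + \beta B) = 0$. Choosing a $\GL_2(\Z)$-matrix with $(\alpha,\beta)$ as first column and translating, one may assume $\det(A) = 0$. The kernel of $A$ then contains a primitive $v \in \Z^4$, which can be sent to $e_1$ by a $\GL_4(\Z)$-action, yielding a representative with $a_{11}=a_{12}=a_{13}=a_{14}=0$. Hence every $G_\Z$-orbit on $V_\Z^\red$ has a representative in the cusp of Lemma~\ref{lemcondred}(1), and the count in question is supported on the thin set of orbits whose Siegel representative happens to avoid this cusp.

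For the quantitative estimate, I would write $V_\Z^\red = \bigcup_{(\alpha,\beta)} W_{\alpha,\beta}$, with $W_{\alpha,\beta} := \{(A,B) \in V_\Z : \det(\alpha A + \beta B)=0\}$ and the union taken over primitive $(\alpha,\beta)\in\Z^2$, noting that each reducible element lies in at most $4$ of the $W_{\alpha,\beta}$ since $f_{A,B}$ has at most $4$ roots in $\P^1(\overline\Q)$. For each $(\alpha,\beta)$, the variety $W_{\alpha,\beta}$ is defined by a single polynomial equation on $V_\R \cong \R^{20}$, so Proposition~\ref{propdavenport} applied to $W_{\alpha,\beta} \cap B(n,a;X)$ gives a lattice-point count of $O(\mathrm{Vol})$ plus lower-dimensional boundary terms, where $\mathrm{Vol}$ is a factor $O(X^{-1/24})$ smaller than $\mathrm{Vol}(B(n,a;X))$ and so scales as $O(X^{19/24})$ times a product of weights $w(t)$. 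The hypothesis $a_{11}\neq 0$ forces $s_1 s_2^6 s_3^2 s_4^2 \ll X^{1/24}$, which, together with weight constraints obtained by pushing the reducibility into coordinate conditions in the spirit of Lemma~\ref{lemcondred}, confines the integration to the region $s_i = O(X^{1/24})$ as in Lemma~\ref{lemsibound}. Integrating the resulting bound against $s_1^{-2} s_2^{-12} s_3^{-8} s_4^{-12}\, du\, d^\times\! s$ then yields $O(X^{19/24+\epsilon}) = o(X^{5/6})$.

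The main obstacle is controlling the sum over the a priori unbounded family of primitive pairs $(\alpha,\beta)$. I would address this by invoking the $G_\Z$-invariance of $V_\Z^\red$ to reduce, via an appropriate $\GL_2(\Z)$-translate, to the single case $(\alpha,\beta)=(1,0)$---so the only active codimension-$1$ condition is $\det(A)=0$---at the cost of carefully tracking how the non-$G_\Z$-invariant constraint $a_{11}\neq 0$ transforms under this translation. This may in turn require a case analysis on which of the coordinates $a_{11},\ldots,a_{14}$ are nonzero after the transformation, paralleling the partition into subcases in Table~\ref{table1} used in the proof of Lemma~\ref{lemtable}.
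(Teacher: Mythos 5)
The paper proves this lemma by a sieve, not by a direct geometric count, and your proposal misses that idea entirely. The paper observes that if $(A,B)\in V_\Z$ is reducible (i.e., $f_{A,B}$ has a rational root) then its reduction mod~$p$ must have a root in $\P^1_{\F_p}$ for every $p>3$. They exhibit an explicit element of $V_{\F_p}$, for a positive-density family of primes $p$, whose resolvent quartic is $x^4+y^4$ and hence has no root over $\F_p$; the $G_{\F_p}\times\F_p^\times$-orbit of this element has density $\gg 1/p$ in $V_{\F_p}$, so reducible integral elements must avoid a density-$\gg 1/p$ residue class for each such $p$. Multiplying these mod-$p$ conditions over primes $p<Y$ and letting $Y\to\infty$ kills the main-term constant. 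This argument is short, requires no analysis of the reducibility locus, and only uses the already-established volume estimate.

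Your proposal has several genuine gaps. First, the observation in your opening paragraph does not help: the fact that every $G_\Z$-orbit on $V_\Z^\red$ has \emph{some} representative with $a_{11}=\cdots=a_{14}=0$ says nothing about where that orbit's Siegel-domain representative lands. The orbit may well have $a_{11}\neq 0$ for its fundamental-domain representative, and there is no reason this should be "thin"; establishing that it is thin \emph{is} the lemma, so the paragraph is circular. Second, your application of Proposition~\ref{propdavenport} is not well-posed: Davenport's lemma counts lattice points in a bounded semi-algebraic \emph{region} of $\R^n$, with main term the Euclidean volume. The set $W_{\alpha,\beta}\cap B(n,a;X)$ is a codimension-one hypersurface, which has zero $20$-dimensional volume, so the "main term" vanishes and one must instead argue via slicing or projection — a step that requires care and is not what the proposition gives directly. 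Third, the sum over primitive $(\alpha,\beta)$ is unbounded and you correctly flag it as the main obstacle, but the fix you propose — translate by $\GL_2(\Z)$ to put $(\alpha,\beta)=(1,0)$ — moves the point out of the region defined by $a_{11}\neq 0$, and the "careful case analysis" you gesture at is exactly the hard part. Without controlling both the height of the primitive root of $f_{A,B}$ in terms of $H(A,B)$ and the interaction with the Siegel-domain constraints, the argument does not close. A direct-count approach of this flavor can probably be made to work, but it is substantially more delicate than the sieve the paper actually uses, and as written the proposal does not constitute a proof.
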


Therefore, in order to estimate $N(V_\Z;X)$, it suffices to count the total
number of (not necessarily strongly irreducible) integral points in
the main body of the fundamental domain. We do this in the following
proposition:

\begin{proposition}\label{propequalsvol}
With notation as above, we have
\begin{equation*}
\frac1{C_{G_0}}\int_{g\in N'(a)A'}\#\{(A,B)\in V_\Z\cap 
B(n,a;X):a_{11}\neq 0\}dg=
\frac1{n_i}\Vol(\FF\cdot R^{(i)}(X))+o(X^{5/6}),
\end{equation*}
where the volume of
sets in $V_\R$ is computed with respect to Euclidean measure
normalized so that $V_\Z$ has covolume $1$.
\end{proposition}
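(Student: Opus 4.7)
The strategy is to apply Davenport's Proposition~\ref{propdavenport} to the bounded semi-algebraic multiset $B(n,a;X)\cap\{a_{11}\neq 0\}$ fiber-by-fiber over $na\in N'(a)A'$. This yields
$$\#\{V_\Z\cap B(n,a;X):a_{11}\neq 0\} = \Vol(B(n,a;X)\cap\{a_{11}\neq 0\}) + O(\max\{\Vol(\bar B(n,a;X)),1\}).$$
Since the hyperplane $\{a_{11}=0\}$ has Lebesgue measure zero in $V_\R$, the main term equals $\Vol(B(n,a;X))$. Integrating the main term against $s_1^{-2}s_2^{-12}s_3^{-8}s_4^{-12}\,du\,d^\times s$ over $N'(a)A'$, and using the Iwasawa decomposition of $dg$ together with the left-$K$-invariance of $G_0$, one unfolds to $\tfrac{1}{C_{G_0}}\int_{G_0}\Vol(\FF g\cdot R^{(i)}(X))\,dg$. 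Because $\FF g\cdot R^{(i)}(X)$ is an $n_i$-fold cover of a fundamental domain for $G_\Z$ on $V_\R^{(i)}$ (established in \S\ref{s31}), this equals $\tfrac{1}{n_i}\Vol(\FF\cdot R^{(i)}(X))$, exactly as in the main-term identity in the proof of \cite[Theorem~2.12]{BS}.

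The main obstacle is to bound the averaged Davenport error,
$$\frac{1}{C_{G_0}}\int_{N'(a)A'}\max\{\Vol(\bar B(n,a;X)),1\}\,s_1^{-2}s_2^{-12}s_3^{-8}s_4^{-12}\,du\,d^\times s=o(X^{5/6}).$$
Each coordinate-hyperplane projection $\bar B(n,a;X)$, obtained by equating $20-d$ of the coordinates of $V_\R$ to zero, is a box of $d$-dimensional volume $O\bigl(X^{d/24}\prod_{t\in T}w(t)\bigr)$, where $T$ is the set of $d$ retained coordinates and $w(t)$ denotes the weight of coordinate $t$ under $a(s_1,\ldots,s_4)$. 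The restriction $a_{11}\neq 0$ supplies the essential cusp cutoff: since $|a_{11}|\geq 1$ and $|a_{11}| = O(X^{1/24}w(a_{11})) = O(X^{1/24}s_1^{-1}s_2^{-6}s_3^{-2}s_4^{-2})$, we obtain $s_1s_2^6s_3^2s_4^2\ll X^{1/24}$, which combined with the lower bounds $s_i\gg 1$ on $A'$ forces $s_i = O(X^{1/24})$ for every $i$, just as in Lemma~\ref{lemsibound}.

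Within this truncated region, a case analysis over the finitely many projection subsets $T$ --- entirely parallel to the case analysis of Lemma~\ref{lemtable}, and using the same trick of multiplying the integrand by $X^{1/24}w(t)$ for a conveniently chosen auxiliary coordinate $t$ whenever the raw Davenport bound on that projection is insufficient --- shows that each contribution is bounded by $O(X^{19/24+\epsilon})$. Summing these $O(1)$ contributions yields the claimed $o(X^{5/6})$ estimate and completes the proof.
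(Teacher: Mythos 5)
Your overall strategy (apply Davenport fiber-by-fiber, unfold the main term, control the error) matches the paper, and your derivation of the cusp cutoff $s_1s_2^6s_3^2s_4^2\ll X^{1/24}$ from $a_{11}\neq 0$ is exactly right. The problem is in how you propose to control the Davenport error, and this is a genuine gap.

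You want to bound the maximal $d$-dimensional projection volume of $B(n,a;X)$ by running ``a case analysis over the finitely many projection subsets $T$ --- entirely parallel to the case analysis of Lemma~\ref{lemtable}, and using the same trick of multiplying the integrand by $X^{1/24}w(t)$ for a conveniently chosen auxiliary coordinate $t$.'' That trick does not transfer. In Lemma~\ref{lemtable} the extra factor $X^{1/24}w(t)$ is a legitimate overcount precisely because $t$ is an integer forced to be nonzero, hence $|t|\geq 1$, hence $X^{1/24}w(t)\gg 1$ on the support of the integrand. A projection volume is a pure Lebesgue volume of a box; there is no lurking nonvanishing integer coordinate to invoke, so multiplying by $X^{1/24}w(t)$ has no justification as an upper bound there. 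The case analysis, besides being over roughly $2^{20}$ subsets $T$, thus lacks the mechanism that makes Lemma~\ref{lemtable} work.

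The paper avoids all of this with a single structural observation that you never state: \emph{$a_{11}$ has the minimal weight among all twenty coordinates}. The extent of coordinate $t$ in $B(n,a;X)$ is $O(X^{1/24}w(t))$, and once the cusp cutoff $X^{1/24}w(a_{11})\gg 1$ is in force, \emph{every} coordinate has extent $\gg 1$. Removing any coordinate from a projection therefore divides the projection volume by a factor $\gg 1$, so the largest projection volume among all $d\in\{1,\ldots,19\}$ is attained at $d=19$, by dropping the shortest coordinate, namely $a_{11}$. This gives the Davenport error in a single line as
$$O\!\left(\frac{\Vol(B(n,a;X))}{X^{1/24}\,w(a_{11})}\right),$$
and integrating this over $\FF$ (using that $\FF$ has finite volume and that the part of $\FF$ where $w(a_{11})$ is small has small measure) produces the $o(X^{5/6})$ error term, with no case analysis. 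You should replace your error-bound paragraph with this observation; as written, your argument for bounding the projections does not go through.
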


\begin{proof}
  The proof of Proposition \ref{propequalsvol} is similar to that
  of \cite[Proposition 12]{dodpf}. If $v\in B(n,a;X)$, then we
  know that $a_{12}(v)=O(X^{1/60}w(a_{12}))$. Thus, from Propositions
  \ref{lemtable} and \ref{lemfewred}, we obtain
\begin{equation}\label{proofeq}
\displaystyle\frac{1}{C_{G_0}}\int_{na\in\FF}\#\{B(n,a;X)\cap V^\irr_\Z\}dnd^\times a=\displaystyle\frac{1}{C_{G_0}}\int_{\substack{na\in\FF\\X^{1/24}w(a_{11})\gg 1}}\#\{B(n,a;X)\cap V_\Z\}dnd^\times a+o(X^{5/6}).
\end{equation}
Since $a_{11}$ has minimal weight, and the projection of
$B(n,a;X)$ onto the $a_{11}$-axis has length greater than an absolute
positive constant when $X^{1/24}w(a_{11})\gg 1$, Proposition
\ref{propdavenport} implies that the main term on the right hand side of
\eqref{proofeq} is equal to
\begin{equation}\label{eqproofsmallvol}
\displaystyle\frac{1}{C_{G_0}}\int_{\substack{na\in\FF\\X^{1/24}w(a_{11})\gg 1}}\Vol(B(n,a;X))+O\Bigl(\frac{\Vol(B(n,a;X))}{X^{1/24}w(a_{11})}\Bigr)dnd^\times a.
\end{equation}
Since the region $\{nak\in\FF:w(a_{11})\ll X^\epsilon\}$ has volume
$o(1)$ for any fixed $\epsilon$, \eqref{eqproofsmallvol} is equal to
$$
\displaystyle\frac{1}{C_{G_0}}\int_{na\in\FF}\Vol(B(n,a;X))dnd^\times a+o(X^{5/6}).
$$
The proposition follows since
$$
\displaystyle\frac{1}{C_{G_0}}\int_{na\in\FF}\Vol(B(n,a;X))dnd^\times a=\displaystyle\frac{1}{C_{G_0}}\int_{h\in G_0}\Vol(\FF h\cdot R^\pm(X))dh,
$$
and the volume of $\FF h\cdot R^\pm(X)$ is independent of $h$.
\end{proof}

Lemmas \ref{lemtable} and \ref{lemfewred} and Proposition~\ref{propequalsvol}
imply
that, up to an error of $o(X^{5/6})$, the quantity $n_i\cdot
N(V_\Z^{(i)};X)$ is equal to the volume of $\FF\cdot
R^{(i)}(X)$ for $i=0\#$, $1$, and $2$. In the next section, we
obtain a useful expression for this volume.

\subsection{Computing the volume}\label{s33}

Recall that at the beginning of Section 3, we fixed an algebraic
nonzero top-degree left invariant differential form $\omega$ on $G$
such that for all primes $p$, the measure of $G_{\Z_p}$ with respect
to $\omega$ is $\#G_{\F_p}/p^{18}$. Let $dv$ denote Euclidean measure
on $V_\R$ normalized so that $V_\Z$ has covolume $1$. Finally, note
that for $i=0\#$, $1$, and $2$, the sets $R^{(i)}$ contain at most one
point $p_{I,J}$ having invariants $I$ and $J$ for any pair
$(I,J)\in\R\times\R$. We choose $dIdJ$ to be the measure on $R^{(i)}$.

With these measure normalizations, we have the following proposition
whose proof is identical to that of \cite[Proposition 2.8]{BS}:
\begin{proposition}\label{bqjac}
For any measurable function $\phi$
  on $V_\R$, we have
\begin{equation}\label{Jac}
|\J|\cdot\int_{p_{I,J}\in R^{(i)}}
\int_{h\in G_\R}\phi(h\cdot p_{I,J}))\,\omega(h)\,dI dJ=\int_{G_\R\cdot R^{(i)}}\phi(v)dv=n_i\int_{\pnr}\phi(v)dv,
\end{equation}
where $\J$ is a nonzero constant in $\Q$ independent of $i$.
\end{proposition}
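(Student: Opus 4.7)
The plan is to carry out the Jacobian change-of-variables calculation parallel to that in \cite[Proposition~2.8]{BS}, adapted to the representation $V_\R=\R^{2}\otimes\Sym^{2}(\R^{4})$. The key map is
\[
\psi\colon G_\R\times R^{(i)}\longrightarrow V_\R^{(i)},\qquad (h,p_{I,J})\longmapsto h\cdot p_{I,J},
\]
which is smooth between manifolds of common dimension $18+2=20$. By Proposition~\ref{propgrvr}, $\psi$ is generically an $n_i$-to-one cover of its image, which accounts for the second equality in (\ref{Jac}). It therefore suffices to show that $\psi^{*}(dv)$ equals a nonzero rational constant times $\omega\wedge dI\wedge dJ$.

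Define $\J(h,I,J):=\psi^{*}(dv)/(\omega\wedge dI\wedge dJ)$. First, one shows $\J$ is independent of $h$: the form $\omega$ is left-invariant on $G_\R$ by definition, and $dv$ is $G_\R$-invariant because the action of $(g_2,g_4)\in G_\R$ on $V_\R$ has determinant $(\det g_2)^{10}(\det g_4)^{10}=(\det g_2\det g_4)^{10}=1$, while $\psi$ is left-$G_\R$-equivariant. Hence $\J$ descends to an algebraic function of $(I,J)$ alone.

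Second, one shows $\J$ is homogeneous of weight $0$. The scaling $v\mapsto\lambda v$ on $V_\R$ commutes with the $G_\R$-action, preserves each $R^{(i)}=\R_{>0}\cdot L^{(i)}$, and carries $p_{I,J}$ to $p_{\lambda^{8}I,\lambda^{12}J}$ since $I$ and $J$ have degrees $8$ and $12$ in the coordinates of $V$. Under this scaling, $dv$ and $dI\wedge dJ$ each scale by $\lambda^{20}$ while $\omega$ is unchanged, so $\J(\lambda^{8}I,\lambda^{12}J)=\J(I,J)$. Thus $\J$ is an algebraic function of the single weight-zero invariant $I^{3}/J^{2}$; and since the entire computation extends uniformly over $V_\C$ (whose $G_\C$-orbit space is a single two-dimensional irreducible affine variety common to all three real components), the resulting rational-valued function is one and the same for $i\in\{0\#,1,2\}$.

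The main obstacle is the explicit verification that $\J$ is a nonzero rational constant (rather than a nonconstant rational function of $I^{3}/J^{2}$). One picks a basis of the $18$-dimensional Lie algebra of $G$, evaluates the infinitesimal action at a convenient base point $p_{I_0,J_0}$ to produce $18$ tangent vectors in $V_\R$, adjoins $\partial p_{I,J}/\partial I$ and $\partial p_{I,J}/\partial J$ at $(I_0,J_0)$, and computes the resulting $20\times 20$ determinant in the standard $V_\Z$-basis. By the homogeneity above, this determinant must be a rational constant, whose nonvanishing is checked at the chosen $(I_0,J_0)$. The computation is routine but lengthy, and the exact value is irrelevant because of the cancellation in~(\ref{eqJcance}); the remainder of the argument is a direct transcription of \cite[Proposition~2.8]{BS}, with the Lie algebra of the $G$ of~(\ref{eqGR}) replacing the one used there.
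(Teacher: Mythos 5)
Your setup and most of the reductions are sound: the $G_\R$-invariance of $dv$ via the determinant calculation $(\det g_2)^{10}(\det g_4)^{10}=1$ is correct, the reduction to showing $\J$ depends only on $(I,J)$ via left-invariance of $\omega$ and $G_\R$-equivariance of $\psi$ is correct, the weight-$0$ homogeneity argument is correct, and the $n_i$-to-one covering argument for the second equality is exactly right. The paper itself simply cites \cite[Proposition~2.8]{BS} and gives no independent proof, so at that level you are taking the same route.

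However, there is a genuine gap at the step you yourself identify as "the main obstacle." You correctly observe that $G$-invariance plus weight-$0$ homogeneity only show that $\J$ is an algebraic function of the single weight-zero invariant $I^3/J^2$, not that it is constant. But then you assert that the $20\times 20$ determinant computed at a single base point $p_{I_0,J_0}$ "by the homogeneity above \ldots must be a rational constant." This is a non-sequitur: homogeneity of weight zero constrains $\J$ only along the one-parameter scaling orbit $\{(\lambda^8 I_0,\lambda^{12}J_0)\}$, which is precisely the level set $\{I^3/J^2 = I_0^3/J_0^2\}$. Evaluating at a single $(I_0,J_0)$ therefore determines $\J$ only on that curve, and tells you nothing about the value of $\J$ at other values of $I^3/J^2$. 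You have not eliminated the possibility that $\J$ is a nonconstant function of $I^3/J^2$, which is exactly what you said must be ruled out. An additional structural input is required. The cleanest one is to observe that $\iota_{X_1\wedge\cdots\wedge X_{18}}\,dv$ and $dI\wedge dJ$ are both degree-$18$ $G$-equivariant polynomial $2$-forms on $V$, and to show (by a representation-theoretic plethysm computation, as works for the binary quartic case in \cite{BS}) that the space of such forms is one-dimensional, forcing the two to be proportional by a scalar; alternatively one may argue via a polynomial section over $\C$ and degree counting. Either way, something beyond weight-zero homogeneity is needed, and that something is missing from your write-up. Since the constancy of $\J$ (not merely its numerical value) is what permits pulling $|\J|$ out of the integral in~(\ref{Jac}) and enables the cancellation in~(\ref{eqJcance}), this gap cannot be waved away as "irrelevant."
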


Using Proposition \ref{bqjac}, it is easy to compute the volume of the
multiset $\FF\cdot R^{(i)}(X)$:

\begin{equation}
  \int_{\FF\cdot R^{(i)}(X)}\!\!\!\!\!dv=
  |\J|\cdot\int_{p_{I,J}\in R^{(i)}(X)}\int_{\FF}\omega(h)\,dI\,dJ=|\J|\cdot\Vol(\FF)\int_{R^{(i)}(X)}dI\,dJ.
\end{equation}
Up to an error of $O(X^{1/2})$, the quantity $\int_{R^{(i)}(X)}dI\,dJ$ is
equal to $N^{+}(X)$ when $i=0\#$ or $2$, and $N^{-}(X)$
when $i=1$ (see the proof of \cite[Proposition 2.10]{BS} for details).

We conclude that
\begin{equation}
\begin{array}{ccc}
N(V_{\Z}^{(1)};X)&=&\displaystyle\frac14|\J|\cdot\Vol(G_\Z\backslash G_\R)N^-(X)+o(X^{5/6}),\\[0.1in]
N(V_{\Z}^{(i)};X)&=&\displaystyle\frac1{8}|\J|\cdot\Vol(G_\Z\backslash G_\R)N^+(X)+o(X^{5/6}),
\end{array}
\end{equation}
for $i=0\#$ and $2$.
We thus obtain Theorem \ref{thsec2main}.

\subsection{Congruence conditions}\label{s34}
In this subsection, we present a version of Theorem \ref{thsec2main} in which we
count pairs of integral quaternary quadratic forms satisfying any finite set of
congruence conditions.

For any set $S$ in $V_\Z$ that is
definable by congruence conditions, let us denote by $\mu_p(S)$
the $p$-adic density of the $p$-adic closure of $S$ in $V_{\Z_p}$,
where we normalize the additive measure $\mu_p$ on $V_{\Z_p}$ so that
$\mu_p(V_{\Z_p})=1$.
We then have the following theorem whose proof is identical to that of \cite[Theorem 2.11]{BS}.
\begin{theorem}\label{cong2}
Suppose $S$ is a subset of $\pnv$ defined by congruence conditions modulo finitely many prime powers. Then we have 
\begin{equation}\label{ramanujan}
N(S\cap\pnv;X)
  = N(\pnv;X)
  \prod_{p} \mu_p(S)+o(X^{5/6}),
\end{equation}
where $\mu_p(S)$ denotes the $p$-adic density of $S$ in $V_\Z$, and
where the implied constant in $o(X^{5/6})$ depends only on $S$.
\end{theorem}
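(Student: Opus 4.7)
The plan is to follow the template of \cite[Theorem~2.11]{BS}: reduce to counting on a single translated sublattice, rerun the argument of Sections~\ref{s31}--\ref{s33} on that sublattice, and recombine the resulting local densities via the Chinese Remainder Theorem. I would first choose a modulus $m$ such that $S$ is a union of residue classes mod $m$, and write $S = \bigsqcup_j (v_j + m V_\Z)$, where the $v_j$ range over the residues in $S$. Because $N(\cdot\,;X)$ is defined (via the averaging integral~(\ref{eqbeforeavg})) on arbitrary subsets of $V_\Z$, not necessarily $G_\Z$-invariant, it is manifestly additive in $S$; hence it suffices to prove that for each single translated sublattice $L = v_0 + m V_\Z \subset V_\Z$,
$$N(L \cap V_\Z^{(i)}; X) = \frac{1}{m^{20}}\, N(V_\Z^{(i)}; X) + o(X^{5/6}),$$
where $\dim V = 20$.

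Next, I would rerun the arguments of Sections~\ref{s32}--\ref{s33} verbatim with $V_\Z$ replaced throughout by $L$. The cuspidal estimate of Lemma~\ref{lemtable} continues to hold because its proof only invokes Davenport's proposition on projections of $B(n,a;X)$ onto coordinate subspaces; any such projection intersected with $L$ is a translated sublattice of index dividing $m^{20}$, which changes the integer-point count by at most an $m$-dependent constant. Lemma~\ref{lemfewred} is unaffected, as it bounds reducible elements without reference to the ambient lattice. In the main-body step (Proposition~\ref{propequalsvol}), Davenport's lemma applied to $B(n,a;X) \cap L$ yields $\Vol(B(n,a;X))/m^{20}$ as the main term, with an error of the same order, since $L$ has covolume $m^{20}$; the volume computation of Section~\ref{s33} then propagates verbatim.

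Finally, I would sum over cosets to obtain $N(S \cap V_\Z^{(i)}; X) = (\#\{v_j\}/m^{20})\, N(V_\Z^{(i)}; X) + o(X^{5/6})$, and identify $\#\{v_j\}/m^{20}$ with $\prod_p \mu_p(S)$ via the Chinese Remainder Theorem: indeed $\mu_p(S) = 1$ for every $p \nmid m$, while for the finitely many $p$ with $p^{e_p} \| m$ one has $\mu_p(S) = \#\{v_j \bmod p^{e_p}\}/p^{20 e_p}$. The main obstacle is purely bookkeeping: verifying that the constants implicit in the $O$-notation throughout Section~\ref{s32} remain uniform when $V_\Z$ is replaced by the sublattice $L$. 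Since $S$, and therefore $m$, is fixed in the statement of the theorem, these constants acquire only a harmless dependence on $m$, matching the dependence on $S$ that is already permitted; no genuinely new geometric or analytic ingredient beyond those used in Theorem~\ref{thsec2main} is required.
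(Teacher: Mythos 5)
Your proposal is correct and is essentially the same argument as the cited reference \cite[Theorem~2.11]{BS}, whose proof the paper invokes verbatim: decompose $S$ into cosets of a sublattice $mV_\Z$, use the fact that $N(\cdot\,;X)$ as defined by \eqref{eqbeforeavg} is additive over disjoint subsets, rerun the Davenport-type lattice-point count on each translated sublattice (where cusp bounds are automatically inherited since a sublattice has no more integer points in the cusp than $V_\Z$ does, and the main-body volume picks up the factor $m^{-20}$), and reassemble the coset count into $\prod_p\mu_p(S)$ via CRT. You correctly identify $\dim V = 20$ and correctly note that all implied constants may depend on $m$, hence on $S$, which is exactly the dependence permitted by the statement.
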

We furthermore have the following weighted version of Theorem \ref{cong2} whose proof is identical to that of \cite[Theorem 2.12]{BS}.

\begin{theorem}\label{cong3}
  Let $p_1\ldots,p_k$ be distinct prime numbers. For $j=1,\ldots,k$,
  let $\phi_{p_j}:V_\Z\to\R$ be bounded $G_\Z$-invariant functions on
  $V_\Z$ such that $\phi_{p_j}(v)$ depends only on the congruence
  class of $v$ modulo some power $p_j^{a_j}$ of $p_j$.  Let
  $N_\phi(V_\Z^{(i)};X)$ denote the number of strongly irreducible
  $G_\Z$-orbits in $V_\Z^{(i)}$ having height bounded by $X$,
  where each orbit $G_\Z\cdot v$ is counted with weight
  $\phi(v):=\prod_{j=1}^k\phi_{p_j}(v)$. Then we have
\begin{equation}
N_\phi(V_\Z^{(i)};X)
  = N(V_\Z^{(i)};X)
  \prod_{j=1}^k \int_{v\in V_{\Z_{p_j}}}\tilde{\phi}_{p_j}(v)\,dx+o(X^{5/6}),
\end{equation}
where $\tilde{\phi}_{p_j}$ is the natural extension of ${\phi}_{p_j}$
to $V_{\Z_{p_j}}$ by continuity, $dv$ denotes the additive
measure on $V_{\Z_{p_j}}$ normalized so that $\int_{v\in
  V_{\Z_{p_j}}}dv=1$, and where the implied constant in the error term
depends only on the local weight functions ${\phi}_{p_j}$.
\end{theorem}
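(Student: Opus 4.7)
The plan is to reduce Theorem \ref{cong3} to Theorem \ref{cong2} by writing each weight function as an explicit finite linear combination of characteristic functions of congruence classes, and then extracting the product structure of the $p$-adic densities.

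First, since $\phi_{p_j}$ is bounded and depends only on the class of $v$ modulo $p_j^{a_j}$, it takes only finitely many values on $V_\Z$. I would write
\[
\phi_{p_j} = \sum_{\alpha} c_{j,\alpha}\,\mathbb{1}_{S_{j,\alpha}},
\]
where $\alpha$ ranges over the residue classes of $V_\Z$ modulo $p_j^{a_j}$, $c_{j,\alpha}\in\R$ is the common value of $\phi_{p_j}$ on $S_{j,\alpha}$, and $S_{j,\alpha}\subset V_\Z$ is the corresponding coset. Expanding the product $\phi(v)=\prod_j\phi_{p_j}(v)$ gives
\[
\phi = \sum_{(\alpha_1,\dots,\alpha_k)} \Bigl(\prod_{j=1}^k c_{j,\alpha_j}\Bigr)\,\mathbb{1}_{S_{\alpha_1,\dots,\alpha_k}},
\qquad S_{\alpha_1,\dots,\alpha_k}:=\bigcap_{j=1}^k S_{j,\alpha_j},
\]
a \emph{finite} linear combination whose number of summands is bounded independently of $X$. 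By the Chinese Remainder Theorem, each $S_{\alpha_1,\dots,\alpha_k}$ is a single residue class of $V_\Z$ modulo $\prod_j p_j^{a_j}$, hence defined by finitely many congruence conditions.

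Next, I would apply Theorem \ref{cong2} to each set $S_{\alpha_1,\dots,\alpha_k}$, obtaining
\[
N(S_{\alpha_1,\dots,\alpha_k}\cap V_\Z^{(i)};X)= N(V_\Z^{(i)};X)\prod_{p}\mu_p(S_{\alpha_1,\dots,\alpha_k}) + o(X^{5/6}),
\]
with error term depending only on the set. Since the primes $p_1,\dots,p_k$ are distinct and the conditions are supported at these primes, the density factors as
\[
\prod_p \mu_p(S_{\alpha_1,\dots,\alpha_k}) = \prod_{j=1}^k \mu_{p_j}(S_{j,\alpha_j}),
\]
and $\mu_q(S_{\alpha_1,\dots,\alpha_k})=1$ for $q\notin\{p_1,\dots,p_k\}$. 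Multiplying by $\prod_j c_{j,\alpha_j}$ and summing over all tuples $(\alpha_1,\dots,\alpha_k)$, the main term becomes $N(V_\Z^{(i)};X)\prod_{j=1}^k\bigl(\sum_{\alpha_j}c_{j,\alpha_j}\mu_{p_j}(S_{j,\alpha_j})\bigr)$, while the error is a finite sum of $o(X^{5/6})$ terms and is therefore itself $o(X^{5/6})$ with implied constant depending only on the $\phi_{p_j}$. Finally, since $\tilde\phi_{p_j}$ is the continuous extension of $\phi_{p_j}$ to $V_{\Z_{p_j}}$ and is constant on each $p_j$-adic closure of $S_{j,\alpha_j}$ with value $c_{j,\alpha_j}$, one has
\[
\int_{v\in V_{\Z_{p_j}}}\tilde\phi_{p_j}(v)\,dv=\sum_{\alpha_j}c_{j,\alpha_j}\,\mu_{p_j}(S_{j,\alpha_j}),
\]
which matches the $j$-th factor in the claimed product formula and completes the argument.

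There is essentially no serious obstacle: the proof is purely formal once Theorem \ref{cong2} is in hand. The only point requiring care is that the decomposition of $\phi$ into characteristic functions has only finitely many terms (bounded in terms of the $\phi_{p_j}$ but independent of $X$), so that the finitely many $o(X^{5/6})$ errors can be safely combined into a single $o(X^{5/6})$ whose implied constant depends only on the local weights.
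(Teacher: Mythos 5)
Your proof is correct and is essentially the standard reduction: decompose each $\phi_{p_j}$ into a finite linear combination of indicator functions of residue classes, expand the product via the Chinese Remainder Theorem, apply Theorem~\ref{cong2} (whose definition of $N(S;X)$ via the averaged integral \eqref{eqbeforeavg} is, as the paper notes, valid even for non-$G_\Z$-invariant $S$), and resum using the multiplicativity of the local densities. The paper itself gives no proof but cites \cite[Theorem~2.12]{BS}, and the argument there is this same linearity argument, so your approach matches the paper's.
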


\subsection[The number of reducible points in the main bodies of the
fundamental domains is negligible]{The number of reducible points and points with large stabilizers in
  the main bodies of the fundamental domains is negligible}\label{s35} 

In this section we prove Lemma \ref{lemfewred}, which states that the number of integral elements that are not strongly irreducible in the main body of the fundamental domain is negligible. We then prove that the number of strongly irreducible $G_\Z$-orbits on elements with a nontrivial stabilizer in $G_\Q$ having bounded height is negligible.

\vspace{0.1in}
\noindent{\bf Proof of Lemma \ref{lemfewred}:}
An element $(A,B)\in V_\Z$ with $\Delta(A,B)\neq 0$ fails to be strongly irreducible if and only if the binary quartic resolvent form $f_{A,B}(x,y)=16\det(Ax+By)$ has a root in $\P^1_{\Q}$. Let $p>3$ be prime. If $f(x,y)$ has a root in
$\P^1_{\Q}$, then the reduction of $f(x,y)$ modulo
$p$ has a root in $\P^1_{\F_p}$. We construct elements $(A,B)\in
V_{\F_p}$, for a positive density family of primes $p$, such that
$f_{A,B}(x,y)$ has no root in $\P^1(\F_p)$.

Let $p$ be a prime congruent to $3$ modulo $4$ such that there exists an element $s\in\F_p$ satisfying $s^2=-2$.
Consider the pair
\begin{equation}\label{eqAB2}
(A,B)=\left( \left[ \begin{array}{cccc} 1 &  &  & \\  & 1 &  & \\  &  & 1 & \\  &  &  &1 \end{array} \right],
  \left[ \begin{array}{cccc}  & 1 &  & \\ 1 &  & s & \\  & s &  & 1\\  &  & 1 & \end{array} \right]\right).
\end{equation}
We have $\det(Ax+By)=x^4+y^4$, implying that $f_{A,B}(x,y)$ has no
root defined over $\F_p$. 
Therefore, if the reduction modulo $p$ of $(A,B)\in V_\Z$ is
$G_{\F_p}$-equivalent to any $\F_p^\times$-multiple of the right hand
side of (\ref{eqAB2}), then $(A,B)$ is not strongly
irreducible. 
Since $\#\{g\cdot\lambda\cdot (A,B):g\in
G_{\F_p},\;\lambda\in\F_p^\times\}\gg \#V_{\F_p}/p$, we obtain
\begin{equation*}
  \int_{g\in N'(a)A'}\#\{(A,B)\in V_\Z^\red\cap B(n,a;X):a_{11}\neq 0\}dg=O\Bigl(X^{5/6}\prod_{\substack{p\equiv 3\!\!\!\!\pmod{4}\\p<Y}}(1-p^{-1})\Bigr)
\end{equation*}
for any $Y>0$. Letting $Y\to\infty$ yields
Lemma \ref{lemfewred}. $\Box$

\begin{proposition}\label{propstrongnottot}
  The number of $G_\Z$-orbits on elements on $V_\Z$ that are strongly
  irreducible, have height bounded by $X$, and have a nontrivial
  stabilizer in $G_\Q$ is $o(X^{5/6})$.
\end{proposition}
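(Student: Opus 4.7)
The plan is to use Proposition \ref{propparamfield} to translate ``nontrivial $G_\Q$-stabilizer'' into a restrictive condition on the invariants $(I,J)$, and then to bound the resulting integer count via a Nagell--Lutz parametrization combined with a uniform per-fiber orbit bound.

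By Proposition \ref{propparamfield}, an element of $V_\Z$ with nonzero discriminant and invariants $(I,J)$ has nontrivial $G_\Q$-stabilizer if and only if $E_{I,J}(\Q)[4]$ is nontrivial. Any nontrivial rational $4$-torsion point doubles to a nontrivial rational $2$-torsion point, so this condition forces $E_{I,J}(\Q)[2]\neq 0$. Writing $I=-3A_0$ and $J=-27B_0$ so that $E_{I,J}$ becomes $y^2=x^3+A_0x+B_0$, the existence of rational $2$-torsion means the cubic $x^3+A_0x+B_0$ has a rational root; by the Nagell--Lutz theorem this root is an integer $r$, and $B_0=-r^3-A_0r$.

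First, I would bound the number of pairs $(A_0,B_0)\in\Z^2$ with $\max\{4|A_0|^3,27B_0^2\}<X$ that satisfy $B_0=-r^3-A_0r$ for some $r\in\Z$. For fixed $r\neq 0$ the conditions $|A_0|\ll X^{1/3}$ and $|r|\cdot|r^2+A_0|\ll X^{1/2}$ confine $A_0$ to an interval of length $O(\min(X^{1/3},X^{1/2}/|r|))$, and the case $r=0$ contributes only $O(X^{1/3})$ pairs with $B_0=0$; summing over $|r|\ll X^{1/6}$ yields $O(X^{1/2}\log X)$ pairs. Hence the number of invariant pairs $(I,J)$ with $H(I,J)<X$ and $E_{I,J}$ having nontrivial rational $4$-torsion is $O(X^{1/2+\epsilon})$.

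Next, for each such pair $(I,J)$, I would bound the number of strongly irreducible $G_\Z$-equivalence classes on the fiber $\{v\in V_\Z : I(v)=I,\ J(v)=J\}$ by $O(|\Delta(I,J)|^\epsilon)=O(X^\epsilon)$. Finiteness for each fixed fiber follows from Borel--Harish-Chandra, and the polynomial-in-log bound follows by viewing the fiber modulo $G_\Z$ as a subset of a cohomology set controlled by class and unit groups of the \'etale algebras attached to $E_{I,J}[4]$ (or, alternatively, by adapting the explicit parametrization used in the $n=2$ analogue of \cite{BS}). Multiplying the two bounds gives $O(X^{1/2+\epsilon})\cdot O(X^\epsilon)=o(X^{5/6})$. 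The main obstacle is this uniform per-fiber orbit bound, since $V_\Z$ is a $20$-dimensional $G_\Z$-representation and the naive class-number input is not entirely straightforward; a cleaner way to sidestep it is to use the rational $2$-torsion point of $E_{I,J}$ to split $(A,B)$ via a $G_\Z$-equivariant change of basis into a lower-dimensional pencil of quadrics where a direct geometry-of-numbers count on the resulting prehomogeneous slice gives the required $o(X^{5/6})$ bound with no cohomological input.
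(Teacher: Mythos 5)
The key translation you make—via Proposition~\ref{propparamfield}, that a nontrivial $G_\Q$-stabilizer is equivalent to $E^{I,J}$ having a rational $4$-torsion point, hence a rational $2$-torsion point, hence a rational root of the associated cubic—is exactly the first step of the paper's proof, and your count $O(X^{1/2+\epsilon})$ of invariant pairs $(I,J)$ with a rational $2$-torsion point is correct. However, the remainder of your argument has a genuine gap. You need a bound of $O(X^{\epsilon})$ (or at least $o(X^{1/3})$) on the number of strongly irreducible $G_\Z$-orbits in the fiber over a fixed $(I,J)$, and neither of your two suggested routes to this bound is established. The cohomological route would bound $G_\Q$-orbits via $H^1(\Q,E[4])$ and then $G_\Z$-orbits inside a $G_\Q$-orbit via a class-number-type quantity; both factors can a priori grow polynomially in the discriminant, and $X^{\epsilon}$ control would require inputs (e.g.\ bounds on $4$-torsion in \SH, or on class numbers of the relevant quartic algebras) far beyond what the paper uses. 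The alternative ``prehomogeneous slice'' route is stated only as a possibility and is not developed; setting up a geometry-of-numbers count in a $G_\Z$-equivariant slice with its own cusps is itself a substantial project, not a lemma one can invoke.

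The paper avoids any per-fiber orbit count entirely and instead works within the averaging framework of Section~\ref{s32}, mirroring the proof of Lemma~\ref{lemfewred}. For a positive-density set of primes ($p\equiv 1\pmod 3$, with a noncube $t\in\F_p$), it exhibits an explicit $(A,B)$ mod $p$ whose resolvent gives a cubic $x^3-ty^3$ irreducible over $\F_p$; any integral $(A,B)$ whose reduction mod $p$ lands in the corresponding $G_{\F_p}\times\F_p^\times$-orbit therefore has trivial $G_\Q$-stabilizer. Since that orbit has density $\gg 1/p$ in $V_{\F_p}$, elements with nontrivial stabilizer must avoid a mod-$p$ condition of density $\gg 1/p$ for every such $p$; multiplying the complementary densities $\prod_{p}(1-c/p)$ over the infinite set of such primes drives the count in the main body of the fundamental domain to $o(X^{5/6})$, while Lemma~\ref{lemtable} handles the cuspidal region. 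This sieve-style argument is what lets the paper bypass precisely the per-fiber estimate that your proposal leaves unproved; as written, your proof is incomplete at that step.
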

\begin{proof}
  Proposition \ref{propparamfield} implies that an element $(A,B)\in
  V_\Z$ having invariants $I$ and $J$ has a nontrivial stabilizer in
  $G_\Q$ if and only if the elliptic curve
  $E:y^2=g_{A,B}(x)=x^3-\frac{I}{3}-\frac{J}{27}$ contains a nontrivial
  $4$-torsion point over $\Q$, which happens exactly when $g(x)$ has a
  rational root.

Let $p$ be a prime congruent to $1$ modulo $3$. Let $t\in\F_p$ be an
element having no solution $a^3=t$ for $a\in\F_p$.  Consider the pair $(A,B)$ given by
\begin{equation}\label{eqAB1}
2(A,B)=\left( \left[ \begin{array}{cccc} 0 &  &  & \\  &  &  & 1\\  &  & 1 & \\  & 1 &  & \end{array} \right],
\left[ \begin{array}{cccc} -1 &  &  & \\  &  & 1 & \\  & 1 &  & \\  &  &  & -t\end{array} \right]\right).
\end{equation}
We have $16\det(Ax+By)=x^3y-ty^4$, implying that $g_{A,B}(x,y)=x^3-ty^3$ is
irreducible over $\F_p$.

Therefore, if the reduction modulo $p$ of $(A,B)\in V_\Z$ is
$G_{\F_p}$-equivalent to the right hand side of (\ref{eqAB1}) for any
prime $p$, then $(A,B)$ has a trivial stabilizer in $G_\Q$.
Proposition \ref{propstrongnottot} now follows from Lemma
\ref{lemtable} and an argument identical to the proof of Lemma
\ref{lemfewred}.
\end{proof}

\subsection{Tail estimates and a squarefree sieve}\label{s36}

In order to prove Theorems \ref{mainellip} and \ref{ellipcong}, we
require a stronger version of Theorem \ref{cong3}; one which counts
weighted $G_\Z$-orbits where the weights are defined by congruence
conditions modulo infinitely many prime powers. In this subsection, we
use the methods and results of \cite{geosieve} to prove the necessary result. 

We start with the following two definitions. 
A function $\phi:V_\Z\to[0,1]\subset\R$ is said to be {\it defined by congruence
  conditions} if, for all primes $p$, there exist functions
$\phi_p:V_{\Z_p}\to[0,1]$ satisfying the following conditions:
\begin{itemize}
\item[(1)] For all $(A,B)\in V_\Z$, the product $\prod_p\phi_p(A,B)$ converges to $\phi(A,B)$.
\item[(2)] For each prime $p$, the function $\phi_p$ is
locally constant outside some closed set $S_p \subset V_{\Z_p}$ of measure zero.
\end{itemize}
Such a function $\phi$ is called {\it acceptable} if, for sufficiently
large primes $p$, we have $\phi_{p}(A,B)=1$ whenever $p^2\nmid\Delta(A,B)$.

Then we will prove the following theorem:
\begin{theorem}\label{thsqsieve}
    Let $\phi:V_\Z\to[0,1]$ be an acceptable function that is defined by
  congruence conditions via the local functions $\phi_{p}:V_{\Z_p}\to[0,1]$. Then, with
  notation as in Theorem~$\ref{cong3}$, we have:
\begin{equation}
N_\phi(V_\Z^{(i)};X)
  = N(V_\Z^{(i)};X)
  \prod_{p} \int_{v\in V_{\Z_{p}}}\phi_{p}(v)\,dv+o(X^{5/6}).
\end{equation}
\end{theorem}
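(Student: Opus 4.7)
The plan is to approximate $\phi$ by a sequence of truncated products keeping only primes $p\leq M$, apply Theorem~\ref{cong3} to each truncation, and then show that the error introduced by discarding primes $p>M$ is $o(X^{5/6})$ uniformly in $X$ once $M$ is taken large enough. This reduces the theorem to a uniform ``squarefree tail'' estimate for $G_\Z$-orbits whose discriminant is divisible by the square of a large prime.

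To set this up, for a parameter $M$, set $\phi^{\leq M}(v):=\prod_{p\leq M}\phi_p(v)$. Since each $\phi_p$ takes values in $[0,1]$, the function $\phi^{\leq M}$ dominates $\phi$ pointwise. By the acceptability hypothesis, there exists $p_0$ such that for every prime $p\geq p_0$ and every $v\in V_\Z$ with $p^2\nmid \Delta(v)$ one has $\phi_p(v)=1$. Consequently the difference $\phi^{\leq M}-\phi$ is supported on
$$\mathcal{W}_M:=\{v\in V_\Z:\exists\,p>\max(M,p_0)\text{ with }p^2\mid \Delta(v)\}.$$
Theorem~\ref{cong3} applied to $\phi^{\leq M}$ gives, for each fixed $M$,
$$N_{\phi^{\leq M}}(V_\Z^{(i)};X)=N(V_\Z^{(i)};X)\prod_{p\leq M}\int_{V_{\Z_p}}\phi_p(v)\,dv+o_M(X^{5/6}),$$
and $\prod_{p\leq M}\int\phi_p\to\prod_p\int\phi_p$ as $M\to\infty$ (the infinite product converging by acceptability). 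Theorem~\ref{thsqsieve} therefore follows once we show that $N(\mathcal{W}_M\cap V_\Z^{(i)};X)/X^{5/6}\to 0$ as $M\to\infty$, uniformly in $X$.

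To prove this tail bound, I would split the offending prime $p$ into a \emph{small} range $\max(M,p_0)<p\leq X^{1/6}$ and a \emph{large} range $p>X^{1/6}$. For the small range, the singular locus of the discriminant hypersurface $\{\Delta=0\}\subset V$ has codimension at least two (this is the key geometric input: at a smooth point of $\{\Delta=0\}$, the $p$-adic density of $\{p^2\mid \Delta\}$ is $O(p^{-2})$ by a Hensel lifting argument, while the singular locus itself reduces mod $p$ to a set of density $O(p^{-2})$ in $V_{\F_p}$). Hence $\mu_p(\{v:p^2\mid \Delta(v)\})=O(p^{-2})$ uniformly in $p$, and Theorem~\ref{cong2} applied prime-by-prime yields
$$\sum_{M<p\leq X^{1/6}}N(\{p^2\mid \Delta\}\cap V_\Z^{(i)};X)=O\Bigl(X^{5/6}\sum_{p>M}p^{-2}\Bigr)=O(X^{5/6}/M),$$
which is acceptable after taking $M$ large.

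The large-prime range is the main obstacle: here Theorem~\ref{cong2} is no longer directly applicable because the error terms accumulate unfavorably once $p$ exceeds a small power of $X$. This is exactly the regime handled by the geometric (Ekedahl) sieve developed in~\cite{geosieve}. The input is again that $p^2\mid \Delta(v)$ for $p>X^{1/6}$ forces the reduction $v\bmod p$ to lie on the codimension $\geq 2$ singular locus of $\{\Delta=0\}\bmod p$; the fibering and counting arguments of~\cite{geosieve}, carried out over the unbounded fundamental region $\FF g\cdot R^{(i)}(X)$, then give the uniform bound
$$\#\{v\in V_\Z\cap\FF g\cdot R^{(i)}(X):p^2\mid \Delta(v)\text{ for some }p>X^{1/6}\}=o(X^{5/6}).$$
Averaging this bound over $g\in G_0$ as in Section~\ref{s32} and combining with the small-prime estimate above closes the tail bound and establishes Theorem~\ref{thsqsieve}.
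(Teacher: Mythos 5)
Your overall strategy — truncate $\phi$ at primes $\leq M$, apply Theorem~\ref{cong3} to the truncation, and reduce the theorem to a uniform tail estimate on $\bigcup_{p>M}\W_p$, where $\W_p$ denotes the set of $v\in V_\Z$ with $p^2\mid\Delta(v)$ — is exactly the one the paper follows (the paper states its Theorem~\ref{thtailest} as precisely this tail estimate, and then notes that Theorem~\ref{thsqsieve} follows formally as in \cite{BS}). Your small-prime range is also handled essentially as in the paper. However, the large-prime range contains a genuine gap.

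You assert that for $p>X^{1/6}$, the condition $p^2\mid\Delta(v)$ \emph{forces} the reduction $v\bmod p$ to lie on the codimension $\geq 2$ singular locus of $\{\Delta=0\}\bmod p$, and on that basis you apply the geometric sieve of \cite{geosieve} directly. This is false. It is entirely possible that $\Delta(v)\equiv 0\pmod{p^2}$ while $v\bmod p$ is a \emph{smooth} point of the discriminant hypersurface mod $p$; in the paper's notation this is the set $\W_p^{(2)}$, defined as those $v$ for which $p^2\mid\Delta(v)$ but $p^2\nmid\Delta(v+pv')$ for some $v'$ (the ``mod $p^2$ reasons'' case). The cited result \cite[Theorem~3.3]{geosieve} applies to the set $\W_p^{(1)}$ of points where divisibility of $\Delta$ by $p^2$ is detected mod $p$ — i.e., where the reduction lies on a codimension $\geq 2$ subvariety — but says nothing directly about $\W_p^{(2)}$, and no amount of invoking the geometric sieve alone will close this. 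The paper handles $\W_p^{(2)}$ with a separate, representation-specific argument: after a $G_\Z$-translation one may assume $a_{12}\equiv a_{13}\equiv a_{14}\equiv b_{11}\equiv 0\pmod p$ and $a_{11}\equiv 0\pmod{p^2}$, then acts by the explicit rational element $\gamma_p=(\mathrm{diag}(1,p),\,\mathrm{diag}(p^{-1},1,1,1))$ to produce a discriminant-preserving map from $G_\Z$-orbits on $\W_p^{(2)}$ to $G_\Z$-orbits on $\W_p^{(1)}$, and proves via a dedicated lemma that this map is at most $2$-to-$1$. This reduction of $\W_p^{(2)}$ to $\W_p^{(1)}$ is the technical heart of the tail estimate and is absent from your proposal; without it, or some substitute handling the ``mod $p^2$'' contribution, the argument does not close.
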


For a prime $p$, let $\W_p$ denote the set of elements in $V_\Z$ whose
discriminants are divisible by~$p^2$. The key ingredient needed to
prove Theorem \ref{thsqsieve} is the following tail estimate:
\begin{theorem}\label{thtailest}
  Let $\epsilon>0$ be fixed. Then for any $i\in\{0\#,1,2\}$ we have:
  \begin{equation}\label{equnif}
    N\bigl(\displaystyle\cup_{p>M}\W_p,X\bigr)=O_\epsilon(X^{5/6}/(M\log M)+X^{19/24})+O(\epsilon X^{5/6}).
  \end{equation}
\end{theorem}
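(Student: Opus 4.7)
The plan is to stratify $\W_p$ according to whether $p^2\mid\Delta(A,B)$ holds for ``generic'' or for ``exceptional'' reasons, and then to apply the quantitative geometric sieve of \cite{geosieve}. For each prime $p$, let $Y_p\subset V_{\F_p}$ denote the singular locus of the discriminant variety $\{\Delta\equiv 0\pmod p\}$, and split $\W_p=\W_p^{(1)}\sqcup\W_p^{(2)}$, where $\W_p^{(1)}$ consists of those $(A,B)\in\W_p$ with $(A,B)\bmod p\in Y_p$, while $\W_p^{(2)}$ consists of those $(A,B)\in\W_p$ whose mod-$p$ reduction is a smooth point of $\{\Delta\equiv 0\pmod p\}$. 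The essential density inputs are: (i) $Y_p$ has codimension $2$ in $V_{\F_p}$, so $\mu_p(\W_p^{(1)})=O(1/p^2)$ uniformly in $p$; and (ii) at a smooth point of the discriminant hypersurface, Hensel's lemma forces any lift with $p^2\mid\Delta$ to satisfy an additional mod-$p^2$ relation, giving $\mu_{p^2}(\W_p^{(2)})=O(1/p^2)$ as well.

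To bound $N(\bigcup_{p>M}\W_p^{(1)};X)$, I apply the averaged geometry-of-numbers argument of Section \ref{s32}, now with the additional congruence condition $(A,B)\bmod p\in Y_p$. By Proposition \ref{propdavenport}, the main-body contribution for a single prime $p$ is $O(X^{5/6}\cdot\mu_p(\W_p^{(1)}))=O(X^{5/6}/p^2)$, uniformly in $p$. Summing over $p>M$ and using $\sum_{p>M}p^{-2}\ll 1/(M\log M)$ (by partial summation applied to $\pi(x)\sim x/\log x$) yields the $X^{5/6}/(M\log M)$ term. The cuspidal contribution, where some of the $a_{ij}$, $b_{ij}$ vanish, is handled exactly as in Lemma \ref{lemtable}, whose estimates are unchanged by imposition of a mod-$p$ congruence; this gives the $X^{19/24}$ term.

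To bound $N(\bigcup_{p>M}\W_p^{(2)};X)$, I follow the quantitative sieve of \cite{geosieve}. The mod-$p$ condition ``$\Delta\equiv 0$'' alone cuts out a codimension-$1$ locus of density $1/p$, and naively summing $X^{5/6}/p$ over $p>M$ diverges; the savings come from the Hensel refinement modulo $p^2$. For $M<p\leq X^{1/24}$, an adaptation of the main theorem of \cite{geosieve} to the present action of $G_\Z$ on $V_\Z$ yields $\sum_{M<p\leq X^{1/24}}N(\W_p^{(2)};X)=O(\epsilon X^{5/6})$, provided $M$ is taken large enough in terms of $\epsilon$. For $p>X^{1/24}$, the bound $|\Delta(A,B)|=O(X)$ on $B(n,a;X)$ together with $p^2\mid\Delta(A,B)$ force each $(A,B)$ to contribute to only boundedly many primes $p$, and a direct count using Proposition \ref{propdavenport} together with the volume estimate of Section \ref{s33} handles this range.

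The main obstacle will be handling the codimension-$1$ case $\W_p^{(2)}$: adapting the geometric sieve of \cite{geosieve} to the present setting requires a uniform-in-$p$ version of Proposition \ref{propdavenport} over the cuspidal regions, and also requires ensuring that the Hensel refinement at smooth points of $\{\Delta=0\}$ is compatible with the averaging procedure over $g\in G_0$ of Section \ref{s32}. The situation parallels the $3$-Selmer case treated in \cite{TC}, but the higher-dimensional space $V_\Z$ and the more intricate cusp geometry recorded in Table \ref{table1} force more careful bookkeeping at every step.
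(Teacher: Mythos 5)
Your decomposition $\W_p=\W_p^{(1)}\sqcup\W_p^{(2)}$ according to whether the mod-$p$ reduction lands in the singular locus of $\{\Delta\equiv 0\}$ or a smooth point is exactly the paper's (the paper phrases $\W_p^{(1)}$ as ``$p^2\mid\Delta$ for mod-$p$ reasons,'' which amounts to the same thing), and your treatment of $\W_p^{(1)}$ via the Ekedahl-type estimate is correct: this is precisely what \cite[Theorem 3.3]{geosieve} gives, applied to the bounded domain $\FF^{(\epsilon)}\cdot R^{(i)}(X)$, with the cuspidal cut-off contributing the $O(\epsilon X^{5/6})$ term rather than needing Table~\ref{table1} again.

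The genuine gap is in your handling of $\W_p^{(2)}$. You propose to run a direct lattice-point count subject to the mod-$p^2$ Hensel refinement, but this does not give the error-term savings you need: if one imposes a density-$1/p^2$ congruence condition \emph{modulo $p^2$} and applies Proposition~\ref{propdavenport} class by class, the boundary error for each of the $\sim p^{2(\dim V-1)}$ admissible residue classes is $O\bigl((X^{1/24}/p^2)^{\dim V-1}\bigr)$, and summing recovers an error of size $O\bigl((X^{1/24})^{\dim V-1}\bigr)$ \emph{independent of $p$}; summing that over $p\in(M,X^{1/24}]$ destroys the main term. In other words, the Ekedahl/geometric-sieve mechanism cited in \cite{geosieve} does not by itself cover the codimension-one stratum, and ``adaptation of the main theorem of \cite{geosieve}'' is not available here without a new idea. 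You correctly flag this as ``the main obstacle,'' but the obstacle is not bookkeeping: it is structural.

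The paper sidesteps this entirely by an embedding trick rather than a sieve. For $(A,B)\in\W_p^{(2)}$ one has $v_p(\Delta(A,B))=v_p(\Delta(f_{A,B}))$, and since $(A,B)\notin\W_p^{(1)}$ the reduction of $f_{A,B}$ mod $p$ has a repeated linear factor; after a $G_\Z$-translation one may assume $p^2\mid a_{11}$ and $p\mid a_{12},a_{13},a_{14},b_{11}$. Acting by an explicit $\gamma_p\in G_\Q$ (diagonal with entries $1,p$ in the $\GL_2$-factor and $p^{-1},1,1,1$ in the $\GL_4$-factor) then produces an \emph{integral} element lying in $\W_p^{(1)}$ with the same discriminant. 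This gives a discriminant-preserving map from $G_\Z$-orbits on $\W_p^{(2)}$ to $G_\Z$-orbits on $\W_p^{(1)}$, which a short argument shows is at most $2$-to-$1$; hence $N(\cup_{p>M}\W_p^{(2)};X)\le 2N(\cup_{p>M}\W_p^{(1)};X)$, and the $\W_p^{(2)}$ bound follows from the $\W_p^{(1)}$ bound you already have. Without this (or an equivalent device), your argument for $\W_p^{(2)}$ does not close.
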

\begin{proof}
  Let $\W_p^{(1)}$ denote the set of elements in $(A,B)\in V_\Z$ whose
  discriminants are divisible by $p^2$ for (mod $p$) reasons; i.e.,
  $p^2$ divides the discriminant of $(A,B)+p(A',B')$ for every
  $(A',B')\in V_\Z$. For $\epsilon>0$, let
  $\FF^{(\epsilon)}\subset\FF$ denote the subset of elements
  $na(s_1,s_2,s_3,s_4)k\in\FF$ such that the $s_i$ are bounded
  above by an appropriate constant to ensure that
  $\Vol(\FF^{(\epsilon)})=(1-\epsilon)\Vol(\FF)$. Then
  $\FF^{(\epsilon)}\cdot R^{(i)}(X)$ is a bounded domain in
  $V_\R$ that expands homogeneously with $X$. By \cite[Theorem~3.3]{geosieve}, we obtain
\begin{equation}\label{unifest1}
\#\{\FF^{(\epsilon)}\cdot R^{(i)}(X)\bigcap (\cup_{p>M}\W^{(1)}_p)\}=O(X^{5/6}/(M\log M)+X^{19/24}).
\end{equation}
Also, the results of \S\ref{s31} and \S\ref{s32} imply that
\begin{equation}\label{unifest2}
\#\{(\FF\backslash\FF^{(\epsilon)})\cdot R^{(i)}(X)\bigcap V_\Z^\irr\}=O(\epsilon X^{5/6}).
\end{equation}
Combining the estimates \eqref{unifest1} and \eqref{unifest2} yields
\eqref{equnif} with $\W_p$ replaced with $\W^{(1)}_p$.

  Next, let $(A,B)$ be an element of
  $\W_p^{(2)}:=\W_p\backslash\W_p^{(1)}$ for some prime
  $p>2$. By definition, $v_p(\Delta(A,B))=v_p(\Delta(f))$, where
  $f=f_{A,B}$ is the binary quartic resolvent form of $(A,B)$. Thus $p^2$
  divides the discriminant of $f$, and since $(A,B)\notin\W_p^{(1)}$ we
  may assume that the reduction of $f$ modulo $p$ contains the square
  of a linear factor. By replacing $(A,B)$ with a $G_\Z$-translate, if
  necessary, we may further assume that $p^2$ divides the
  $x^4$-coefficient of $f(x,y)$ and $p$ divides the $x^3y$-coefficient
  of $f(x,y)$. This condition (along with the fact that
  $(A,B)\notin\W_p^{(1)}$) implies that we may assume
  $(A,B)=((a_{ij}),(b_{ij}))$ satisfies the following conditions: \pagebreak
  \begin{enumerate}
  \item $a_{12}\equiv a_{13}\equiv a_{14}\equiv b_{11}\equiv 0\pmod{p},$
  \item $a_{11}\equiv 0\pmod{p^2}.$
  \end{enumerate}
  If $\gamma_p\in G_\Q$ is defined by
  $$\gamma_p:=\left[\left(\begin{array}{cc}
{1} & {}\\ {} & {p}
\end{array}\right),
\left(\begin{array}{cccc}
    {p^{-1}} & {}&{}&{}\\ {} & {\!\!\!\!\!\!\!1}&{}&{}\\{} & {}&{1}&{}\\{} & {}&{}&{1}
\end{array}\right)\right],$$
then $\gamma_p\cdot(A,B)\in\W_p^{(1)}$ since it satisfies
$b_{22}\equiv b_{23}\equiv b_{24}\equiv b_{33}\equiv b_{34}\equiv
b_{44}\equiv 0\pmod{p}$. This yields a discriminant-preserving map
$\phi$ from $G_\Z$-orbits on $\W_p^{(2)}$ to $G_\Z$-orbits on
$\W_p^{(1)}$. The following lemma states that $\phi$ is at most $2$
to $1$:
\begin{lemma}
The map $\phi$ from $G_\Z$-orbits on $\W_p^{(2)}$ to
$G_\Z$-orbits on $\W_p^{(1)}$ is at most $2$ to~$1$.
\end{lemma}
\begin{proof}
  Let $(A,B)\in\W_p^{(1)}$ be any element in the image of $\phi$ and
  let $(\overline{A},\overline{B})$ denote its reduction modulo
  $p$. It is easy to see that $\phi^{-1}(A,B)$ is integral if and only
  if $b_{22}\equiv b_{23}\equiv b_{24}\equiv b_{33}\equiv b_{34}\equiv
  b_{44}\equiv 0\pmod{p}$. Therefore, the $G_\Z$-orbits on
  $\phi^{-1}(A,B)$ give rise to elements $[r:s]\in\P^1_{\F_p}$ along
  with a linear factor of the quadratic form corresponding to
  $r\overline{A}+s\overline{B}$. If there are two elements in
  $\P^1_{\F_p}$ such that the corresponding quadratic forms factor,
  then $(A,B)$ is $G_\Z$-equivalent to $(A_1,B_1)$, where the
  reductions of $A_1$ and $B_1$ modulo $p$ both factor over $\F_p$. We
  may thus assume that the bottom $3\times 3$ submatrix of $B_1$ is
  congruent to zero modulo $p$. If $(A_2,B_2)$ is
  $\gamma_p^{-1}(A_1,B_1)$, then we see that the reduction of $A_2$
  modulo $p$ also factors over $\F_p$, implying that
  $(A_2,B_2)\in\W_p^{(1)}$. Thus, $(A,B)$ cannot lie in the image of
  $\phi$ contradicting our hypothesis. Therefore, if $(A,B)$ is
  in the image of $\phi$, then there is exactly one element
  $[r:s]\in\P^1_{\F_p}$ such that the quadratic form corresponding to
  $r\overline{A}+s\overline{B}$ factors.

  We assume without loss of generality that $[r:s]=[0:1]$. If
  $\overline{B}$ has more than two linear factors, then
  $\overline{B}\equiv 0\pmod{p}$. Then it is easy to see that
  $\gamma_p^{-1}(A,B)\in\W_p^{(1)}$ because its binary quartic resolvent form
  is congruent to $0$ modulo $p$, again contradicting the hypothesis that $(A,B)$ is in the image of $\phi$. This concludes the proof of the
  lemma.
\end{proof}

Therefore, we obtain
\begin{equation}
  N\bigl(\displaystyle\cup_{p>M}\W^{(2)}_p(V),X\bigr)\leq 2N\bigl(\displaystyle\cup_{p>M}\W^{(1)}_p(V),X\bigr)=O_\epsilon(X^{5/6}/(M\log M)+X^{19/24})+O(\epsilon X^{5/6}),
\end{equation}
and Theorem \ref{thtailest} follows.
\end{proof}

Theorem \ref{thsqsieve} follows from Theorem \ref{thtailest} just as \cite[Theorem 2.21]{BS} followed from \cite[Theorem~2.13]{BS}.

\section{The average number of elements in the $4$-Selmer group of
  elliptic curves}

In this section, we prove Theorems \ref{mainellip} and \ref{ellipcong}
by computing the average size of the $4$-Selmer group of elliptic
curves over $\Q$, when these curves are ordered
by height.  In fact, we prove a generalization of these
theorems that allows us to average the size of the $4$-Selmer group of
elliptic curves whose defining equations satisfy certain {acceptable} sets
of local conditions. 

To state the theorem, we need the following
definitions. For any elliptic curve $E$ over $\Q$, we defined the
invariants $I(E)$ and $J(E)$ as in (\ref{eqEIJ}).  Let us denote the elliptic
curve having invariants $I$ and $J$ by $E^{I,J}$. 
Throughout his section we work with the slightly different height $H'$ on elliptic curves $E$, defined by
$$
H'(E):=H(I(E),J(E))=\max\{|I(E)^3|,J(E)^2/4\},
$$
so that the height on elliptic curves agrees with the height on $V_\Z$
defined in \eqref{eqABDH}. Note that since $H$ and $H'$ differ by a
constant factor of $4/27$, they induce the same ordering on the set of (isomorphism classes of) 
elliptic curves. 

For each prime $p$, let $\Sigma_p$ be a closed subset
of $\Z_p^2\backslash\{\Delta=0\}$ whose boundary has measure $0$. To
this collection $\Sigma=(\Sigma_p)_p$, we associate the family $F_\Sigma$ of elliptic
curves, where $E^{I,J}\in F_\Sigma$ if and only if $(I,J)\in \Sigma_p$
for all $p$. Such a family of elliptic curves over $\Q$ is said to be
{\it defined by congruence conditions}. We may also impose ``congruence
conditions at infinity'' on $F_\Sigma$ by insisting that an elliptic
curve $E^{I,J}$ belongs to $F_\Sigma$ if and only if $(I,J)$ belongs
to $\Sigma_\infty$, where $\Sigma_\infty$ is equal to
$\{(I,J)\in\R^2:\Delta(I,J)>0\}$, $\{(I,J)\in\R^2:\Delta(I,J)<0\}$, or
$\{(I,J)\in\R^2:\Delta(I,J)\neq 0\}$.

For such a family $F$ of elliptic curves defined by congruence conditions, let $\Inv(F)$ denote the set
$\{(I,J)\in\Z\times\Z:E^{I,J}\in F\}$, and let $\Inv_p(F)$ to be the
$p$-adic closure of $\Inv(F)$ in $\Z_p^2\backslash\{\Delta=0\}$.
Similarly, we define $\Inv_\infty(F)$ to be
$\{(I,J)\in\R^2:\Delta(I,J)>0\}$,
$\{(I,J)\in\R^2:\Delta(I,J)<0\}$, or $\{(I,J)\in\R^2:\Delta(I,J)\neq
0\}$ in accordance with whether $F$ contains only curves of positive
discriminant, negative discriminant, or both, respectively.
Such a family $F$ of elliptic curves is said to be {\it large} if, for
all but finitely many primes $p$, the set $\Inv_p(F)$ contains at
least those pairs $(I,J)\in\Z_p\times\Z_p$ such that $p^2\nmid
\Delta(I,J)$. 
Our purpose in this section is to prove the
following theorem which generalizes Theorems \ref{mainellip} and
\ref{ellipcong}.

\begin{theorem}\label{ellipall}
  Let $F$ be a large family of elliptic curves. When elliptic
  curves $E$ in $F$ are ordered by height, the average size of the
  $4$-Selmer group $S_4(E)$ is $7$.
\end{theorem}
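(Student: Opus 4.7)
The plan is to convert the problem to an orbit-counting problem via Proposition \ref{propselparz} and then to analyze separately the contributions of strongly irreducible and non-strongly irreducible orbits, which will be shown to contribute $4$ and $3$ respectively on average, summing to $7$.

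For each elliptic curve $E = E^{I,J}$ in $F$, Proposition \ref{propselparz} identifies $S_4(E)$ with the set of $G_\Q$-equivalence classes of locally soluble pairs $(A,B) \in V_\Z$ with invariants $(I,J)$, and identifies exact-order-$4$ elements with strongly irreducible orbits. Accordingly, the average of $|S_4(E)|$ over $F$ splits as the average number of exact-order-$4$ elements plus the average number of elements of order at most $2$. To handle the first summand, for each prime $p$ I would define $\phi_p : V_{\Z_p} \to [0,1]$ by $\phi_p(A,B) = 1/m_p(A,B)$ when $(I(A,B), J(A,B)) \in \Inv_p(F)$ and $(A,B)$ is $\Q_p$-soluble, and $\phi_p(A,B) = 0$ otherwise; here $m_p(A,B)$ is the number of $G_{\Z_p}$-orbits inside the $G_{\Q_p}$-orbit of $(A,B)$, weighted by the ratio of $\Z_p$- and $\Q_p$-stabilizer sizes. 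The product $\phi := \prod_p \phi_p$ is acceptable in the sense of Section \ref{s36}: for large primes $p$, the set $\{(A,B) \in V_{\Z_p} : p^2 \nmid \Delta\}$ is automatically $\Q_p$-soluble, each $G_{\Q_p}$-orbit there contains a single $G_{\Z_p}$-orbit with trivial stabilizer, and largeness of $F$ forces the congruence condition to be met on this set, so $\phi_p = 1$ there.

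Combining Theorems \ref{thsec2main} and \ref{thsqsieve} gives the weighted count of $G_\Z$-orbits on strongly irreducible locally soluble pairs in $V_\Z$ with invariants in $\Inv(F)$ and height at most $X$; by construction of $\phi$, this equals the count of the corresponding $G_\Q$-orbits, i.e.\ the total number $\sum_{E \in F,\, H'(E)\leq X} \#\{x \in S_4(E) : \mathrm{ord}(x) = 4\}$. Dividing by the count of such $E$, which is asymptotically $\prod_p \mu_p(\Inv_p(F)) \cdot N^\pm(X)$, the ratio becomes
\[
|\J| \cdot \Vol(G_\Z \backslash G_\R) \cdot \prod_p \frac{\int_{V_{\Z_p}} \phi_p(v)\, dv}{\mu_p(\Inv_p(F))}
\]
up to an elementary combinatorial factor. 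I expect this to collapse via the product formula for Tamagawa measures to $\tau(\PGL_4(\Q)) = 4$, cancelling the unknown constant $|\J|$ entirely. For the order-$\leq 2$ summand, use that $100\%$ of elliptic curves over $\Q$ have $E(\Q)[2] = 0$; for such $E$, the Kummer sequence for $0 \to E[2] \to E[4] \xrightarrow{\times 2} E[2] \to 0$ yields an injection $S_2(E) \hookrightarrow S_4(E)$ whose image is the subgroup $S_4(E)[2]$ of $S_4$-elements of order dividing $2$ (up to bad-reduction contributions that are negligible on average). The average of $|S_2(E)|$ over $F$ is $3$ by \cite{BS}, extended to large families $F$ via a squarefree sieve argument parallel to Theorem \ref{thsqsieve}. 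Summing gives $4 + 3 = 7$.

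The main obstacle I expect is the local density computation in the exact-order-$4$ analysis: explicit evaluation of each integral $\int_{V_{\Z_p}} \phi_p(v)\, dv$ via orbit-counting with Galois cohomology of $E[4]$ over $\Q_p$, and the identification of the resulting infinite product with $\tau(\PGL_4(\Q))$ through the Tamagawa product formula, so that the archimedean factor $|\J|\cdot\Vol(G_\Z\backslash G_\R)$ combines cleanly with the local integrals. A secondary difficulty is verifying the acceptability of $\phi$: in particular, at large primes $p$ the orbit-splitting correction $m_p$ must equal $1$ on $\{p^2 \nmid \Delta\}$ and local solubility must be automatic there, both of which ultimately trace back to good reduction of the Jacobian of the associated genus-one curve in $\P^3$.
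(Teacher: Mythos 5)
Your proposal is correct and follows essentially the same route as the paper: split $\#S_4(E)$ into exact-order-$4$ elements (handled via the weight function $\phi = \prod_p \phi_p$, acceptability, Theorem~\ref{thsqsieve}, and the Tamagawa number $\tau(G_\Q)=4$, with $|\J|$ cancelling by the product formula) plus elements of order dividing $2$ (identified with $S_2(E)$ via the Kummer sequence for curves with trivial rational $2$-torsion, averaging to $3$ by \cite{BS}), then sum to get $7$. The only places where the paper supplies detail you summarize impressionistically are Proposition~\ref{propmasseval}, which evaluates $\int_{V_{\Z_p}}\phi_p$ as $|\J|_p\Vol(G_{\Z_p})\Vol(\Inv_p(F))$ times $\#(E(\Q_p)/4E(\Q_p))/\#(E(\Q_p)[4])$ (which is $1$ for $p\neq 2$ and $4$ for $p=2$, the latter cancelling the $\tfrac14$ coming from the stabilizer orders $n_i$), and Proposition~\ref{propstrongnottot}, which is what lets one replace your weight by $\#B(x)^{-1}$ on a full-density set.
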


\subsection{Computation of $p$-adic densities}\label{s41}
Throughout the rest of this section, we fix $F$ to be a large
family of elliptic curves. 
Proposition~\ref{propselparz} asserts that elements in the
$4$-Selmer group of the elliptic curve $E^{I,J}$ over $\Q$ are in
bijection with $G_\Q$-equivalence classes on the set of locally
soluble elements in $V_\Z$ having invariants $I$ and $J$.
Furthermore, elements of exact order $4$ in the $4$-Selmer group of
$E^{I,J}$ are in bijection with strongly irreducible
$G_\Q$-equivalence classes in the set of locally soluble elements in
$V_\Z$ having invariants $I$ and $J$.

In Section 2, we computed the asymptotic number of $G_\Z$-equivalence
classes of strongly irreducible elements in $V_\Z$ having bounded
height.  In order to use this to compute the number of
$G_\Q$-equivalence classes of strongly irreducible locally soluble
elements of $V_\Z$ having bounded height and invariants in $\Inv(F)$,
we count each strongly irreducible $G_\Z$-orbit $G_\Z\cdot x$
weighted by $\phi(x)$, where $\phi:V_\Z\to\R$ is a $G_\Z$-invariant
function that we now define.

For $x\in V_\Z$, let $B(x)$ denote a set of representatives for the
action of $G_\Z$ on the $G_\Q$-equivalence class of $x$ in $V_\Z$. We
define our weight function $\phi$ to be:
\begin{equation}\label{eqmx}
  \phi(x):=
  \begin{cases}
    \Bigl(\displaystyle\sum_{x'\in
      B(x)}\frac{\#\Aut_\Q(x')}{\#\Aut_\Z(x')}\Bigr)^{-1}
    \qquad&\text{if $x$ is locally soluble and $(I(x),J(x))\in
      \Inv_p(F)$ for all $p$;}\\[.1in]
    \qquad\qquad 0 \qquad&\text{otherwise},
  \end{cases}
\end{equation}
where $\Aut_\Q(x)$ and $\Aut_\Z(x)$ denote the stabilizers of $x\in
V_\Z$ in $G_\Q$ and $G_\Z$, respectively.

Note that if $x\in V_\Z$ has a trivial stabilizer in $G_\Q$, is locally soluble,
and satisfies $(I(x),J(x))\in \Inv(F)$, then $\phi(x)=\#B(x)^{-1}$. Thus,
Proposition~\ref{propstrongnottot} implies the following result:
\begin{proposition}\label{propprelimfinal}
  Let $F$ be a large family of elliptic curves. Following the notation
  of Theorem $\ref{cong3}$ and $\ref{thsqsieve}$, we have
$$
\sum_{\substack{E\in F\\H'(E)<X}}\#\{\sigma\in S_4(E):\sigma^2\neq 1\}
=N_{\phi}(V_\Z;X)+o(X^{5/6}).
$$
\end{proposition}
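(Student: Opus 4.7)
The plan is to translate the left-hand side, a count of $G_\Q$-equivalence classes of Selmer elements, into the weighted count $N_\phi(V_\Z;X)$ of $G_\Z$-orbits on $V_\Z$. The weight $\phi$ appears to be engineered so that, when summed across the $G_\Z$-orbits contained in a single $G_\Q$-orbit, it totals to~$1$ on the ``generic'' locus where the $G_\Q$-stabilizer is trivial, converting a weighted $G_\Z$-count into the desired $G_\Q$-count.

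First, I would invoke Proposition~\ref{propselparz}: elements $\sigma\in S_4(E^{I,J})$ with $\sigma^2\neq 1$ correspond bijectively to $G_\Q$-equivalence classes of strongly irreducible, locally soluble elements of $V_\Z$ having invariants $I$ and $J$. Summing over $E^{I,J}\in F$ with $H'(E^{I,J})<X$, i.e.\ over $(I,J)\in\Inv(F)$ with $H(I,J)<X$, the left-hand side becomes the number of $G_\Q$-orbits $G_\Q\cdot x$ on strongly irreducible, locally soluble $x\in V_\Z$ with $(I(x),J(x))\in\Inv(F)$ and $H(x)<X$.

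Next, I would decompose each such $G_\Q$-orbit into its constituent $G_\Z$-orbits, represented by the set $B(x_0)$, and weight each by $\phi$. Since $B(x')=B(x_0)$ for every $x'\in B(x_0)$, the value $\phi(x')$ is constant on the $G_\Q$-orbit, so
\[
\sum_{x'\in B(x_0)}\phi(x') \;=\; \#B(x_0)\cdot\phi(x_0) \;=\; \frac{\#B(x_0)}{\displaystyle\sum_{x'\in B(x_0)}\#\Aut_\Q(x')\big/\#\Aut_\Z(x')}.
\]
The inclusion $\Aut_\Z(x')\subseteq\Aut_\Q(x')$ shows each denominator summand is $\ge 1$, with equality exactly when $\Aut_\Q(x')$ is trivial. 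Hence for every $G_\Q$-orbit with trivial $G_\Q$-stabilizer we have $\phi(x')=\#B(x_0)^{-1}$ and the sum is~$1$, so such an orbit is matched correctly between the two sides.

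The main obstacle, as is typical in this kind of orbit-counting, is handling orbits whose $G_\Q$-stabilizer is nontrivial, where the identity above fails. The crucial tool is Proposition~\ref{propstrongnottot}, which bounds the number of strongly irreducible $G_\Z$-orbits with nontrivial $G_\Q$-stabilizer and height below~$X$ by $o(X^{5/6})$. Since $\phi\in[0,1]$ and each such orbit contributes at most~$1$ to either side, discarding these orbits perturbs both $N_\phi(V_\Z;X)$ and the left-hand side by at most $o(X^{5/6})$. Combining these observations yields the proposition up to an error of $o(X^{5/6})$.
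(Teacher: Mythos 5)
Your proof is correct and follows essentially the same route as the paper: invoke Proposition~\ref{propselparz} to translate the left-hand side into a count of $G_\Q$-orbits, observe that when $\Aut_\Q(x)$ is trivial the weight satisfies $\phi(x)=\#B(x)^{-1}$ so each such $G_\Q$-orbit contributes exactly $1$ to $N_\phi$, and control the nontrivial-stabilizer orbits with Proposition~\ref{propstrongnottot}. (One tiny slip: the denominator summand $\#\Aut_\Q(x')/\#\Aut_\Z(x')$ equals $1$ precisely when $\Aut_\Q(x')=\Aut_\Z(x')$, not ``exactly when $\Aut_\Q(x')$ is trivial''; but since you only use the implication in one direction and then discard all nontrivial-stabilizer orbits anyway, this does not affect the argument.)
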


To evaluate the right hand side of the above equation using
Theorem \ref{thsqsieve}, we need to show that the weight function
$\phi$ is acceptable in the sense of Section 3.6. To this end, we define local
functions $\phi_p:V_{\Z_p}\to\R$ as follows.  For $x\in V_{\Z_p}$, let
$B_p(x)$ denote a set of representatives for the action of $G_{\Z_p}$
on the $G_{\Q_p}$-equivalence class of $x$ in $V_{\Z_p}$. Then we
define
\begin{equation}\label{eqmxp}
  \phi_p(x):=
  \begin{cases}
    \Bigl(\displaystyle\sum_{x'\in B_p(x)}\frac{\#\Aut_{\Q_p}(x')}{\#\Aut_{\Z_p}(x')}\Bigr)^{-1} &\text{if $x$ is soluble over $\Q_p$ and $(I(x),J(x))\in \Inv_p(F)$;}\\[.1in]
    \qquad 0 \qquad&\text{otherwise},
  \end{cases}
\end{equation}
where $\Aut_{\Q_p}(x)$ and $\Aut_{\Z_p}(x)$ denote the stabilizer of
$x\in V_{\Z_p}$ in $G_{\Q_p}$ and $G_{\Z_p}$, respectively.
Before we prove that $\phi$ is acceptable, we need the following lemma:
\begin{lemma}\label{lemsadek}
  For sufficiently large primes $p$, if $(A,B)\in V_{\Z_p}$ satisfies $\phi_p(A,B)\neq 1$, then the discriminant of $(A,B)$ is
  divisible by $p^2$.
\end{lemma}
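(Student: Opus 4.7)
The plan is to prove the contrapositive: if $p$ is large and $p^2\nmid\Delta(A,B)$, then $\phi_p(A,B)=1$. Unpacking the definition (\ref{eqmxp}), this amounts to three assertions: (a) $(A,B)$ is $\Q_p$-soluble; (b) $(I(A,B),J(A,B))\in\Inv_p(F)$; and (c) the mass sum $\sum_{x'\in B_p(A,B)}\#\Aut_{\Q_p}(x')/\#\Aut_{\Z_p}(x')$ equals~$1$. Assertion (b) is immediate from the definition of a large family, which stipulates that for all but finitely many primes~$p$, the set $\Inv_p(F)$ contains every pair $(I,J)$ with $p^2\nmid\Delta(I,J)$.

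For (a), the plan is a standard smoothness-plus-Hensel argument. Since $\Delta(A,B)=\Delta(f_{A,B})$ and $p^2\nmid\Delta(A,B)$, the common zero locus $C\subset\P^3_{\Z_p}$ of the two quadrics cut out by $A$ and $B$ is smooth over $\F_p$ away from at most one point. For $p$ sufficiently large, a Lang--Weil estimate produces a smooth $\F_p$-point of $C$; Hensel's lemma then lifts this point to a $\Z_p$-point, yielding $\Q_p$-solubility.

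Assertion (c) contains the main substance. Equality of the mass sum with~$1$ is equivalent to the conjunction of two statements: that $\Aut_{\Q_p}(A,B)=\Aut_{\Z_p}(A,B)$, and that the $G_{\Q_p}$-orbit of $(A,B)$ meets $V_{\Z_p}$ in a single $G_{\Z_p}$-orbit. By Proposition~\ref{propparamfield}, the stabilizer is the $4$-torsion group scheme $E[4]$ of the Jacobian elliptic curve $E=E^{I,J}$. Since $p^2\nmid\Delta(E)$ and $p>2$, the curve $E$ has good or multiplicative reduction at~$p$, and in either case $E[4]$ is \'etale over~$\Z_p$; this \'etaleness immediately gives the stabilizer equality. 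To control the number of $G_{\Z_p}$-orbits within the $G_{\Q_p}$-orbit, I would parametrize these orbits by a pointed set in Galois cohomology of $E[4]$ and invoke the Cremona--Fisher--Stoll theorem \cite{CFS} locally at~$p$ to produce a canonical integral representative for each locally soluble $4$-covering, forcing the pointed set to be a singleton.

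The main obstacle I anticipate is the multiplicative reduction case, where one must verify carefully that the cohomological parametrization really does collapse to a single class; in the good reduction case everything is controlled by the N\'eron model, whereas in the multiplicative case one has to account for the ramification behaviour of $E[4]$ at~$p$ and confirm that no additional integral orbits arise from ramified torsors.
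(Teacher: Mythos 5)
Your decomposition into three subclaims (solubility, invariants lying in $\Inv_p(F)$, and the mass sum equalling $1$) is exactly the paper's decomposition, and your treatment of the first two coincides with the paper's: the $\Inv_p(F)$ condition follows immediately from the definition of a large family, and $\Q_p$-solubility is obtained from Lang--Weil plus Hensel, with the geometrically reducible case absorbed into the conclusion $p^2 \mid \Delta$.

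The divergence, and the gap, is in (c). The paper dispatches this step by citing Sadek's Corollary~2.2 from \cite{Sadek1}, which directly asserts that if $(A,B)$ is $\Q_p$-soluble and either the relevant stabilizer condition fails or the $G_{\Q_p}$-orbit of $(A,B)$ splits into more than one $G_{\Z_p}$-orbit, then the reduction type of $E^{I,J}$ at $p$ is neither $I_0$ nor $I_1$ --- forcing $p^2 \mid \Delta$. That single citation handles both halves of your claim (c) at once. Your proposal instead attempts to reconstruct this fact from first principles, and the two halves you sketch each have real gaps. For the stabilizer equality $\Aut_{\Q_p}(A,B) = \Aut_{\Z_p}(A,B)$, you invoke \'etaleness of $E[4]$ over $\Z_p$; but in the multiplicative case the N\'eron model of $E[4]$ is not finite flat over $\Z_p$, so the \'etaleness statement is not available in the form you need, and in any case \'etaleness of the group scheme $E[4]$ does not by itself force elements of the finite subgroup $\Aut_{\Q_p}(A,B) \subset G_{\Q_p}$ into $G_{\Z_p}$ --- that is a separate integrality statement about the specific representation $V$, which is precisely what Sadek's result supplies. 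For the claim $\#B_p(A,B)=1$, your appeal to ``a pointed set in Galois cohomology of $E[4]$'' together with ``CFS locally at $p$'' is a sketch of a plausible programme but is not an argument: the Cremona--Fisher--Stoll minimisation theorem you cite is a global statement, and the local uniqueness of integral orbits is exactly the content of the counting-of-models result in \cite{Sadek1} that the paper uses. You correctly anticipate that the multiplicative case is where the difficulty lies; the paper resolves that difficulty by outsourcing it to an existing theorem rather than by the N\'eron-model/ramification analysis you propose, which as written would require substantial additional work to be made rigorous.
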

\begin{proof}
  Since $F$ is a large family of elliptic curves, we know that
  for large enough primes $p$, if $(I,J):=(I(A,B),J(A,B))\notin
  \Inv_p(F)$, then $p^2\mid\Delta(A,B)$.

  Now suppose that $(I,J)\in \Inv_p(F)$ but $\phi_p(A,B)\neq
  1$. Then either $(A,B)$ is not soluble over~$\Q_p$,
  $\Aut_{\Q_p}(A,B)$ is not trivial, or $B_p(A,B)$ has size at least
  two. Let $C\in\P^3_{\F_p}$ be the curve cut out by the intersection
  of the quadrics defined by the reductions of $A$ and $B$ modulo
  $p$. The Lang-Weil estimates \cite{LW} imply that, for sufficiently
  large primes $p$, either $C$ is geometrically reducible or $C$
  has a smooth $\F_p$-point. Thus either
  $p^2$ divides the discriminant of $(A,B)$ or $(A,B)$ is locally
  soluble.

  Finally, \cite[Corollary 2.2]{Sadek1} implies that if $(A,B)$ is
  soluble and either $\Aut_{\Q_p}(A,B)$ is nontrivial or
  $\#B_p(A,B)>1$, then the reduction type of the elliptic curve
  $E^{I,J}$ over $\Q_p$ is not $I_0$ or $I_1$. This implies that
  $p^2\mid \Delta(E^{I,J})=\Delta(A,B)$,
  as desired.
\end{proof}

This leads us to the following proposition:
\begin{proposition}\label{propacceptable}
  The function $\phi:V_\Z^\irr\to\R$ is acceptable.
\end{proposition}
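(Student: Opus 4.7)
The plan is to verify the two defining conditions of acceptability. The tail condition, namely that $\phi_p(A,B)=1$ whenever $p$ is sufficiently large and $p^2\nmid\Delta(A,B)$, is exactly the statement of Lemma \ref{lemsadek}, so no further work is required there. What remains is to check that $\phi$ is defined by congruence conditions via the local functions $\phi_p$ of \eqref{eqmxp}, i.e., to verify both the product formula $\phi(x)=\prod_p\phi_p(x)$ for every $x\in V_\Z$, and the local constancy of each $\phi_p$ outside a measure-zero closed set.

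For local constancy of $\phi_p$, I would restrict to the open set where $\Delta(x)\neq 0$ and $(I(x),J(x))$ lies in the interior of $\Inv_p(F)$. On this locus, $\Q_p$-solubility is a locally constant condition by Hensel's lemma, and the quantities $\#\Aut_{\Q_p}(x')$, $\#\Aut_{\Z_p}(x')$, and $\#B_p(x)$ depend only on $x$ modulo a sufficiently large power of $p$, as they are determined by the reduction type of $E^{I(x),J(x)}$ over $\Q_p$. The complement (where $\Delta=0$ or $(I,J)$ lies on the boundary of $\Inv_p(F)$) has measure zero by hypothesis.

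The core step is the product formula. The local solubility and $\Inv_p(F)$-membership conditions factor trivially into their local components. The essential content is the mass identity
\[
\sum_{x'\in B(x)}\frac{\#\Aut_\Q(x')}{\#\Aut_\Z(x')}\;=\;\prod_{p}\sum_{x'\in B_p(x)}\frac{\#\Aut_{\Q_p}(x')}{\#\Aut_{\Z_p}(x')},
\]
which expresses the reciprocal of $\phi(x)$ as a product of the reciprocals of $\phi_p(x)$. This is a standard local-global decomposition: the set of $G_\Z$-orbits within a fixed $G_\Q$-orbit on $V_\Z$, weighted by $\#\Aut_\Q/\#\Aut_\Z$, equals the product over primes $p$ of the analogous local counts on $V_{\Z_p}$. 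In the present setting this follows from the orbit parametrization theory developed in \cite{BhHo}, together with the fact that $\Aut_\Q(x)\cong E^{I,J}[4]$ is a finite \'etale group scheme whose local points control the local orbit structure at each place. By Lemma \ref{lemsadek}, all but finitely many local factors on the right-hand side equal $1$, so the product converges absolutely.

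The main obstacle is verifying the mass identity with the correct normalizations: one must be careful because $G$ is a quotient group and the stabilizer $\Aut(x)\cong E[4]$ can be nontrivial, so the bookkeeping involves keeping track of finite \'etale cohomology contributions at each prime. Once the mass identity is in hand, Proposition \ref{propacceptable} follows immediately by combining it with the local constancy statement above and Lemma \ref{lemsadek}.
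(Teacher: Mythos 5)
Your proposal has the same three-part structure as the paper's proof: (i) local constancy of each $\phi_p$ away from a measure-zero set, (ii) the product formula $\phi(x)=\prod_p\phi_p(x)$, and (iii) Lemma~\ref{lemsadek} supplying the acceptability tail condition. Parts (i) and (iii) are handled correctly and essentially as in the paper. The genuine gap is in part (ii), the product formula.

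You correctly reduce the product formula to the mass identity
\[
\sum_{x'\in B(x)}\frac{\#\Aut_\Q(x')}{\#\Aut_\Z(x')}\;=\;\prod_{p}\sum_{x'\in B_p(x)}\frac{\#\Aut_{\Q_p}(x')}{\#\Aut_{\Z_p}(x')},
\]
but you misidentify what makes it true. You attribute the content to ``\'etale cohomology contributions'' coming from $\Aut(x)\cong E^{I,J}[4]$, which is really about the bookkeeping \emph{within} a fixed local orbit, not about the local-global statement itself. The actual crux — which the paper cites explicitly alongside the reference to \cite[Proposition 3.6]{BS} — is that the class number of $G_\Q$ is $1$: this is what ensures that a $G_\Q$-orbit on $V_\Z$ decomposes into $G_\Z$-orbits in a way that is faithfully recorded by the collection of $G_{\Z_p}$-orbit decompositions of the $G_{\Q_p}$-orbit at every $p$. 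Without that fact (or an equivalent strong-approximation statement for $G$), the map from global $G_\Z$-orbits to tuples of local $G_{\Z_p}$-orbits need not be surjective, and the mass identity would fail. Your proposal acknowledges uncertainty about ``correct normalizations,'' but the obstacle you flag is not the one that needs to be cleared; identifying and invoking the class number $1$ property of $G$ is the missing step.
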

\begin{proof}
  The local weight functions $\phi_p$ are supported and locally constant outside the
  set of elements in $V_{\Z_p}$ having discriminant zero. That
  $\phi(A,B)=\prod_p\phi_p(A,B)$, for $(A,B)\in V_\Z$, follows from an
  argument identical to the proof of \cite[Proposition 3.6]{BS} and
  the fact that the class number of $G_\Q$ is $1$. Lemma~\ref{lemsadek} then
implies that $\phi$ is acceptable.
\end{proof}

We end the section with a proposition that evaluates $\int_{V_{\Z_p}}\phi_p(x)dx$.
\begin{proposition}\label{propmasseval}
  We have
  \begin{equation*}
    \begin{array}{rl}
    \displaystyle\int_{x\in V_{\Z_p}}\phi_p(x)dx&=\;\;|\J|_p\cdot\Vol(G_{\Z_p})\cdot\displaystyle\int_{(I,J)\in \Inv_p(F)}\displaystyle\frac{\#(E^{I,J}(\Q_p)/4E^{I,J}(\Q_p))}{\#(E^{I,J}(\Q_p)[4])}\\[.25in]&=\;\;
    \begin{cases}
      \phantom{55\cdot}\!|\J|_p\cdot\Vol(G_{\Z_p})\cdot\Vol(\Inv_p(F))\quad\text{if $p\neq 2$};\\[.05in]
      4\cdot|\J|_p\cdot\Vol(G_{\Z_p})\cdot\Vol(\Inv_p(F))\quad\text{if $p=2$},
    \end{cases}
  \end{array}
  \end{equation*}
where the volume of $\Inv_p(F)\subset\Z_p\times\Z_p$ is taken with
respect to the additive 
Haar measure on $\Z_p\times\Z_p$ normalized so that $\Vol(\Z_p\times\Z_p)=1$.
\end{proposition}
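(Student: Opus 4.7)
The plan is to prove Proposition~\ref{propmasseval} by a $p$-adic orbit-counting identity, organizing $\int_{V_{\Z_p}}\phi_p\,dv$ fiber-by-fiber over the invariant map $(I,J)$, invoking the parametrization of Proposition~\ref{propparamfield} on each fiber, and concluding with a standard formula for $\#E(\Q_p)/nE(\Q_p)$ that produces the case split at $p=2$.

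First, I would record the $p$-adic analogue of Proposition~\ref{bqjac}: since the Jacobian constant $\J$ is rational and arises from an algebraic change-of-variables for $G\times R^{(i)}\to V$ defined over $\Q$, the same rational number governs the $p$-adic identity
$$\int_{V_{\Q_p}}\!f(v)\,dv=|\J|_p\!\int\!dI\,dJ\sum_{O}\frac{1}{\#\Aut_{\Q_p}(v_O)}\int_{G_{\Q_p}}\!f(g\cdot v_O)\,\omega_p(g),$$
where $O$ runs over $G_{\Q_p}$-orbits of elements with given invariants $(I,J)$ and $v_O$ is a chosen representative. I apply this to $f=\phi_p\cdot\mathbf{1}_{V_{\Z_p}}$; since the defining sum for $\phi_p(x)$ depends only on the $G_{\Q_p}$-orbit of $x$, the function $\phi_p$ is $G_{\Q_p}$-invariant on its support, and by definition it vanishes outside soluble orbits with invariants in $\Inv_p(F)$.

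Next, for a soluble orbit $O=G_{\Q_p}\cdot v_O$ with stabilizer $H_O=\Aut_{\Q_p}(v_O)$, the orbit map $G_{\Q_p}\to O$ is a principal $H_O$-bundle and $O\cap V_{\Z_p}=\bigsqcup_{v'\in B_p(v_O)}G_{\Z_p}\cdot v'$, giving
$$\int_{G_{\Q_p}}\mathbf{1}_{V_{\Z_p}}(g\cdot v_O)\,\omega_p(g)=\#H_O\cdot\sum_{v'\in B_p(v_O)}\frac{\Vol(G_{\Z_p})}{\#\Aut_{\Z_p}(v')}.$$
Since every $v'\in B_p(v_O)$ is $G_{\Q_p}$-conjugate to $v_O$, one has $\#\Aut_{\Q_p}(v')=\#H_O$; substituting the identity $\phi_p(v_O)=\bigl(\#H_O\sum_{v'}1/\#\Aut_{\Z_p}(v')\bigr)^{-1}$ causes the inner sums to collapse to the single factor $\Vol(G_{\Z_p})/\#H_O$ per orbit. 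By Proposition~\ref{propparamfield}, the number of soluble $G_{\Q_p}$-orbits with invariants $(I,J)$ equals $\#(E^{I,J}(\Q_p)/4E^{I,J}(\Q_p))$, and each $H_O\cong E^{I,J}(\Q_p)[4]$, yielding
$$\int_{V_{\Z_p}}\phi_p\,dv=|\J|_p\cdot\Vol(G_{\Z_p})\cdot\int_{\Inv_p(F)}\frac{\#\bigl(E^{I,J}(\Q_p)/4E^{I,J}(\Q_p)\bigr)}{\#E^{I,J}(\Q_p)[4]}\,dI\,dJ,$$
which is the first equality.

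For the second equality, I invoke the standard local formula $\#E(\Q_p)/nE(\Q_p)=|n|_p^{-1}\cdot\#E(\Q_p)[n]$, which follows from the structure $E(\Q_p)$ having a finite-index pro-$p$ subgroup isomorphic to $\Z_p$ together with the snake lemma applied to multiplication-by-$n$; for $n=4$ the ratio is $1$ for odd $p$ and $4$ for $p=2$, producing the stated dichotomy. The chief technical point is the justification of the $p$-adic Jacobian formula with the \emph{same} rational constant $\J$ defined in Section~\ref{s33}; this hinges on the fact that the algebraic identity of top-degree differential forms relating $\omega\wedge dI\wedge dJ$ to the Euclidean form on $V$ is defined over $\Q$ (hence localizes identically at every completion). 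Once this is granted, the remainder of the argument is bookkeeping, and the identification $H_O\cong E^{I,J}(\Q_p)[4]$ together with the orbit count is supplied directly by Proposition~\ref{propparamfield}.
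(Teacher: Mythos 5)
Your proof is correct and follows essentially the same route as the paper, which simply cites \cite[Proposition 3.9]{BS} for the first equality (the $p$-adic Jacobian/orbit-counting argument you spell out) and \cite[Lemma 3.1]{BK} for the second (the formula $\#E(\Q_p)/nE(\Q_p)=|n|_p^{-1}\,\#E(\Q_p)[n]$). You have merely unwound the details of those two cited arguments.
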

The first equality in Proposition~\ref{propmasseval} follows from an
argument identical to \cite[Proposition 3.9]{BS}. The second follows
from an argument identical to the proof of \cite[Lemma 3.1]{BK},
yielding that $\#(E^{I,J}(\Q_p)/4E^{I,J}(\Q_p))$ is equal to
$\#(E^{I,J}(\Q_p)[4])$ when $p\neq 2$ and equal to
$4\#(E^{I,J}(\Q_p)[4])$ when $p=2$.

\subsection{The proof of the main theorem (Theorem \ref{ellipall})}\label{s42}

We first state a theorem, proved in \cite[Theorem 3.17]{BS}, that counts
the number of elliptic curves having bounded height in a large family $F$.
\begin{theorem}\label{thellipcurvecount}
  Let $F$ be a large family of elliptic curves and let $N(F;X)$
  denote the number of elliptic curves in $F$ that have height bounded
  by $X$. Then
  \begin{equation}\label{eqnumelip}
    N(F;X)=\Vol(\Inv_\infty(F;X))\prod_p\Vol(\Inv_p(F))+o(X^{5/6}),
  \end{equation}
  where $\Inv_\infty(F;X)$ denotes the set of elements in
  $\Inv_\infty(F)$ that have height bounded by $X$.
\end{theorem}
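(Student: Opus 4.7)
The plan is to recast $N(F; X)$ as a lattice-point count on integer pairs $(A, B) \in \Z \times \Z$ and to evaluate it by the geometry of numbers together with a squarefree sieve on the discriminant polynomial $\Delta(A, B) = -16(4A^3 + 27B^2)$. Each elliptic curve $E \in F$ with $H'(E) < X$ corresponds uniquely to a pair $(A, B) \in \Z^2$ satisfying the Weierstrass minimality condition ($p^4 \mid A \Rightarrow p^6 \nmid B$ for every prime $p$), the congruence conditions $(I, J) = (-3A, -27B) \in \Inv_p(F)$ for every prime $p$, the sign condition $(I, J) \in \Inv_\infty(F)$, and $H(A, B) < X$.

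Fix a cutoff $M$. For the truncated count incorporating only the conditions at primes $p \leq M$, I would apply Davenport's lattice-point lemma (Proposition \ref{propdavenport}) to the homogeneously expanding semi-algebraic region $\{(A, B) \in \R^2 : (I, J) \in \Inv_\infty(F),\ H(A, B) < X\}$, intersected with the appropriate residue classes modulo $\prod_{p \leq M} p^{a_p}$. Since this region has volume $\Vol(\Inv_\infty(F; X)) = \Theta(X^{5/6})$ by (\ref{eqnij}) and its lower-dimensional coordinate projections have volume $O(X^{1/2})$, Davenport's lemma gives the main term $\Vol(\Inv_\infty(F; X)) \prod_{p \leq M} \Vol(\Inv_p(F))$ with error $O_M(X^{1/2})$. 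Accounting for the Weierstrass minimality condition is a standard further sieve inclusion-exclusion that can be absorbed into the $\Inv_p(F)$ at $p \leq M$ and controlled identically in the tail.

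For the tail beyond $p = M$, the largeness of $F$ ensures that, for all but finitely many primes $p$, the set $\Inv_p(F)$ contains every $(I, J) \in \Z_p^2$ with $p^2 \nmid \Delta$. Thus the sieve error at primes $p > M$ is bounded by the number of $(A, B) \in \Z^2$ with $H(A, B) < X$ and $p^2 \mid \Delta(A, B)$ for some $p > M$. The key ingredient is a uniform squarefree sieve of the shape $O(X^{5/6}/M) + O_\epsilon(X^{5/6 - \delta})$, obtained by the geometry-of-numbers techniques of \cite{geosieve} applied to the two-variable discriminant polynomial $4A^3 + 27B^2$; this parallels Theorem \ref{thtailest} in the present setting and is the main technical obstacle to the proof. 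Once this tail bound is in hand, letting $M \to \infty$ converts $\prod_{p \leq M} \Vol(\Inv_p(F))$ into the convergent Euler product $\prod_p \Vol(\Inv_p(F))$ and produces the claimed asymptotic $N(F; X) = \Vol(\Inv_\infty(F; X)) \prod_p \Vol(\Inv_p(F)) + o(X^{5/6})$.
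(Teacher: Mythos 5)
The paper does not prove this theorem; it is imported verbatim from \cite[Theorem~3.17]{BS}, as the sentence immediately preceding its statement indicates. Your sketch---reducing to a lattice-point count over $(A,B)\in\Z^2$ in a homogeneously expanding box (whose one-dimensional coordinate projections are $O(X^{1/2})$), applying Davenport's lemma to handle the congruence conditions at primes $p\le M$, bounding the tail $p>M$ by a squarefree sieve on the two-variable discriminant polynomial $4A^3+27B^2$ via the geometric-sieve methods of \cite{geosieve}, and finally letting $M\to\infty$---is precisely the strategy used in \cite{BS}, so the proposal is correct and takes essentially the same route as the cited proof.
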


For any large family $F$ of elliptic curves over $\Q$, 
it follows from Proposition \ref{propprelimfinal} that
\begin{equation}\label{4avgeval}
\displaystyle{\lim_{X\to\infty}\displaystyle\frac{\displaystyle
    \sum_{\substack{E\in
        F\\H'(E)<X}}\#\{\sigma\in S_4(E):\sigma^2\neq 1\}}{\displaystyle\sum_{\substack{E\in
        F\\H'(E)<X}}1}}=\lim_{X\to\infty}\displaystyle\frac{N_{\phi}(V_\Z;X)}{N(F;X)}.
\end{equation}
Proposition \ref{propacceptable} states that $\phi$ is
acceptable. Thus, the right hand side of (\ref{4avgeval})  
can be evaluated using Theorems~\ref{thsqsieve} and
\ref{thellipcurvecount}:
\begin{equation}\label{eqJcance}
\begin{array}{rcl}
\displaystyle\lim_{X\to\infty}\displaystyle\frac{N_{\phi}(V_\Z;X)}{N(F;X)}&=&\displaystyle\lim_{X\to\infty}\displaystyle\frac{\frac14|\J|\cdot\Vol(G_\Z\backslash G_\R)
\Vol(\Inv_\infty(F;X))\displaystyle\prod_p\int_{V_{\Z_p}}\phi_p(x)dx}{\Vol(\Inv_\infty(F;X))\displaystyle\prod_p\Vol(\Inv_p(F))}\\[.4in]
&=&\displaystyle\frac{|\J|\cdot\Vol(G_\Z\backslash G_\R)\displaystyle\prod_p\bigl(|\J|_p\cdot\Vol(G_{\Z_p})\cdot\Vol(\Inv_p(F))\bigr)}{\displaystyle\prod_p\Vol(\Inv_p(F))},
\end{array}
\end{equation}
where the second equality follows from Proposition \ref{propmasseval}. Since
$\Vol(G_{\Z_p})\prod_p\Vol(G_{\Z_p})$ is equal to the Tamagawa number
of $G_\Q$ which is $4$ (see \cite{Langlands}), we obtain that
\begin{equation}\label{eqdone}
\displaystyle{\lim_{X\to\infty}\displaystyle\frac{\displaystyle
    \sum_{\substack{E\in
        F\\H'(E)<X}}\#\{\sigma\in S_4(E):\sigma^2\neq 1\}}{\displaystyle\sum_{\substack{E\in
        F\\H'(E)<X}}1}}=4.
\end{equation}

Now, for any elliptic curve $E$ over $\Q$, the short exact sequence
$$
0\to E[2]\to E[4]\to E[2]\to 0
$$
yields the long exact sequence
$$
0\to E[2](\Q)\to E[4](\Q)\to E[2](\Q)\to H^1(\Q,E[2])\to H^1(\Q,E[4]).
$$
  Therefore, if $E$ has no nontrivial rational $2$-torsion points, then
the group $H^1(\Q,E[2])$ injects into $H^1(\Q,E[4])$. This implies
that $S_2(E)$ injects into $S_4(E)$, and thus 
$$\#S_4(E)=\#\{\sigma\in S_4(E):\sigma^2\neq 1\}+\#S_2(E).$$

The number of elliptic curves over $\Q$ having nontrivial rational
$2$-torsion and height less than $X$ is negligible, i.e., is
$o(X^{5/6})$. That the sum of the sizes of the $4$-Selmer groups of
such elliptic curves is negligible follows from
Proposition~\ref{propstrongnottot}.  Since we have shown in
\cite[Theorem~3.1]{BS} that the average size of the $2$-Selmer group of
elliptic curves in any large family $F$ is equal to $3$, we obtain from
(\ref{eqdone}) that
\begin{equation*}
  \displaystyle{\lim_{X\to\infty}\displaystyle\frac{\displaystyle
    \sum_{\substack{E\in
        F\\H'(E)<X}}\#S_4(E)}{\displaystyle\sum_{\substack{E\in
        F\\H'(E)<X}}1}}=4+3=7.
\end{equation*}
This concludes the proof of Theorem \ref{ellipall}
(and hence also of
Theorems \ref{mainellip} and \ref{ellipcong}).

\vspace{.1in} Finally, to obtain Theorem \ref{4lift}, we note that for
an elliptic curve $E$ over $\Q$ with no rational 2-torsion, if the
$4$-Selmer group $S_4(E)$ is isomorphic to
$(\Z/4\Z)^a\times(\Z/2\Z)^b$, then the $2$-Selmer group $S_2(E)$ is
isomorphic to $(\Z/2\Z)^{a+b}$; the number of 2-Selmer elements that
are not in the image of the $\times 2$ map from $S_4(E)$ to $S_2(E)$
is thus $2^{a+b}-2^a$ in this case.  To prove Theorem~\ref{4lift}, we
wish to determine a lower bound on the liminf of the average of
$2^{a+b}-2^a$ over all elliptic curves $E$ over~$\Q$ (having trivial
rational 2-torsion), when these elliptic curves are 
ordered by height.  Equivalently, we wish to
determine an upper bound on the limsup of the average size of $2^a$.

We have proven that the average number of order 4 elements in the
4-Selmer groups of these elliptic curves is 4, i.e., the average size
of $(4^a-2^a)2^b$ is 4.  It follows that the limsup of the average
size of $4^a-2^a$ is at most 4.  Since $5\cdot 2^a-8\leq 4^a-2^a$ for
all integers $a>0$, we conclude that the limsup of the average size of
$2^a$ is at most $12/5$.  Hence the liminf of the average
size of $2^{a+b}-2^a$ is at least $3-12/5=3/5$; this completes the
proof of Theorem~\ref{4lift}. (We note that the proof also naturally yields
a distribution of $2$- and $4$-Selmer groups---for which the 
average sizes of these groups are given by $3$
and $7$, respectively---that achieves the bound of 3/5; hence the bound 
of $3/5$ in Theorem~\ref{4lift} is in fact the best possible given these two
constraints.)  

As a consequence, we see that a proportion of at least $(3/5)/3=1/5$
of 2-Selmer elements of elliptic curves $E$ over $\Q$, when ordered
by height, do not lift to 4-Selmer elements; i.e., we have proven that
at least a fifth of
all 2-Selmer elements yield
nontrivial 2-torsion elements in the corresponding Tate--Shafarevich groups.

\subsection*{Acknowledgments}

We thank John Cremona, Christophe Delaunay, Tom Fisher, Wei Ho, Bjorn
Poonen, Michael Stoll, Jerry Wang, Kevin Wilson, and Melanie Wood for
helpful conversations. The first author was partially supported
by NSF Grant~DMS-1001828.

\end{document}